\title[Asymptotic normality of fringe subtrees]
{Asymptotic normality of fringe subtrees
and additive functionals in conditioned Galton--Watson trees}
\date{4 December, 2013}
\newcommand\urladdrx[1]{{\urladdr{\def~{{\tiny$\sim$}}#1}}}
\author{Svante Janson}
\thanks{Partly supported by the Knut and Alice Wallenberg Foundation}
\address{Department of Mathematics, Uppsala University, PO Box 480,
SE-751~06 Uppsala, Sweden}
\email{svante.janson@math.uu.se}
\numberwithin{equation}{section}
\renewcommand\le{\leqslant}
\renewcommand\ge{\geqslant}
\newtheorem{theorem}{Theorem}[section]
\newtheorem{lemma}[theorem]{Lemma}
\newtheorem{corollary}[theorem]{Corollary}
\theoremstyle{definition}
\newtheorem{example}[theorem]{Example}
\newtheorem{remark}[theorem]{Remark}
\theoremstyle{remark}
\newenvironment{romenumerate}[1][0pt]{
\addtolength{\leftmargini}{#1}\begin{enumerate}
 }{\end{enumerate}}
\newcounter{oldenumi}
{\setcounter{oldenumi}{\value{enumi}}
\begin{romenumerate} \setcounter{enumi}{\value{oldenumi}}}
{\end{romenumerate}}
\newcounter{thmenumerate}
\newcounter{romxenumerate}   
\newcounter{xenumerate}   
\newenvironment{xenumerate}
{\begin{list}
    {\upshape(\roman{xenumerate})}
    {\setlength{\leftmargin}{0pt}
     \setlength{\rightmargin}{0pt}
     \setlength{\labelwidth}{0pt}
     \setlength{\itemindent}{\labelsep}
     \setlength{\topsep}{0pt}
     \usecounter{xenumerate}} }
  {\end{list}}
\newcommand\pfitem[1]{\par(#1):}
\newcommand\pfcase[2]{\smallskip\noindent\emph{Case #1\textup: #2} }
\newcommand\pfcasexx[1]{\smallskip\noindent\emph{#1\textup:}}
\newcounter{jeppe}
\newcommand\pfcasex[2]{\stepcounter{jeppe}%
  \pfcase{#1\textup{(\roman{jeppe})}}{#2}}
\newcommand{\refT}[1]{Theorem~\ref{#1}}
\newcommand{\refC}[1]{Corollary~\ref{#1}}
\newcommand{\refL}[1]{Lemma~\ref{#1}}
\newcommand{\refR}[1]{Remark~\ref{#1}}
\newcommand{\refS}[1]{Section~\ref{#1}}
\newcommand{\refE}[1]{Example~\ref{#1}}
\xdef\klockan{\the\count1.0\the\count255}
\xdef\klockan{\the\count1.\the\count255}\fi
\newcommand\nopf{\qed}   
\DeclareMathOperator*{\sumy}{\sum\nolimits^{\prime}}
\newcommand{\sumki}{\sum_{k=1}^\infty}
\newcommand{\sumni}{\sum_{n=1}^\infty}
\newcommand{\sumin}{\sum_{i=1}^n}
\newcommand{\sumim}{\sum_{i=1}^m}
\newcommand{\sumkn}{\sum_{k=1}^n}
\newcommand\set[1]{\ensuremath{\{#1\}}}
\newcommand\xpar[1]{(#1)}
\newcommand\bigpar[1]{\bigl(#1\bigr)}
\newcommand\Bigpar[1]{\Bigl(#1\Bigr)}
\newcommand\biggpar[1]{\biggl(#1\biggr)}
\newcommand\lrpar[1]{\left(#1\right)}
\newcommand\xcpar[1]{\{#1\}}
\newcommand\bigabs[1]{\bigl|#1\bigr|}
\newcommand\Bigabs[1]{\Bigl|#1\Bigr|}
\newcommand\lrabs[1]{\left|#1\right|}
\def\rompar(#1){\textup(#1\textup)}    
\newcommand\xfrac[2]{#1/#2}
\newcommand\parfrac[2]{\lrpar{\frac{#1}{#2}}}
\newcommand\Bigparfrac[2]{\Bigpar{\frac{#1}{#2}}}
\newcommand\biggparfrac[2]{\biggpar{\frac{#1}{#2}}}
\def\xexp(#1){e^{#1}}
\newcommand\floor[1]{\lfloor#1\rfloor}
\newcommand\setn{\set{1,\dots,n}}
\newcommand\ntoo{\ensuremath{{n\to\infty}}}
\newcommand\Ntoo{\ensuremath{{N\to\infty}}}
\newcommand\ktoo{\ensuremath{{k\to\infty}}}
\newcommand\punkt[1]{\if.#1\else.\spacefactor1000\fi{#1}}
\newcommand\iid{i.i.d\punkt}    
\newcommand\ie{i.e\punkt}
\newcommand\eg{e.g\punkt}
\newcommand\cf{cf\punkt}
\newcommand{\as}{a.s\punkt}
\newcommand\ii{\mathrm{i}}
\newcommand{\tend}{\longrightarrow}
\newcommand\dto{\overset{\mathrm{d}}{\tend}}
\newcommand\pto{\overset{\mathrm{p}}{\tend}}
\newcommand\eqd{\overset{\mathrm{d}}{=}}
\newcommand\bbR{\mathbb R}
\newcommand\bbN{\mathbb N}
\newcommand\bbZ{\mathbb Z}
\newcounter{CC}
\newcommand{\CC}{\stepcounter{CC}\CCx} 
\newcommand{\CCx}{C_{\arabic{CC}}}     
\newcommand{\CCreset}{\setcounter{CC}0} 
\newcounter{cc}
\newcommand{\cc}{\stepcounter{cc}\ccx} 
\newcommand{\ccx}{c_{\arabic{cc}}}     
\newcommand{\ccdef}[1]{\xdef#1{\ccx}}     
\newcommand{\ccreset}{\setcounter{cc}0} 
\newcounter{BB}
\newcommand{\BB}{\stepcounter{BB}\BBx} 
\newcommand{\BBx}{B_{\arabic{BB}}}     
\newcommand{\BBdef}[1]{\xdef#1{\BBx}}     
\newcommand\E{\operatorname{\mathbb E{}}}
\newcommand\PP{\operatorname{\mathbb P{}}}
\newcommand\Var{\operatorname{Var}}
\newcommand\Cov{\operatorname{Cov}}
\newcommand\Po{\operatorname{Po}}
\newcommand\Bi{\operatorname{Bi}}
\newcommand\Bin{\operatorname{Bin}}
\newcommand\Ge{\operatorname{Ge}}
\newcommand\sign{\operatorname{sign}}
\newcommand\ga{\alpha}
\newcommand\gb{\beta}
\newcommand\gd{\delta}
\newcommand\gD{\Delta}
\newcommand\gf{\varphi}
\newcommand\gam{\gamma}
\newcommand\gG{\Gamma}
\newcommand\gk{\varkappa}
\newcommand\gs{\sigma}
\newcommand\gss{\sigma^2}
\newcommand\eps{\varepsilon}
\renewcommand\phi{\xxx}  
\newcommand\cF{\mathcal F}
\newcommand\cL{{\mathcal L}}
\newcommand\cT{{\mathcal T}}
\newcommand\ett[1]{\boldsymbol1\xcpar{#1}}
\newcommand\qw{^{-1}}
\newcommand\qww{^{-2}}
\newcommand\qq{^{1/2}}
\newcommand\qqw{^{-1/2}}
\newcommand\qqc{^{3/2}}
\newcommand\qqcw{^{-3/2}}
\renewcommand{\=}{:=}
\newcommand\intoo{\int_0^\infty}
\newcommand\intoooo{\int_{-\infty}^\infty}
\newcommand\dd{\,\mathrm{d}}
\newcommand{\chf}{characteristic function}
\newcommand\lhs{left-hand side}
\newcommand\rhs{right-hand side}
\newcommand\Tx{T_*}
\newcommand\st{\mathfrak T}
\newcommand\stn{\st_n}
\newcommand\sgrt{simply generated random tree}
\newcommand\GW{Galton--Watson}
\newcommand\GWt{\GW{} tree}
\newcommand\cGWt{conditioned \GW{} tree}
\newcommand\GWp{\GW{} process}
\newcommand\ctnx{\cT_{n,*}}
\newcommand\ct{\cT}
\newcommand\ctn{\cT_n}
\newcommand\ctk{\cT_k}
\newcommand\hct{\hat\cT}
\newcommand\ddn{(d_1,\dots,d_n)}
\newcommand\ddl{(d_1,\dots,d_\ell)}
\newcommand\ddil{(d_i,\dots,d_{i+k-1})}
\newcommand\ds{degree sequence}
\newcommand\xixx[1]{\xi_1,\dots,\xi_{#1}}
\newcommand\xix[1]{(\xixx{#1})}
\newcommand\xik{\xix k}
\newcommand\xikx{\xixx k}
\newcommand\xijmx{\xi_{j+1},\dots,\xi_{j+m}}
\newcommand\xijm{(\xijmx)}
\newcommand\tgf{\tilde\varphi}
\newcommand\txi{\tilde\xi}
\newcommand\intpipi{\frac1{2\pi}\int_{-\pi}^{\pi}}
\newcommand\intpipix{\int_{-\pi}^{\pi}}
\newcommand\subsectionx{\subsection{\!}}
\newcommand\ntt{n_{T'}(T)}
\newcommand\gbx{\gb^*}
\newcommand\NN{^{(N)}}
\newcommand\MM{^{(M)}}
\newcommand\MMi{^{(M-1)}}
\newcommand\au{s}
\newcommand\bv{b}
\newcommand\tY{\tilde Y}
\newcommand\gamN{(\gam\NN)}
\newcommand\gamNx{\gam\NN}
\newcommand\ptmt{\pi\MM_T}
\newcommand\hptmt{\hat\pi\MM_T}
\newcommand\tg{\tilde g}
\newcommand\xas{\text{\quad a.s.}}
\newcommand\CS{Cauchy--Schwarz}
\newcommand\CSineq{\CS{} inequality}
\begin{document}

\subjclass[2010]{60C05; 05C05}



\begin{abstract} 
We consider conditioned Galton--Watson trees and show asymptotic normality
of additive functionals that are defined by toll functions that are not too
large. This includes, as a special case, asymptotic normality of the number
of fringe subtrees isomorphic to any given tree, and joint asymptotic
normality for several such subtree counts. Another example is the number of
protected nodes. The offspring distribution defining the random tree
is assumed to have expectation 1 and finite variance; no further moment
condition is assumed.
\end{abstract}

\maketitle

\section{Introduction}\label{S:intro}
All trees in this paper are \emph{rooted} and \emph{ordered} (= \emph{plane}).
(We assume that the trees are ordered, \ie, that the children of each node
are ordered, for technical convenience. In applications, the ordering is
often irrelevant, and we may then treat unordered trees too by using a
random labelling.) 
We consider in the present paper only finite trees 
(except $\hct$ in \refL{Lfloc}), and  
denote the size (or order), \ie{} the number of nodes, of a tree $T$ by $|T|$.
We let $\st$ denote the countable set of all   
ordered rooted trees (where we identify trees that are isomorphic
in the natural way, with an isomorphism preserving the root and the
orderings of children); 
let further
$\stn$ be the (finite) subset of all such trees of order $n$.   
(See further \eg{} \cite{Drmota} and \cite{SJ264}.)

Given a rooted tree $T$ and a node $v$ in $T$, let $T_v$ be the subtree of
$T$ rooted at $v$, \ie, the subtree consisting of $v$ and all its
descendents.
Such subtrees are called \emph{fringe subtrees}.
(By ``subtree'', we mean in the present paper always
a fringe subtree, except in \refE{EWagner}.)
We are interested in the collection \set{T_v} of all fringe subtrees of a given
tree $T$. 

One way to study this collection is to  consider
the  \emph{random fringe subtree}  $\Tx$,
which is the random rooted tree obtained by taking the subtree $T_v$ 
at a uniformly random node $v$ in $T$. This was
introduced and studied by \citet{Aldous-fringe}, both in general and for many
important examples.
We let, for $T,T'\in\st$, 
\begin{equation}\label{ntt}
  n_{T'}(T):=\bigabs{\set{v\in T':T_v=T'}},
\end{equation}
\ie, the number 
of subtrees of $T$ that are equal 
 (\ie,  isomorphic to)
to $T'$. Then the distribution of $\Tx$ is
given by
\begin{equation}\label{ptx}
  \PP(\Tx=T')=n_{T'}(T)/|T|, \qquad T'\in\st.
\end{equation}
Thus,  to study the distribution of $\Tx$ 
is equivalent to studying the numbers
$n_{T'}(T)$. 

A related point of view is to 
let $f$ be a functional of rooted trees, \ie,  a function $f:\st\to\bbR$,
and for a tree $T\in\st$ 
consider the sum
\begin{equation}\label{F}
  F(T)=F(T;f)\=\sum_{v\in T} f(T_v).
\end{equation}
Thus,
\begin{equation}\label{FE}
  F(T)/|T|=\E f(\Tx).
\end{equation}
One important example of this is to take $f(T)=\ett{T=T'}$, the indicator
function that $T$ equals
some given tree $T'\in\st$; then $F(T)=n_{T'}(T)$ and \eqref{FE} reduces to
\eqref{ptx}. 
Conversely, for any $f$,
\begin{equation}\label{Fnt}
  F(T)=\sum_{T'\in\st}f(T') n_{T'}(T);
\end{equation}
hence any $F(T)$ can be written as a linear combination of the 
subtree counts $\ntt$, so the two points of views are essentially equivalent.

\begin{remark}
Functionals $F$ that can be written as \eqref{F} for some $f$ are called
\emph{additive functionals}.
  The definition \eqref{F} can also be written recusively as
  \begin{equation}\label{toll}
	F(T)=f(T)+\sum_{i=1}^d F(T_i),
  \end{equation}
where $T_1,\dots,T_d$ are the branches 
(\ie, the subtrees rooted at the children of the root) 
of $T$.
In this context, $f(T)$ is often called a \emph{toll function}.
(One often considers toll functions that depend only on the size $|T|$ of
$T$, but that is 
not always the case. We emphasise that we allow more general functionals $f$.)
\end{remark}

Note that 
when $T$ is a random tree, as it was in \cite{Aldous-fringe} 
and will be in the present paper,
$F(T)$ is a random variable.
In particular,  $\ntt$ is a random variable for each $T'\in\st$, and thus
the distribution of $\Tx$, which is given by \eqref{ptx}, is a random
probability distribution on $\st$. Note that 
\eqref{ptx} now reads 
\begin{equation}\label{ptxx}
  \PP\bigpar{\Tx=T'\mid T}=n_{T'}(T)/|T|
\end{equation}
and that similarly
\eqref{FE} then has to be
replaced by 
\begin{equation}\label{FEx}
  F(T)/|T|=\E \bigpar{f(\Tx)\mid T}.
\end{equation}

\begin{remark}
This is the  \emph{quenched} version of the fringe subtree
$\Tx$, where
we first select a realization of the random tree $T$, and then
fix this realization and choose $v\in T$ uniformly at
random, yielding a fringe subtree $\Tx$ with a distribution depending on
$T$;
this is thus a random distribution, as said above.
The alternative is 
the \emph{annealed} version where we take a random tree $T$ and a uniformly
random node $v$ in it as a  combined random event; this yields a
random fringe 
subtree with a distribution that is the expectation of the random
distribution \eqref{ptx} in the quenched version. 
When $|T|$ is fixed (as in the cases we study in the present paper),
the annealed version thus corresponds
to considering only the expectation $\E F(T)=|T|\E f(\Tx)$ of the sum \eqref{F},
or equivalently $\E f(\Tx)$,
while the quenched version corresponds to studying the conditional
expectation \eqref{FEx}.
\end{remark}

The random trees that we consider in this paper are 
\cGWt{s}, see \refS{Strees} for definition and notation.
(Related results for some other random trees are given by
\citet{FillKapur-mary,FillKapur-mary2} 
($m$-ary search trees under different models)
and
\citet{HJ} (random binary search trees and random recursive trees).)
The \GW{} trees are defined using an offspring distribution; we let $\xi$ denote
a random variable with this distribution and we assume throughout the paper
that the mean $\E\xi=1$ and (except in \refT{T0}) 
that the variance $\gss:=\Var\xi$ is finite (and non-zero).
We recall the well-known fact that several standard examples of random trees
can de defined in this way, for example
uniform random ordered trees ($\xi\sim\Ge(1/2)$, $\gss=2$),
uniform random labelled trees ($\xi\sim\Po(1)$, $\gss=1$) and
uniform random binary trees ($\xi\sim\Bi(2,1/2)$, $\gss=1/2$), 
see \eg{} \citet{AldousII}, \citet{Devroye}, \citet{Drmota}, \citet{SJ264}.

The results in \citet{Aldous-fringe} focus on convergence (in probability),
as $|T|\to\infty$, 
of the fringe subtree distribution for suitable classes of random trees $T$,
which by \eqref{FEx} is equivalent to convergence 
of $F(T)/|T|$ 
or $\E F(T)/|T|$
for suitable functionals $f$.
For the
\cGWt{s} studied here, this is stated in the 
following theorem. 
Part (i) was proved by \citet{Aldous-fringe}, assuming $\Var\xi<\infty$ as 
we assume in the rest of the paper, and extended to more general $\xi$ by 
\citet{BenniesK}, and further by \citet{SJ264};
the sharper version (ii) is proved in \cite[Theorem 7.12]{SJ264}.

\begin{theorem}[Aldous, et al.]\label{T0}
Let $\ctn$ be a \cGWt{} with $n$ nodes, defined by an offspring distribution
$\xi$ with $\E\xi=1$,
and let $\cT$ be the corresponding unconditioned \GWt.
Then,  as \ntoo:
  \begin{romenumerate} [-10pt]
  \item ({Annealed version}.)
The fringe subtree $\ctnx$ converges in distribution to the \GWt{} $\cT$.
I.e., for every fixed tree $T$, 
\begin{equation}\label{t2a}
\frac{\E n_T(\ctn)}n=  \PP(\ctnx=T) \to \PP(\cT=T).
\end{equation}
Equivalently, for any bounded functional $f$ on $\st$,
\begin{equation}\label{t2af}
\E \frac{F(\ctn)}{n} = \E f(\ctnx)\to \E f(\cT).
\end{equation}

  \item ({Quenched version}.)
The conditional distributions $\cL(\ctnx\mid\ctn)$ converge to the
distribution of $\cT$ in probability.
I.e., for every fixed tree $T$, 
\begin{equation}\label{t2q}
\frac{n_T(\ctn)}n=  
  \PP(\ctnx=T\mid\ctn) \pto \PP(\cT=T).
\end{equation}
Equivalently, for any bounded functional $f$ on $\st$,
\begin{equation}\label{t2qf}
  \frac{F(\ctn)}{n} = \E f\bigpar{\ctnx\mid \ctn}\pto \E f(\cT).
\end{equation}
\end{romenumerate} 
\qed
\end{theorem}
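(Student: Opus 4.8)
The plan is to prove both parts first for a single fixed target tree, i.e.\ for $f=\ett{\cdot=T'}$ (equivalently for the subtree count $n_{T'}(\ctn)$), and then to bootstrap to general bounded $f$. For the single-tree case I would reduce the first and second moments of $n_{T'}(\ctn)$, by a ``cut and paste'' identity, to the first and second factorial moments of the leaf count of an \emph{unconditioned} \GWt, and then invoke Otter's/Dwass's cycle-lemma formula together with the local central limit theorem.

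\emph{Reduction to a single tree.} By linearity it suffices to prove the assertions for $f=\ett{\cdot=T'}$, $T'\in\st$, since then they hold for every finitely supported $f$. Given a bounded $f$ with $\abs f\le M$ and an $N\ge1$, split $f=f_N+g_N$ with $f_N=f\ett{\abs\cdot\le N}$. Then $\abs{F(\ctn;g_N)}\le M\,\#\set{v\in\ctn:\abs{(\ctn)_v}>N}$, so $\E\abs{F(\ctn;g_N)}/n\le M\,\PP(\abs{\ctnx}>N)$; the already-proved finitely supported case gives $\PP(\abs{\ctnx}>N)\to\PP(\abs\cT>N)$ as \ntoo{} (a finite sum of indicator probabilities), and $\PP(\abs\cT<\infty)=1$ because $\E\xi=1$, so $\PP(\abs\cT>N)\downto0$. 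Combined with $\abs{\E g_N(\cT)}\le M\PP(\abs\cT>N)$ this controls the error of replacing $f$ by $f_N$: directly for the annealed statement, and via Markov's inequality applied to $F(\ctn;g_N)/n$ for the quenched one.

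\emph{Cut and paste.} Write $k=\abs{T'}$, $p_j=\PP(\xi=j)$, $w=\PP(\cT=T')=\prod_{v\in T'}p_{d_v}$ with $d_v$ the out-degree of $v$, and let $L(\cT)$ be the number of leaves of $\cT$. Grafting $T'$ at a leaf $v$ of a tree $S$ (identifying the root of $T'$ with $v$) yields a tree of size $\abs S+k-1$ with a marked fringe copy of $T'$, and this is a bijection between $\set{(T,v):\abs T=m,\ T_v=T'}$ and $\set{(S,v):\abs S=m-k+1,\ v\text{ a leaf of }S}$, along which the \GW{} weight changes only by the factor $w/p_0$ (the interior of $T'$ is inserted and $v$ switches from degree $0$ to the root degree of $T'$). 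Summing,
\begin{equation*}
  \E\bigsqpar{n_{T'}(\cT)\ett{\abs\cT=m}}=\frac{w}{p_0}\,\E\bigsqpar{L(\cT)\ett{\abs\cT=m-k+1}}.
\end{equation*}
Since two distinct fringe copies of $T'$ in a tree have incomparable roots — one copy cannot be a proper fringe subtree of another copy of the same size — they are vertex-disjoint, and the same argument applied to ordered pairs of copies gives
\begin{equation*}
  \E\bigsqpar{n_{T'}(\cT)\bigpar{n_{T'}(\cT)-1}\ett{\abs\cT=m}}=\frac{w^2}{p_0^2}\,\E\bigsqpar{L(\cT)\bigpar{L(\cT)-1}\ett{\abs\cT=m-2k+2}}.
\end{equation*}

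\emph{Analytic input and conclusion.} The leaf count is invariant under cyclic rotation of the depth-first out-degree sequence, so the cycle lemma (Dwass's formula) gives, with $S_j=\xi_1+\dots+\xi_j$ a sum of \iid{} copies of $\xi$, $\PP(\abs\cT=m)=\tfrac1m\PP(S_m=m-1)$, $\E[L(\cT)\ett{\abs\cT=m}]=p_0\PP(S_{m-1}=m-1)$ and $\E[L(\cT)(L(\cT)-1)\ett{\abs\cT=m}]=(m-1)p_0^2\PP(S_{m-2}=m-1)$. As $\E\xi=1$ and $\Var\xi=\gss<\infty$, the local limit theorem gives $\PP(S_j=j+a)\sim h/(\gs\sqrt{2\pi j})$ uniformly for bounded $a$, where $h$ is the span of $\xi-1$. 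Dividing the two cut-and-paste identities by $\PP(\abs\cT=n)$ yields $\E[n_{T'}(\ctn)]\sim wn$, which is \eqref{t2a} (equivalently \eqref{t2af}), and $\E[n_{T'}(\ctn)(n_{T'}(\ctn)-1)]\sim w^2n^2$; since also $\E[n_{T'}(\ctn)]^2\sim w^2n^2$, we get $\Var\bigpar{n_{T'}(\ctn)}=o(n^2)$, hence $n_{T'}(\ctn)/n\pto w$ by Chebyshev's inequality, which is \eqref{t2q} (equivalently \eqref{t2qf}). (Here $n$ ranges over the residues for which $\PP(\abs\cT=n)>0$, as is implicit throughout.) An alternative to the last step works entirely with $y(x)=\E x^{\abs\cT}$, which solves $y=x\Phi(y)$ with $\Phi(x)=\E x^\xi$: the cut-and-paste becomes $\E[x^{\abs\cT}n_{T'}(\cT)]=w\,x^{k+1}y'(x)/y(x)$, and one extracts coefficients from the square-root-type singularity of $y$ at its radius of convergence by a transfer theorem.

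The main obstacle I anticipate is the final moment bookkeeping: the leading $w^2n^2$ contributions to $\E[n_{T'}(\ctn)(n_{T'}(\ctn)-1)]$ and to $\E[n_{T'}(\ctn)]^2$ must cancel, leaving a genuinely smaller variance, and under only a finite-second-moment hypothesis there is no quantitative error term in the local limit theorem — so the cancellation has to be extracted by comparing ratios $\PP(S_{n-ak}=n-ak+b)/\PP(S_n=n-1)\to1$ rather than by subtracting asymptotic expansions, while also tracking the index shifts $m\mapsto m-k+1$, $m-2k+2$ and the span $h$ through every step.
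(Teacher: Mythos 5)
Your argument is correct under the standing assumption $0<\gss=\Var\xi<\infty$, and it takes a genuinely different route from the paper, which gives no proof of \refT{T0} at all but cites \cite{Aldous-fringe}, \cite{BenniesK} and \cite[Theorem 7.12]{SJ264}. Your grafting bijection is a clean combinatorial substitute for the machinery the paper develops later for the stronger \refT{T1}: there the first and second moments of $n_{T'}(\ctn)$ are obtained from the representation of the degree sequence of $\ctn$ as a cyclically shifted \iid{} sequence conditioned on its sum (\refL{Lefkn} and \refL{Lcov}), which for $f=\ett{\cdot=T'}$ yields exactly your formulas $\E n_{T'}(\ctn)=wn\,\PP(S_{n-k}=n-k)/\PP(S_n=n-1)$ and the analogous second factorial moment. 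Your observation that two distinct fringe copies of a fixed $T'$ are automatically vertex-disjoint is precisely what makes the pair bijection work, and your closing worry about cancellation is unfounded for this theorem: since both probability ratios tend to $1$, one gets $\Var n_{T'}(\ctn)=o(n^2)$ for free, which is all Chebyshev needs; the delicate $\Theta(n)$ variance asymptotics are required only for the central limit theorem of \refT{T1}. The truncation step reducing bounded $f$ to finitely supported $f$ via $\PP(|\ctnx|>N)\to\PP(|\cT|>N)$, together with Markov's inequality for the quenched version, is also sound and is essentially what the paper's remark following the theorem alludes to.

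The one genuine shortfall is generality: \refT{T0} is stated assuming only $\E\xi=1$ (the introduction explicitly exempts this theorem from the finite-variance hypothesis), while your proof leans on the local limit theorem in the form $\PP(S_j=j+a)\sim h/(\gs\sqrt{2\pi j}\,)$ and therefore needs $\gss<\infty$. Without a second moment one must argue differently; that is the content of the extensions in \cite{BenniesK} and \cite[Theorem 7.12]{SJ264}. Since the rest of the paper assumes $\gss<\infty$ anyway, this restricts only the scope of the statement you have proved, not its use elsewhere in the paper.
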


\begin{remark}
  The statement in \cite[Theorem 7.12]{SJ264} uses \eqref{t2a} and
  \eqref{t2q}, here expanded using  \eqref{ptxx}.
The equivalences with \eqref{t2af} and \eqref{t2qf} follow by standard
properties of convergence in distribution, see \eg{} 
\cite[Theorem 2.1 and Section 4]{Billingsley}.
(Note that the set of finite ordered trees is a
countable discrete set, which simplifies the situation and \eg{} justifies
that it is enough to consider point probabilities in \eqref{t2a} and
\eqref{t2q}. To show \eqref{t2qf} it may be convenient to use 
the Skorohod representation theorem \cite[Theorem 4.30]{Kallenberg} and
assume that \eqref{t2q} holds \as{} for every $T$.)

The result is easily extended to include also unbounded $f$ with suitable
growth conditions, see for example \refT{T1}\ref{T1e},\ref{T1v} 
and \refR{Rp} below.
\end{remark}

\refT{T0} is a law of large numbers for $F(\ctn)$.
In the present paper we take the next step and
study the fluctuations of $F(\ctn)$; we prove a central
limit theorem, \ie, 
asymptotic normality of $F(\ctn)$ under suitable assumptions. 
This includes, as a special case, (joint) normal convergence of the subgraph
counts $\ntt$, see \refC{C1}.
Our main result is the following.
(The proof of this and the following results is given in \refS{Spf}.)

\begin{theorem}
  \label{T1}
Let $\ctn$ be a \cGWt{} of order $n$ with offspring distribution $\xi$, 
where $\E\xi=1$ and $0<\gss\=\Var\xi <\infty$, and let $\cT$ be the
corresponding unconditioned \GWt.
Suppose that $f:\st\to\bbR$ is a functional of rooted trees
such that $\E|f(\cT)|<\infty$,
and let 
$\mu:=\E f(\cT)$.
\begin{romenumerate}[-10pt]
\item \label{T1e}
If\/ $\E f(\ctn)\to0$ as \ntoo, 
then
\begin{equation}\label{t1e}
\E F(\ctn) 
=n\mu+o\bigpar{\sqrt n}.
\end{equation}

\item \label{T1v}
If  
\begin{align}
\E f(\ctn)^2&\to0 \label{t1v1}
\intertext{as \ntoo, and}
\label{t1v2}
\sumni \frac{\sqrt{\E (f(\ctn)^2)}}{n} &<\infty,	
  \end{align}
then
\begin{equation}\label{t1v}
  \Var F(\ctn) = n\gam^2+o(n)
\end{equation}
where 
\begin{equation}\label{gam}
  \gam^2 := 2\E\Bigpar{f(\cT)\bigpar{F(\cT)-|\cT|\mu}}
-\Var f(\cT) -\mu^2/\gss
\end{equation}
is finite; 
moreover,
\begin{equation}\label{t1n}
\frac{ F(\ctn) - n\mu}{\sqrt n} \dto N(0,\gam^2).
\end{equation}
\end{romenumerate}
\end{theorem}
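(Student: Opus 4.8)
The plan is to translate the statement into a central (and local) limit problem for the partial sum of a stationary, finite-range–dependent sequence conditioned on the value of an auxiliary lattice statistic, using the depth-first (Łukasiewicz) coding of trees. Let $\xi_1,\xi_2,\dots$ be \iid copies of $\xi$, extended to a sequence of period $n$, and put $S_k:=\sum_{i=1}^k(\xi_i-1)$. Reading the Łukasiewicz code starting from position $k$ produces a fringe subtree $\cT^{[k]}$ (the excursion of the relative walk before its first return below $S_{k-1}$); unconditionally each $\cT^{[k]}$ is distributed as $\cT$, so $Y_k:=f(\cT^{[k]})$ has $\E Y_k=\mu$. The standard coding together with the cycle lemma gives
\[
  F(\ctn)\eqd\Bigl(\sum_{k=1}^n Y_k \,\Big|\, S_n=-1\Bigr),
\]
because $\ctn$ is obtained by decoding the unique cyclic rotation of $(\xi_1,\dots,\xi_n)$, conditioned on $S_n=-1$, with nonnegative partial sums, and $\sum_k Y_k$ is rotation invariant. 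So it suffices to study this conditioned sum.

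First I would treat a \emph{finitely supported} toll function, $f(T)=0$ for $|T|>M$. Then $Y_k$ is a function of the window $(\xi_k,\dots,\xi_{k+M-1})$ (this window decides whether the excursion closes within $M$ steps, and if so which tree it is), so $\bigl((Y_k,\xi_k)\bigr)_k$ is a stationary $(M-1)$-dependent sequence. A classical central limit theorem for $m$-dependent sequences gives joint asymptotic normality of $n^{-1/2}\bigl(\sum_k(Y_k-\mu),\,S_n\bigr)$, with limiting covariance matrix $\Sigma$ whose entries are the absolutely convergent series $\Sigma_{11}=\sum_j\Cov(Y_0,Y_j)$, $\Sigma_{12}=\sum_j\Cov(Y_0,\xi_j-1)$, $\Sigma_{22}=\gss$; combined with the local limit theorem for $S_n$ (which uses exactly $\gss<\infty$), this yields $n^{-1/2}(F(\ctn)-n\mu)\dto N(0,\gam_M^2)$ with $\gam_M^2=\Sigma_{11}-\Sigma_{12}^2/\gss$, and analogous computations of the first two moments (exact identities together with the same local estimates) give $\E F(\ctn)=n\mu+o(\sqrt n)$ and $\Var F(\ctn)=n\gam_M^2+o(n)$. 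It then remains to recognise $\gam_M^2$: conditioning on the excursion $\cT^{[0]}$ and using $\sum(\xi_i-1)=-1$ over it gives $\Sigma_{12}=-\mu$ (producing the term $-\mu^2/\gss$), and splitting $\Sigma_{11}=\Var Y_0+2\sum_{j\ge1}\Cov(Y_0,Y_j)$ according to whether position $j$ lies inside $\cT^{[0]}$ (so $\cT^{[j]}$ is a fringe subtree of $\cT^{[0]}$, contributing $\E(f(\cT)F(\cT))$-type terms) or not rearranges $\Sigma_{11}$ into $2\E\bigl(f(\cT)(F(\cT)-|\cT|\mu)\bigr)-\Var f(\cT)$, which is \eqref{gam}.

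For general $f$ I would write $f=f_M+\bar f_M$ with $f_M(T)=f(T)\ett{|T|\le M}$, apply the above to $f_M$, and control the remainder $\bar F_M(\ctn)=\sum_{v:\,|(\ctn)_v|>M}f((\ctn)_v)$. This is where the hypotheses enter: applying part \ref{T1e} to $\bar f_M$ gives $\E\bar F_M(\ctn)=n\E\bar f_M(\cT)+o(\sqrt n)$ with $\E\bar f_M(\cT)\to0$, and \eqref{t1v1}--\eqref{t1v2} give a bound $\Var\bar F_M(\ctn)\le(\eps_M+o(1))n$ with $\eps_M\to0$ (the quantity $\E(f(\ctn)^2)$ controls the single ``whole-tree'' summand $Y_1=f(\ctn)$, and \eqref{t1v2} is precisely the summability making the tail of the relevant covariance series vanish). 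Hence $\gam_M^2\to\gam^2$ (so $\gam^2<\infty$), $n^{-1/2}\bar F_M(\ctn)\pto0$ in the appropriate double-limit sense, and the central limit theorem \eqref{t1n} together with \eqref{t1v} follows from the finitely supported case by the standard approximation argument for convergence in distribution (\cf\ \cite{Billingsley}). Part \ref{T1e} itself can be proved within the same framework, or from the generating-function identity $\E[F(\cT)x^{|\cT|}]=\bigl(xQ'(x)/Q(x)\bigr)\E[f(\cT)x^{|\cT|}]$ with $Q(x)=\E x^{|\cT|}$, analysing its square-root singularity at $x=1$: since $\E[f(\cT)x^{|\cT|}]\to\mu$ as $x\to1^-$, the leading behaviour matches $n\mu$, and the hypothesis $\E f(\ctn)\to0$ forces the next-order discrepancy to contribute only $o(\sqrt n)$.

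The step I expect to be hardest is the general-$f$ truncation, specifically establishing $\limsup_n n^{-1}\Var\bar F_M(\ctn)\to0$ as $M\to\infty$ from \eqref{t1v1}--\eqref{t1v2}: for $\bar f_M$ the sequence $(Y_k)$ has \emph{unbounded} range of dependence, so the $m$-dependent machinery does not apply, and one must instead obtain quantitative estimates -- via the local limit theorem and the excursion structure -- bounding $\Cov(Y_j,Y_k\mid S_n=-1)$ in terms of $\E f(\cT_\ell)^2$ for the excursion sizes $\ell$ involved, and then recognise \eqref{t1v2} as exactly the condition under which these estimates sum to $o(n)$. A secondary technical point is the careful ``inside/outside $\cT^{[0]}$'' bookkeeping matching $\Sigma_{11}$ with \eqref{gam}, and the sharpened local estimates (with $o(n^{-1})$-type errors) needed for the rate in part \ref{T1e}.
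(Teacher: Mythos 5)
Your architecture is the same as the paper's: the cyclic-lemma representation of $F(\ctn)$ as a sum of a stationary $M$-dependent sequence conditioned on $S_n=n-1$, the Le Cam--Holst device ($m$-dependent CLT for $(Y_k,\xi_k)$ plus the local limit theorem to remove the conditioning, with $\Sigma_{12}=-\mu$ producing the $-\mu^2/\gss$ correction) for finitely supported $f$, and a truncation $f=f^{(M)}+\bar f_M$ with a uniform variance bound on the remainder for general $f$. The identification of $\gamma_M^2$ with \eqref{gam} and the computation of the cross-covariance are exactly the paper's \eqref{gbx}--\eqref{rhox}.

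The genuine gap is the step you yourself flag as hardest: the bound $\limsup_n n^{-1}\Var \bar F_M(\ctn)\to 0$ as $M\to\infty$, which is the paper's \refT{Tvar<} and occupies most of its Section~\ref{Svar}. Your sketch (``bound the conditional covariances by $\E f(\ctk)^2$ and sum using \eqref{t1v2}'') does not go through directly, for two reasons. First, the conditional covariance of $F_k$ and $F_m$ contains a product-of-expectations term multiplied by $n-k-m+1\approx n$ times a difference of three local-limit probability ratios; under only $\E\xi^2<\infty$ the individual ratios carry $o(n^{-1/2})$ errors, so one must exploit cancellation inside that difference (the paper's \refL{Lv2}, proved by a two-variable Fourier argument) to see that it is $O(1/n)$ plus controllable terms --- a crude term-by-term bound gives $o(n)\cdot$(something not summable). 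Second, the resulting estimate is not closed in terms of $\E f(\ctk)^2$ alone: the paper must split $f$ into a part with $\E f_k(\ctk)=0$ for every $k$ and a part depending only on $|T|$; for the centered part the covariance formula collapses to a single term involving $\E\bigpar{f_k(\ctk)F(\ctk)}$, which is controlled by a Cauchy--Schwarz bound in terms of $\Var F(\ctk)$ itself, and one needs a bootstrap inequality $\gb_n^2\le C B\sup_{k\le n}\gb_k$ to deduce $\Var F(\ctn)=O(n)$. Neither the centered/size-only decomposition, nor the bootstrap, nor the cancellation lemmas appear in your outline, and without them the remainder estimate is an assertion rather than a proof. (A minor additional caveat: your alternative generating-function route for part \ref{T1e} would need analyticity beyond what $\E\xi^2<\infty$ provides; your primary route for \ref{T1e}, exact identities plus refined local estimates, is the paper's and does work, again via cancellation in \refL{L2}.)
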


By \eqref{t1e}, we may replace $n\mu$ by the exact mean $\E F(\ctn)$
in \eqref{t1n}.

\begin{remark}\label{RT1a}  
 By \eqref{pett}, the condition $\E|f(\cT)|<\infty$ is equivalent to
$\sum_n n\qqcw\E|f(\ctn)|<\infty$; in particular, this holds if
$\E|f(\ctn)|=O(1)$, and thus if \eqref{t1v1} holds.
(It is also implied by \eqref{t1v2}.)
\end{remark}

\begin{remark}\label{Rgam0}
  It follows from \eqref{t1v} that $\gam^2\ge0$. We do not know whether
  $\gam^2=0$ is possible except in trivial cases when $F(\ctn)$ is
  deterministic for all $n$.
\end{remark}

Special cases of \refT{T1} have been proved before, by various methods.
A simple example is the number of leaves in $\ctn$, shown to be normal by
\citet{Kolchin}, see \refE{Eleaves}. 
(See also \citet[Remark 7.5.3]{Aldous-fringe}.)
\citet{Wagner} considered random labelled trees (the case $\xi\sim\Po(1)$)
and showed \refT{T1} (and convergence of all moments)
for this case, assuming further that $f$ is bounded and $\E|f(\ctn)|=O(c^n)$
for some $c<1$ (a stronger assumption that our \eqref{t1v1}--\eqref{t1v2}).

\refT{T1} is stated for a single functional $F$, but joint convergence for
several different $F$ (each satisfying the conditions in the theorem)
follows immediately by the Cram\'er--Wold device (\ie, by considering linear
combinations); the asymptotic covariances follow from \eqref{gam} by
polarization in the usual way (\ie, using \eg{}
$\Cov(X,Y)=\frac14(\Var(X+Y)-\Var(X-Y))$).
One example is the following corollary for the subtree counts \eqref{ntt};
by \eqref{ptxx}, this corollary shows that the 
conditional distribution $\cL(\ctnx\mid\ctn)$
of the fringe subtree $\ctnx$ of $\ctn$ has asymptotically Gaussian
fluctuations around the limit distribution given by \refT{T0}.

\begin{corollary}\label{C1}
 The subtree counts $n_T(\ctn)$, $T\in\st$, are asymptotically
jointly normal. 
More precisely, let $\pi_T:=\PP(\cT=T)$,
 \begin{align}
  \gam_{T,T}&:=
\pi_{T}
 -\bigpar{2|T|-1+\gs\qww}\pi_{T}^2, \label{gamT}
\end{align}
and, for $T_1\neq T_2$,
\begin{align}\label{gamTT}
  \gam_{T_1,T_2}&:=
 n_{T_2}(T_1)\pi_{T_1}+ n_{T_1}(T_2) \pi_{T_2}
 -\bigpar{|T_1|+|T_2|-1+\gs\qww}\pi_{T_1}\pi_{T_2}.
 \end{align}
Then, for any trees $T,T_1,T_2\in\st$,
\begin{align}
  \E n_T(\ctn)& = n\pi_T +o\bigpar{\sqrt n},\label{eT}\\
  \Var n_T(\ctn)& = n\gam_{T,T} +o(n), \label{varT}\\
 \Cov \bigpar{n_{T_1}(\ctn),n_{T_2}(\ctn)}& = n\gam_{T_1,T_2} +o(n),
   \label{covT}\\
\frac{n_T(\ctn)-n\pi_T}{\sqrt n} &\dto Z_T, \label{asnT}
\end{align}
the latter jointly for all $T\in\st$, where $Z_T$ are jointly normal with
mean $\E Z_T=0$ and covariances
$\Cov\bigpar{Z_{T_1},Z_{T_2}}=\gam_{T_1,T_2}$.
\end{corollary}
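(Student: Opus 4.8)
\emph{Proof proposal for \refC{C1}.}
The plan is to derive everything from \refT{T1} by specialization, plus the Cram\'er--Wold device. For $T'\in\st$ let $f_{T'}$ be the functional $f_{T'}(S):=\ett{S=T'}$; then $F(\ctn;f_{T'})=n_{T'}(\ctn)$ by \eqref{Fnt}, and more generally any finite linear combination $\sum_j c_j n_{T_j}(\ctn)$ equals $F(\ctn;f)$ with $f:=\sum_j c_j f_{T_j}$, a bounded functional on $\st$. For such $f$ one has $f(\ctn)=0$ for all $n>\max_j|T_j|$, so $\E f(\ctn)\to0$ and $\E f(\ctn)^2\to0$, the series in \eqref{t1v2} has only finitely many nonzero terms and hence converges, and $\E|f(\cT)|\le\sum_j|c_j|\pi_{T_j}<\infty$. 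Thus \refT{T1}\ref{T1e}--\ref{T1v} applies to every such $f$; in particular, taking $f=f_T$ gives $\mu=\E f_T(\cT)=\PP(\cT=T)=\pi_T$, whence \eqref{eT} follows from \eqref{t1e}. Since $\st$ is countable, joint convergence of an arbitrary finite subfamily (obtained via Cram\'er--Wold from the one-dimensional statement \eqref{t1n} applied to linear combinations) is exactly the asserted joint convergence \eqref{asnT}, and the limits $Z_T$ are jointly Gaussian because each finite linear combination is a limit of asymptotically normal variables.

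It then remains to identify the asymptotic variances and covariances by evaluating the quadratic form \eqref{gam} on the functionals $f_T$. Here I would use two elementary facts: $F(T;f_{T'})=n_{T'}(T)$, and $n_T(T)=1$, since the root is the only node $v$ of $T$ with $T_v=T$ (every other node gives a strictly smaller fringe subtree). Hence, with $f=f_T$, $\mu=\pi_T$, we get $\E\bigpar{f_T(\cT)\bigpar{F(\cT)-|\cT|\mu}}=\pi_T\bigpar{n_T(T)-|T|\pi_T}=\pi_T(1-|T|\pi_T)$, $\Var f_T(\cT)=\pi_T(1-\pi_T)$, and $\mu^2/\gss=\pi_T^2/\gss$. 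Substituting these into \eqref{gam} and simplifying yields precisely $\gam_{T,T}=\pi_T-(2|T|-1+\gs\qww)\pi_T^2$ as in \eqref{gamT}, and then \eqref{varT} is \eqref{t1v} for $f=f_T$.

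For the covariances I would polarize: for $T_1\neq T_2$ apply \refT{T1}\ref{T1v} to $f_{T_1}+f_{T_2}$ and to $f_{T_1}-f_{T_2}$ and use $\Cov(X,Y)=\tfrac14\bigpar{\Var(X+Y)-\Var(X-Y)}$, which turns \eqref{t1v} into $\Cov\bigpar{n_{T_1}(\ctn),n_{T_2}(\ctn)}=n\cdot\tfrac14\bigpar{\gam^2(f_{T_1}+f_{T_2})-\gam^2(f_{T_1}-f_{T_2})}+o(n)$. Extracting the cross term of \eqref{gam} amounts to collecting the bilinear quantities $\E\bigpar{f_{T_1}(\cT)F(\cT;f_{T_2})}=\pi_{T_1}n_{T_2}(T_1)$ together with its partner $\pi_{T_2}n_{T_1}(T_2)$, $\E\bigpar{f_{T_i}(\cT)|\cT|}=|T_i|\pi_{T_i}$, $\Cov\bigpar{f_{T_1}(\cT),f_{T_2}(\cT)}=-\pi_{T_1}\pi_{T_2}$ (the events $\set{\cT=T_1}$ and $\set{\cT=T_2}$ are disjoint), and $\mu_1\mu_2/\gss=\pi_{T_1}\pi_{T_2}/\gss$; combining them gives $\gam_{T_1,T_2}$ exactly as in \eqref{gamTT}, proving \eqref{covT}. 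I do not expect a real obstacle: the hypotheses of \refT{T1} hold for essentially trivial reasons, and the only genuine content is the bookkeeping in the variance/covariance computation — remembering $n_T(T)=1$ and handling the non-symmetric term $\E\bigpar{f(\cT)F(\cT;f)}$ correctly under polarization.
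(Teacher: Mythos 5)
Your proposal is correct and follows essentially the same route as the paper: apply \refT{T1} to finite linear combinations of the indicators $\ett{T=T_i}$ (for which \eqref{t1v1}--\eqref{t1v2} hold trivially by finite support), evaluate \eqref{gam} as a quadratic form to get $\gam_{T_1,T_2}$, and conclude by the Cram\'er--Wold device. Your explicit computations, including the observation $n_T(T)=1$ and the polarization of the cross term, correctly reproduce \eqref{gamT} and \eqref{gamTT}, which the paper leaves as "a simple calculation."
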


We say that the functional $f$ has \emph{finite support}
if $f(T)\neq0$ only for finitely many trees $T\in\st$;
equivalently,
there exists a constant $K$ such that $f(T)=0$ unless $|T|\le K$.
Note that a functional with finite support necessarily is bounded.
By \eqref{Fnt}, the additive functionals $F$ that arise from functionals $f$
with finite support are exactly the finite linear combinations of subgraph
counts $n_{T'}(T)$. Hence \refC{C1} is equivalent to asymptotic normality
(with convergence of mean and variance) for $F(\ctn)$ whenever $f$ has
finite support.
The asymptotic variance
$\gam^2=\lim_\ntoo \Var F(\ctn)/n$  is given by \eqref{gam} or, equivalently,
follows from \eqref{gamT}--\eqref{gamTT}.

For functionals with finite support, we can show that 
$\gam^2>0$ except in
trivial cases, \cf{} \refR{Rgam0}.

\begin{theorem}
  \label{Tgam} 
Suppose that $f$ is a functional on $\st$ with finite support, and that
$\gam^2=0$. 
\begin{romenumerate}[-10pt]
\item 
If the support \set{k:p_k>0} of $\xi$ contains at least two positive
integers,
then $f(\ct)=F(\ct)=F(\ctn)=0$ a.s.
\item 
Otherwise, \ie, 
if $\set{k:p_k>0}=\set{0,r}$ for some $r>1$,
then $f(\ct)=a\ett{|\ct|=1}$ for some real $a$ 
and $F(\ct)=a(n-(n-1)/r)$ is deterministic.
\end{romenumerate}
\end{theorem}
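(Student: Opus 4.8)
The plan is to use the central limit theorem \refT{T1}\ref{T1v} to show that $\gam^2=0$ forces $F(\ctn)$ to be a.s.\ constant for every $n$, and then to read off $f$ from the resulting deterministic recursion. Write $K$ for a constant with $f(T)=0$ whenever $|T|>K$. Then $f(\ctn)=0$ for $n>K$, so the hypotheses \eqref{t1v1}--\eqref{t1v2} hold trivially, \refT{T1}\ref{T1v} applies, and $\gam^2=0$ gives $\Var F(\ctn)=o(n)$. Now fix any $s>K$ with $\PP(|\cT|=s)>0$, and given $\ctn$ contract every fringe subtree of $\ctn$ of size exactly $s$ to a single marked leaf; call the result the skeleton $S$ and let $N_s$ be the number of marked leaves. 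A standard property of \cGWt{s} is that, conditionally on $S$, the $N_s$ contracted subtrees are i.i.d.\ copies of $\cT_s$ --- this uses that whether a non-contracted node has subtree-size $s$ is determined by $S$, and that an allowed tree of size $s$ has no proper fringe subtree of size $s$. Since $s>K$, every node of $\ctn$ that is not a proper descendant of a marked leaf has fringe subtree of size either $<s$ (hence lying inside $S$) or $>K$, so its $f$-value is a function of $S$ alone; thus $F(\ctn)=\Phi(S)+\sum_{i=1}^{N_s}F(\cT_s^{(i)})$ with $\Phi$ a function of $S$ and the $\cT_s^{(i)}$ i.i.d.\ given $S$. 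By the law of total variance $\Var F(\ctn)\ge\E[N_s]\Var F(\cT_s)$, while \refT{T0} applied to the bounded toll $\ett{|T|=s}$ gives $\E[N_s]/n\to\PP(|\cT|=s)>0$; together with $\Var F(\ctn)=o(n)$ this forces $\Var F(\cT_s)=0$, i.e.\ $F(\cT_s)$ is a.s.\ equal to a constant, for every $s>K$ with $\PP(|\cT|=s)>0$.

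I would then bootstrap this down to all sizes. Since $\Var\xi>0$, some $r\ge2$ lies in the support of $\xi$. Given any $B$ with $\PP(\cT=B)>0$, let $T$ have root-degree $r$, one branch equal to $B$, and the other $r-1$ branches equal to a fixed allowed tree large enough that $|T|>K$; varying $B$ among allowed trees of size $|B|$ leaves $|T|$ and $f(T)=0$ unchanged, so the constancy just proved for $F(T)$ forces $F(B)$ to be constant. Hence, for each $n$, $F$ is a.s.\ equal to some constant $c_n$ on the allowed trees of size $n$, so $F(\ctn)$ and $F(\cT)$ are deterministic. The recursion \eqref{toll} now reads $f(T)=c_{|T|}-\sum_i c_{|T_i|}$ for every allowed $T$ with branches $T_i$, so $f(T)$ depends only on the multiset of branch sizes, and finite support of $f$ yields the additive relation $c_{1+s_1+\dots+s_d}=c_{s_1}+\dots+c_{s_d}$ for every out-degree $d$ in the support of $\xi$, all $s_j$ with $\PP(|\cT|=s_j)>0$, and every total exceeding $K$.

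To finish, fix a positive degree $r$ in the support and a large allowed size $t$; the relation applied to branch sizes $(x,t,\dots,t)$ gives $c_{x+P_r}=c_x+(r-1)c_t$ with $P_r:=1+(r-1)t$, valid for all large allowed $x$. In case (i), if the support contains two positive degrees $r_1<r_2$, then iterating this identity $P_{r_2}$ times with $r_1$ and $P_{r_1}$ times with $r_2$ lands on the same size and gives $P_{r_2}(r_1-1)c_t=P_{r_1}(r_2-1)c_t$, i.e.\ $(r_1-r_2)c_t=0$, so $c_t=0$ for all large allowed $t$; a one-branch padding argument (put an arbitrary allowed size as one branch among large branches) then propagates $c_s=0$ to all allowed $s$, whence $f(T)=c_{|T|}-\sum_i c_{|T_i|}=0$ for all allowed $T$, i.e.\ $f(\cT)=F(\cT)=F(\ctn)=0$ a.s. In case (ii), where the support is $\{0,r\}$, the identities $c_{x+P_r}=c_x+(r-1)c_t$ (as $t$ ranges over large allowed sizes the corresponding steps $P_r$ have greatest common divisor equal to the period $r$) force $c$ to be affine along the allowed residue class for large sizes; the relation with branch sizes $(s,\dots,s)$ then pins down the affine constants up to one free parameter, and one-branch padding extends the resulting formula to all allowed sizes, giving $c_s=a(s-(s-1)/r)$, where $a:=c_1$ is the value of $f$ at the one-node tree. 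Substituting back into $f(T)=c_{|T|}-\sum_i c_{|T_i|}$ gives $f(T)=0$ whenever $|T|>1$, so $f(\cT)=a\ett{|\cT|=1}$ a.s.\ and $F(\cT)=F(\ctn)=c_n=a(n-(n-1)/r)$ is deterministic.

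The step I expect to be the main obstacle is the fringe-contraction identity of the first paragraph: to get the clean lower bound $\Var F(\ctn)\ge\E[N_s]\Var F(\cT_s)$ one must carefully justify both the conditional-i.i.d.\ description of the size-$s$ subtrees and the fact that, because $s>K$, the remaining part of $F(\ctn)$ is measurable with respect to the skeleton. Once that identity is in hand, the downward bootstrap and the elementary arithmetic solving the additive relation for $c$ --- in particular the observation that finite support forces $c$ to be affine on large sizes in case (ii) --- are routine.
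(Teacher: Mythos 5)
Your proposal is correct, but it proves the theorem by a genuinely different route than the paper. The paper represents $F(\ctn)$ via the degree sequence as a conditioned sum of an $(m-1)$-dependent stationary sequence, observes that $\gam^2=0$ forces the unconditioned partial sums $\tY_n$ to have variance $o(n)$, and then invokes a structure theorem of Leonov (in the form given by Bradley, and detailed in \cite{SJmdep}) to conclude that the summands form a coboundary, so that $\tY_n$ telescopes; the constraints on $f$ are then read off by plugging in explicit deterministic degree sequences. You instead obtain the degeneracy directly at the level of fringe subtrees: the law of total variance applied to the decomposition of $\ctn$ into a skeleton and the conditionally i.i.d.\ size-$s$ fringe subtrees gives $\Var F(\ctn)\ge \E[N_s]\Var F(\cT_s)$, and combining $\Var F(\ctn)=o(n)$ (from \refC{Cfinite}) with $\E[N_s]\sim n\PP(|\cT|=s)$ (from \refT{T0}) forces $F(\cT_s)$ to be a.s.\ constant for all large allowed $s$; a padding construction then pushes this down to all allowed sizes, and the rest is elementary arithmetic on the resulting deterministic additive recursion $c_{1+s_1+\dots+s_d}=\sum_j c_{s_j}$. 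I checked the arithmetic in both cases: the identity $P_{r_2}(r_1-1)-P_{r_1}(r_2-1)=r_1-r_2\neq0$ correctly kills $c_t$ in case (i) (including when $1$ is in the support), and in case (ii) the recursion does pin down $c_n=c_1\bigl(n-(n-1)/r\bigr)$ on the allowed residue class. What your approach buys is self-containedness and probabilistic transparency --- it avoids the external degenerate-$m$-dependent-sums theorem entirely --- at the cost of having to justify carefully the conditional i.i.d.\ description of the contracted size-$s$ subtrees given the skeleton (your two stated reasons, that a size-$s$ tree has no proper size-$s$ fringe subtree and that the skeleton determines the sizes of all uncontracted fringe subtrees, are exactly the right ones; the factorization of the Galton--Watson law over the contracted positions then makes this rigorous). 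The one step you flag as the main obstacle is indeed the only place requiring real care, and it does go through.
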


Equivalently, in (i), the matrix $(\gam_{T_1,T_2})_{|T_1|,|T_2|\le M}$ 
(where we only consider trees $T_1,T_2$ with $\PP(\ct=T_j)>0$)
is positive definite for every $M$; in (ii)
the submatrix 
$(\gam_{T_1,T_2})_{2\le|T_1|,|T_2|\le M}$  is positive definite.

\begin{remark}\label{RT1}
The condition
\eqref{t1v1} in \refT{T1}\ref{T1v}
is equivalent to
$\E f(\ctn)\to0$ together with
$\Var f(\ctn)\to0$, and  it implies
$\E |f(\ctn)|\to0$ as assumed in \ref{T1e}.
Both this condition and \eqref{t1v2} say that $f(T)$ is (on the average, at
least) decreasing as $|T|\to\infty$, but a rather slow decrease is
sufficient; for 
example, the theorem applies when $f(T)=1/\log^2|T|$ (for $|T|>1$).
If we assume better integrability of $\xi$, we can weaken the condition a
little, see \refR{Rbetter}, 
but not by much. 
In particular, it is \emph{not} enough to assume that $f$ is a
bounded functional. For a trivial example, let $f(T)=1$ for all trees $T$;
then $F(T)=|T|$ so $F(\ctn)=n$ is constant, with mean $n$ and variance 0.
However, the first two terms on the \rhs{} of \eqref{gam} vanish, so
$\gam^2=-\gs\qww<0$, which is absurd for an asymptotic variance,
and \eqref{t1v} and \eqref{t1n} fail.
Nevertheless,  in this trivial counterexample, 
it is only the value of $\gam^2$
that is wrong; \eqref{t1v} and  \eqref{t1n} trivially hold with $\gam^2=0$.
\refE{ESW} is a more complicated counterexample where $f$ is bounded (and
$f(T)\to0$ as $|T|\to\infty$ so \eqref{t1v1} holds) but at least one of
\eqref{t1v} and \eqref{t1n} fails (for any finite $\gam^2$); we conjecture
that both fail in this example.
\refE{EJW} is a related example where \eqref{t1v1} holds but 
$\Var F(\ctn)/n\to\infty$. 
\end{remark}

\begin{remark}
If we go further and allow $f(T)$ that grow with the size $|T|$, we cannot
expect the results to hold.
\citet{FillKapur-catalan} have made an interesting and illustrative study 
(for certain $f$)
of the case of binary trees, which is the case $\xi\sim \Bin(2,1/2)$ of
\cGWt, and presumably typical for other \cGWt{s} as well.
They show that for $f(T)=\log|T|$, $F(\ctn)$ is asymptotically normal, but
with a variance of the order $n\log n$.
And if $f(T)$ increases more rapidly, with $f(T)=|T|^\ga$ for some $\ga>0$,
then the variance is of order $n^{1+2\ga}$, and $F(\ctn)$ has, after
normalization, a non-normal limiting distribution.
We conjecture that similar results hold for general \cGWt{s} and increasing
$f$, but the precise limits presumably depend on the offspring distribution
$\xi$. 

Intuitively,
our conditions are such that the sum \eqref{F} is dominated by the many
small subtrees $T_v$; since different parts of our trees are only weakly
dependent on each other, this makes asymptotic normality plausible. For
a toll function $f$ that grows too rapidly with the size of $T$, the sum
\eqref{F} will on the contrary be dominated by 
large subtrees, which are more strongly dependent,
and then other limit distributions will appear. 
\end{remark}

\begin{remark}
For the $m$-ary search tree ($2\le m\le26$) and random recursive tree
a similar theorem holds, but there $f(T)$ may grow almost as
$|T|\qq$,
see \citet{HwangN} (binary search tree, $f$ depends on $|T|$ only),
\citet{FillKapur-mary} ($m$-ary search tree, $f$ depends on $|T|$ only),
\citet{HJ} (binary search tree and random recursive tree, general $f$).
A reason for this difference is that for a \cGWt,
the limit distribution of the size of the fringe subtree,  
which by \refT{T0} is the distribution 
of $|\cT|$, decays rather slowly, with $\PP(|\cT|=n)\asymp n\qqcw$, 
see \eqref{pett},
while 
the corresponding
limit distribution for fringe subtrees in a binary search tree or random
recursive tree decays somewhat faster, as 
$n^{-2}$, see \citet{Aldous-fringe}. 
Cf.~ also the related results in \citet[Theorem 13 and 14]{FillFK},
showing a similar contrast (but at orders $n\qq$ and $n$) between
uniform binary trees (an example of a \cGWt) and binary search trees 
for the asymptotic expectation of an additive functional.
\end{remark}

The counterexamples in Examples \ref{EJW}--\ref{ESW} are constructed to
have rather large correlations between $f(T_v)$ and $f(T_w)$ for different
subtrees $T_v$ and $T_w$. In typical applications, this is not the case, and
we expect \refT{T1} to hold also for nice functions $f$ that do not quite
satisfy \eqref{t1v1} and \eqref{t1v2}.
A simple example is the number of nodes of outdegree $r$, for some fixed
$r\ge0$ (with $p_r>0$). This equals $F(T)$ if we let 
$f(T):=\ett{\text{the root of $T$ has degree $r$}}$.
In this case, \citet[Theorem 2.3.1]{Kolchin} has proved asymptotic
normality, see further Examples \ref{Eleaves}--\ref{Er}.
We can extend this as follows.

We say that a functional $f(T)$ on $\st$ is \emph{local} 
(with \emph{cut-off} $M$)
if it depends only on
the first $M$ generations of $T$, for some  $M<\infty$, \ie, if we let
$T\MM$ denote $T$ truncated at height $M$, then $f(T)=f(T\MM)$.
More generally, we say that $f$ is \emph{weakly local} 
(with \emph{cut-off} $M$)
if $f(T)$ depends on 
$|T|$ and $T\MM$ for some $M$.

\begin{theorem}
  \label{Tlocal}
Let $\ctn$ be a \cGWt{} as in \refT{T1}.  
Suppose that $f:\st\to\bbR$ is a bounded and local functional.
Then the conclusions \eqref{t1e},  \eqref{t1v} and \eqref{t1n} hold
for some $\gam^2<\infty$.

More generally, the same holds if
$f$ is a bounded and weakly local functional such that 
$\E f(\ctn)\to0$ and $\sum_n|\E f(\ctn)|/n<\infty$.
\end{theorem}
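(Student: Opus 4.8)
The plan is to follow the argument used to prove \refT{T1} in \refS{Spf}, the point being that for a bounded and (weakly) local $f$ the role of the conditions \eqref{t1v1}--\eqref{t1v2} is taken over by boundedness together with the locality of $f$. We may assume $\norm{f}_\infty\le1$. Note that the statement cannot be reduced to \refC{C1} by splitting $f$ into a finite-support part and a remainder $g$ with $\Var F(\ctn;g)=o(n)$: when $f\equiv1$ one has $\gam^2=0$, whereas by \refT{Tgam} every non-trivial finite-support functional has strictly positive asymptotic variance, so such a splitting is impossible and a direct argument is needed.

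First, for the expectation \eqref{t1e}, write $\E F(\ctn)=\sum_{T\in\st}f(T)\,\E n_T(\ctn)$. By \eqref{eT}, $\E n_T(\ctn)-n\pi_T=o(\sqrt n)$ for each fixed $T$, while $\bigabs{\E n_T(\ctn)-n\pi_T}\le \E n_T(\ctn)+n\pi_T$. Summing first over $|T|\le L$ (a finite sum of terms that are $o(\sqrt n)$) and then over $|T|>L$, and using that $\sum_{|T|>L}\bigpar{\E n_T(\ctn)+n\pi_T}=\E\bigabs{\set{v:|(\ctn)_v|>L}}+n\,\PP(|\cT|>L)\le CnL\qqw$ uniformly in $n$ — by \eqref{pett} and the corresponding uniform tail bound for the fringe-subtree size of $\ctn$, \cf{} \cite{SJ264} — we obtain $\bigabs{\E F(\ctn)-n\mu}\le\norm{f}_\infty\bigpar{o(\sqrt n)+CnL\qqw}$ for every $L$, hence $\E F(\ctn)=n\mu+o(\sqrt n)$. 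This uses only boundedness of $f$; in the weakly local case one may alternatively invoke \refT{T1}\ref{T1e}, since there $\E f(\ctn)\to0$ is assumed.

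For the variance and the central limit theorem one represents $\ctn$ by the associated \GWf{} conditioned on total size $n$ (equivalently, by its depth-first walk conditioned to first reach $-1$ at step $n$), so that $F(\ctn)=\sum_{v\in\ctn}f(T_v)$ is a sum of $n$ uniformly bounded terms. Locality makes $f(T_v)=f(T_v\MM)$ depend only on the descendants of $v$ up to depth $M$, so that, modulo the effect of the global conditioning, the family $\bigpar{f(T_v)}_{v\in\ctn}$ is essentially $M$-dependent along $\ctn$. Running the variance computation and the conditional central limit theorem of \refS{Spf} with $f$ now merely bounded, rather than small, yields a finite $\gam^2\ge0$, $\Var F(\ctn)=n\gam^2+o(n)$, and $\bigpar{F(\ctn)-n\mu}/\sqrt n\dto N(0,\gam^2)$. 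The convergence as \ntoo{} of the empirical means and covariances entering this computation — which in \refT{T1} is guaranteed by \eqref{t1v1}--\eqref{t1v2} — is here supplied by \refL{Lfloc}, which relates bounded (weakly) local functionals of $\ctn$ to the local weak limit $\hct$ of $\ctn$. In the weakly local case $f(T_v)$ depends in addition on $|T_v|$, introducing a long-range dependence along ancestral lines; this is handled exactly as in \refT{T1}, with \eqref{t1v1}--\eqref{t1v2} replaced by the hypotheses $\E f(\ctn)\to0$ and $\sum_n\abs{\E f(\ctn)}/n<\infty$.

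The principal obstacle is that, unlike in \refT{T1}, the corresponding additive functional of the \emph{unconditioned} tree $\cT$ fails to be integrable: by the branching property $\E F(\cT;f)=\mu\,\E|\cT|$, which is infinite whenever $\mu\neq0$, and $F(\cT;f)$ generically has infinite variance even when $\mu=0$, its variance being of order $|\cT|$ with $\E|\cT|=\infty$. Hence one cannot transfer a limit theorem from the unconditioned model; everything must be carried out within the conditioned model, where $\bigabs{F(\ctn)}\le n$ is automatically bounded, and the delicate point is to obtain the required second-moment estimates there \emph{uniformly in $n$} — which is precisely what \refL{Lfloc}, together with the conditional central limit theorem, accomplishes.
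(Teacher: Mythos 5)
Your proposal has two genuine gaps. First, the expectation argument does not deliver \eqref{t1e}. From $|\E F(\ctn)-n\mu|\le o_L(\sqrt n)+CnL\qqw$, valid for each fixed $L$, you can only conclude $\E F(\ctn)=n\mu+o(n)$: the tail term $CnL\qqw$ is of order $n$, not $o(\sqrt n)$, and letting $L=L(n)\to\infty$ destroys the control of the finite sum of non-uniform $o(\sqrt n)$ terms. The paper instead reduces the local case to the weakly local one by the centering $f\mapsto f-\E f(\hct)$ (which leaves $F(T)-|T|\mu$ unchanged) and then invokes \refL{Lfloc} to get $\E f(\ctn)=O(n\qqw)$, after which \refT{T1}\ref{T1e} applies. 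Your proposal never performs this centering, and without it the hypothesis $\E f(\ctn)\to0$ genuinely fails for, say, $f(T)=\ett{\text{root of $T$ has degree }r}$, where $\E f(\ctn)\to rp_r\neq p_r=\mu$.

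Second, for the variance and the CLT, ``running the computation of \refS{Spf} with $f$ merely bounded'' is precisely what cannot be done: \refT{Tvar<} requires $\sum_k\sqrt{\E f(\ctk)^2}/k<\infty$, which fails for a bounded non-decaying local functional. Moreover, your claim that locality makes $\bigpar{f(T_v)}_{v}$ ``essentially $M$-dependent'' is not correct: the first $M$ generations of $T_v$ occupy an unboundedly long window of the depth-first degree sequence, so the $m$-dependent CLT underlying \refL{Lan} does not apply to $f$ directly. The paper's actual route is the truncation you dismiss at the outset: $f\NN(T):=f(T)\ett{|T|\le N}$ has finite support, so \refC{Cfinite} and \refL{Lan} apply to it, and the missing ingredient is \refT{Tvarlocal}, a variance bound for bounded weakly local functionals which gives $n\qw\Var\bigpar{F(\ctn)-F\NN(\ctn)}\le\eps_N$ uniformly in $n$ with $\eps_N\to0$; its proof exploits the fact that, conditionally on the first $M$ generations of $\ctk$, the contribution to $F(\ctk)$ from deeper nodes is centered, so that $\E\bigpar{f_k(\ctk)F(\ctk)}=O(1)$ rather than the $O(\sqrt k)$ obtained from the \CSineq. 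Your observation that a single splitting with an $o(n)$-variance remainder is impossible is correct but beside the point: the paper uses a sequence of splittings whose remainder variance constants tend to $0$, and $\gamNx\to\gam$ by a Cauchy argument. Without \refT{Tvarlocal}, or an equivalent uniform-in-$n$ estimate, your argument does not close.
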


The proof in \refS{Spf} shows also 
that the asymptotic variance $\gam^2$  equals
$\lim_\Ntoo \gamN^2$, where $\gamN^2$ is given by
\eqref{gamN} or, in the case of a bounded local functional,
\eqref{gamN} applied to $f(T)-\E f(\hct)$, with $\hct$ defined in \eqref{cant}.

We give some examples in \refS{Sex}. Sections \ref{Strees}--\ref{Sprel}
contain preliminaries. The expectation $\E F(\ctn)$ is studied in
\refS{Sexp}, and \refT{T1}\ref{T1e} is proved.
\refS{Svar} establishes  bounds for the variance $\Var F(\ctn)$, and 
proves the asymptotic \eqref{t1v}
in the special case of a functional $f$ with finite support.
\refS{San} shows asymptotic normality for functionals $f$ with finite
support.
Finally, in \refS{Spf}, the  variance bounds in \refS{Svar} and a truncation
argument are used to extend the latter results to more general $f$,
completing the proofs of the theorems above.

\begin{remark}
\label{Rlocal}
  In \refT{Tlocal}, $\gam^2$ is not always given by \eqref{gam} because 
$\E\bigpar{f(\cT)\bigpar{F(\cT)-|\cT|\mu}}$ does not necessarily exist (in
  the usual sense, as an absolutely convergent integral), see \refE{Er};
thus we in general take limits using truncations.  
\end{remark}

\begin{remark}\label{Rwiener}
Although the \GWt{} $\ct$ is finite a.s., its expected size
$\E|\ct|=\infty$, as is seen from \eqref{pett} or directly from the
definition.
Since the random fringe tree $\ctnx\dto\ct$ by \refT{T0}, it follows that
$\E|\ctnx|\to\infty$; similarly, \refT{T0}(ii) implies
$\E\bigpar{|\ctnx|\mid\ctn}\pto\infty$. 

In fact, for any tree $T$ with $|T|=n$, 
letting $d(v)$ be the depth of $v$ and
defining a partial order on the nodes of $T$ by  $v\le w$ if $v$ is on the
path from the root to $w$, 
\begin{equation*}
  \E|\Tx|=\frac1n\sum_{v\in T} |T_v|
=\frac1n\sum_{v,w\in T} \ett{v\le w}
=\frac1n\sum_{w\in T}\bigpar{d(w)+1}
=1+\frac1n\sum_{w\in T}{d(w)},
\end{equation*}
\ie, 1 plus the average path length. Well-known results on the average path
length in a \cGWt, see  \citet{AldousII,AldousIII}, thus imply
\begin{equation}
  n\qqw \E\bigpar{|\ctnx|\mid\ctn} \dto \gs\qw\hat\xi,
\end{equation}
where $\hat\xi$ is twice the Brownian excursion area.
Hence, although the distribution of the size of a random fringe tree is
tight, so the size is bounded in probability,
the average fringe tree size is of the order $n\qq$.
Similarly,
\begin{equation*}
  \E|\Tx|^2
=\frac1n\sum_{v,w,u\in T} \ett{v\le w,\,v\le u}
=\frac1n\sum_{u,w\in T}\bigpar{d(w\wedge u)+1}
\end{equation*}
and by \cite[Theorem 3.1]{SJ146},
\begin{equation}
  n^{-3/2} \E\bigpar{|\ctnx|^2\mid\ctn} \dto \gs\qw\eta,
\end{equation}
for a certain positive random variable $\eta$.
Hence the average of the square of the fringe tree size is of 
the order $n^{3/2}$.
\end{remark}

\subsection{Some notation}
All unspecified limits are as \ntoo.

We let $C_1,C_2,\dots$ and $c_1,c_2,\dots$ denote unspecified positive
constants (possibly depending on $f$ and $\xi$, but not on $n$ and other
variables, and possibly different at different occurences).  
(We use $C_i$ for large constants and $c_i$ for small.)
We also use standard $O$ and $o$ notation (with the limit in $o$ 
as \ntoo{} unless otherwise said). Moreover, we sometimes use the less common notation
(for $a_n,b_n\ge0$)
$a_n \ll b_n$ for $a_n=O(b_n)$ (or, equivalently, $a_n\le C_1 b_n$).

The \emph{outdegree} of a node in a tree
is its number of children. (This is, except for the
root, the degree minus 1.) The \emph{degree sequence} of a tree $T\in\stn$ is 
the sequence $(d_1,\dots,d_n)$ of outdegrees of the nodes taken in
depth-first order, \ie, starting with the (out)degree $d_1$ of the root and
then taking the degree 
sequences of the branches $T_{v_i}$ one by one, where $v_1,\dots,v_{d_1}$
are the children of the root, in order.
It is easily seen that 
  a sequence $\ddn\in\bbN^n$ 
(where $\bbN\=\set{0,1,2,\dots}$)
is the degree sequence of a tree $T\in\stn$ if
  and only if 
  \begin{align}\label{ld}
\begin{cases}
\sum_{i=1}^j d_i \ge j,
& 1\le j<n,
\\	
\sum_{i=1}^n d_i =n-1,
\end{cases}
  \end{align}
see \eg{} \cite[Lemma 15.2]{SJ264}.
Note also that a tree in $\st$ is uniquely determined by its
degree sequence.

The \emph{depth} of a node $v$ in a tree 
is its distance to the root; we denote it by $d(v)$.

\section{Examples}\label{Sex} 

\begin{example}\label{Eleaves}
  The perhaps simplest non-trivial example is to take $f(T)=\ett{|T|=1}$.
Then $F(T)$ is the number of leaves in $T$.
We have 
$\E f(\cT)=\PP(|\cT|=1)=\PP(\xi=0)=p_0$.

Theorems \ref{T1} and \ref{Tlocal} both apply and show asymptotic normality
of $F(\ctn)$, 
and so does \refC{C1} since $F(T)=n_\bullet(T)$, where $\bullet$ is the tree
of order 1;
 \eqref{gam} yields
\begin{equation}\label{game0}
  \gam^2 = 2p_0(1-p_0)-p_0(1-p_0)-p_0^2/\gss
=p_0-(1+\gs\qww)p_0^2,
\end{equation}
which also is seen directly from \eqref{gamT}.
The asymptotic normality in this case (and a local limit theorem)
was proved by 
\citet[Theorem  2.3.1]{Kolchin}.
By \refT{Tgam}, or by a simple calculation directly from \eqref{game0},
$\gam^2>0$ except in the case $p_r=1-p_0=1/r$ for some $r\ge2$
when all nodes in $\ctn$ have 0 or $r$ children (full $r$-ary trees)
and $n_\bullet(\ctn)=n-(n-1)/r$ is deterministic.
\end{example}

\begin{example}\label{Er} 
A natural extension is to consider the number of nodes of outdegree $r$, for
some given integer $r\ge1$; we denote this by $n_r(T)$.
Then $n_r(T)=F(T)$ with $f(T)=1$ if the root of $T$ has
degree $r$, and $f(T)=0$ otherwise.
Asymptotic normality of $n_r(\ctn)$ too was proved  by
\citet[Theorem  2.3.1]{Kolchin}, with
\begin{equation} \label{er1}
  n\qqw\bigpar{F(\ctn)-np_r} \dto N\bigpar{0,\gam_r^2}
\end{equation}
where 
\begin{equation}\label{er2}
  \gam_r^2
=p_r(1-p_r)-(r-1)^2p_r^2/\gss,
\end{equation}
see also \citet{SJ132} (joint convergence and moment convergence, assuming
at least $\E\xi^3<\infty$),
\citet{Minami} and \citet[Section 3.2.1]{Drmota} (both assuming an exponential
moment) for different proofs.

It is easily checked that for $r>0$, $\gam_r>0$ except in the two
trivial cases $p_r=0$, when $n_r(\ctn)=0$,
and  $p_r=1-p_0=1/r$,
when all nodes have 0 or $r$ children (full $r$-ary trees)
and $n_r(\ctn)=(n-1)/r$
is deterministic.

In this example,
\begin{equation}
  \E f(\ctn) = \PP(\text{the root of $\ctn$ has degree $r$})
\to rp_r.
\end{equation}
see \cite{Kennedy} and \cite[Theorem 7.10]{SJ264}. 
Hence \eqref{t1v1} and \eqref{t1v2} both
fail, and we cannot apply \refT{T1}. (It does not help to subtract a
constant, since $f(\ctn)$ is an indicator variable.) However, $f$ is a
bounded local functional. Hence \refT{Tlocal} applies and yields
\eqref{er1},
together with convergence of mean and variance, for some $\gam_r$.
It is immediate from the definition of the \GWt{} $\ct$ that 
\begin{equation}\label{ermu}
\mu:=\E f(\cT) = \PP(\text{the root of $\ct$ has degree $r$})
=p_r.
\end{equation}
Similarly, 
we obtain joint convergence for
different $r$ by \refT{Tlocal} and the Cram\'er--Wold device.
(It seems that joint convergence has not been proved before without assuming
at least $\E\xi^3<\infty$.)

Nevertheless, this result is a bit disappointing, since we do not obtain the
explicit formula \eqref{er2} for the variance.
\refT{Tlocal} shows existence of $\gam^2$ but the formula 
(given by the proof)
as a limit of
\eqref{gamN} 
is rather involved, and we do not know any way to derive
\eqref{er2} from it. 
In this example, because of the simple structure of $f$, we can use a
special argument and derive both \eqref{er2} 
and the asymptotic covariance $\gam_{rs}$ for
two different outdegrees $r,s\ge0$:
\begin{equation}\label{erc}
  \gam_{rs} = 
-p_rp_s-(r-1)(s-1)p_rp_s/\gss, \qquad r\neq s,
\end{equation}
(as proved by  \cite{SJ132} provided $\E \xi^3<\infty$);
we give this proof in \refS{Spf}.

Note that by \eqref{er1}, 
$\liminf_\ntoo n\qqw\E|F(\ctn)-n\mu| \ge (2/\pi)\qq\gam_r$,
so assuming $\gam_r>0$, $\E|F(\ctn)-n\mu|\ge \cc n\qq$, at least for large
$n$. It is easily seen that also 
$\E f(\ctn)|F(\ctn)-n\mu|\ge \cc n\qq$, at least for large $n$; hence,
using \eqref{pett},
\begin{equation*}
  \E \bigabs{f(\cT)\bigpar{F(\cT)-|T|\mu}}
=
\sumni \pi_n  \E \bigabs{f(\ctn)\bigpar{F(\ctn)-n\mu}}
=\infty,
\end{equation*}
which shows that the  expectation in \eqref{gam} does not exist, so
$\gam^2$ is not given by \eqref{gam}.
\end{example}

\begin{example}\label{Eprotected}
A node in a (rooted) tree is said to be \emph{protected}
if it is neither a leaf nor the parent of a leaf.
Asymptotics for the expected number of protected nodes 
in various random trees, including 
several  examples of \cGWt{s}, have been given by \eg{} 
\citet{CheonShapiro} and \citet{Mansour}, and 
convergence in probability of
the fraction of protected nodes is proved 
for general \cGWt{s} by \citet{SJ283}.

We can now extend this to asymptotic normality of the number of protected
nodes, in any \cGWt{} $\ctn$ with $\E\xi=1$ and $\gss<\infty$.
We define
$f(T):=\ett{\text{the root of $T$ is protected}}$,
and then $F(T)$ is
the number of protected nodes in $T$.
Since $f$ is a bounded and local functional, \refT{Tlocal} applies and shows
asymptotic normality of $F(\ctn)$.

The asymptotic mean $\mu=\E f(\cT)$ is easily calculated, 
see \cite{SJ283} where also
explicit values are given for several examples of \cGWt{s}.
However, as in \refE{Er}, we do not see how to find an explicit value of
$\gam^2$ from \eqref{gamN}  (although it ought to be
possible to use these for numerical calculation for a specific offspring
distribution). It seems possible that there is some other argument
to find $\gam^2$, perhaps
related to our proof of \eqref{erc} in \refS{Spf}, but we have not pursued this
and we
leave it as an open problem to find the asymptotic variance
$\gam^2$, for example for uniform labelled trees or uniform binary trees.
\end{example}

\begin{example}\label{EWagner}
\citet{Wagner} studied the number $s(T)$
of arbitrary subtrees (not necessarily fringe  subtrees) of the tree $T$,
and the 
number $s_1(T)$ of such subtrees that contain the root.
He noted that if $T$ has branches 
$T_1,\dots,T_d$, then
$s_1(T)=\prod_{i=1}^d (1+s_1(T_i))$ and thus
\begin{equation}
  \log\bigpar{1+s_1(T)}=\log\bigpar{1+s_1(T)\qw}
+\sum_{i=1}^d\log\bigpar{1+s_1(T_i)},
\end{equation}
so $\log\bigpar{1+s_1(T)}$ is an additive functional with toll function 
$f(T)=\log\bigpar{1+s_1(T)\qw}$, see \eqref{toll}.
\citet{Wagner} used this and the special case of \refT{T1} shown by him to show
asymptotic
normality of $\log\bigpar{1+s_1(\ctn)}$ 
(and thus of $\log{s_1(\ctn)}$)
for the case of uniform random
labelled trees (which is $\ctn$ with $\xi\sim\Po(1))$. We can generalize this
to arbitrary \cGWt{s} with $\E\xi=1$ and $\E\xi^2<\infty$ by \refT{T1},
noting that  
$|f(\ctn)|\le s_1(\ctn)\qw\le n\qw$ (since $s_1(T)\ge |T|$ by considering
only paths from the root); hence \eqref{t1v1}--\eqref{t1v2} hold.
Consequently,
\begin{equation}\label{wag}
  \bigpar{\log s_1(\ctn)-n\mu}/\sqrt n \dto N(0,\gam^2)
\end{equation}
for some $\mu=\E \log\bigpar{1+s_1(\ct)\qw}$ and $\gam^2$ 
given by \eqref{gam}
(both depending on the distribution of $\xi$); \citet{Wagner} makes a
numerical calculation of $\mu$ and $\gss$ for his case.

Furthermore, as noted in \cite{Wagner},
$s_1(T)\le s(T)\le |T|s_1(T)$ for any tree (an arbitrary subtree is a
fringe subtree of some subtree containing the root), and thus the asymptotic
normality \eqref{wag} holds for $\log s(\ctn)$ too.

Similarly, the  example by \citet[pp.~78--79]{Wagner} on the average 
size of a  subtree containg the root 
generalizes to arbitrary \cGWt{s} (with $\E\xi^2<\infty$), showing that  
the average size is asymptotically normal with expectation $\sim \mu n$
and variance $\sim \gam^2 $ for some $\mu>0$ and $\gam^2$;
we omit the details. We conjecture that the same is true for the average
size of an arbitrary subtree, as shown in \cite{Wagner} for the case
considered there.
(Note that a uniformly random arbitrary subtree thus is much larger than a
uniformly random fringe subtree, see \refR{Rwiener}.)
\end{example}

\begin{example}
  Another example by \citet{Wagner} is the number of nodes  whose children
  all are leaves (\ie, no grandchildren; \cf{} \refE{Eprotected}).
This is $F(T)$ with $f(T):=\ett{\text{$T$ has no nodes of depth $>1$}}$.
This is a bounded local functional, so \refT{Tlocal} applies and shows
asymptotic normality of $F(\ctn)$ for any \cGWt{} 
with $\E\xi=1$ and $\gss<\infty$, generalizing the result by \citet{Wagner}.
Moreover, $f(T)=1$ only if $T$ is a star, and thus 
$\E f(\ctn)=p_{n-1} p_0^{n-1}/\PP(|\ct|=n)=O\bigpar{n^{3/2}p_0^n}$ so
  \eqref{t1v1}--\eqref{t1v2} hold and \refT{T1} applies too.
\end{example}

\section{Galton--Watson trees}\label{Strees}

Given a random nonnegative integer-valued random variable $\xi$, 
with distribution $\cL(\xi)$,
the \emph{\GWt}
$\cT$
with offspring distribution $\cL(\xi)$ is constructed recursively by
starting with a root and 
giving each node a number of children that is a new copy of $\xi$,
independent of the numbers of children of the other nodes.
(This is thus the family tree of a \GWp, see \eg{} \cite{AthreyaN}.) Obviously, 
only the distribution of $\xi$ matters; we sometimes abuse language and say
that $\cT$ has offspring distribution $\xi$.
We assume that $\PP(\xi=0)>0$ (otherwise the tree is \as{} infinite).

Furthermore, let $\ctn$ be $\cT$ conditioned on having
exactly $n$ nodes; this is called a \emph{\cGWt}.
(We consider only $n$ such that $\PP(|\cT|=n)>0$.)

\begin{remark}
  It is well-known that the \GWt{} $\cT$ is \as{} finite if and only if 
$\E\xi\le1$, see   \cite{AthreyaN}. 
We will in this paper assume
that $\E\xi=1$, the \emph{critical} case. In most cases, but not all,
a \cGWt{} with an offspring distribution $\xi'$ with an expectation
$\E\xi'\neq1$ is equivalent to a \cGWt{} with another offspring distribution
$\xi$ satisfying $\E\xi=1$, so this is only a minor restriction. See \eg{}
\cite{SJ264} for details.
\end{remark}

\begin{remark}\label{RGWdegrees}
  The degree sequence of the \GWt{} $\cT$ (when finite) equals a sequence
  $\xi_1,\xi_2,\dots$ of independent copies of $\xi$, truncated at the
  unique place making the sequence a degree sequence of a tree, \cf{}
  \eqref{ld}. This follows immediately
from (and is equivalent to) the definition of $\cT$.
The degree sequence of the \cGWt{} $\ctn$ 
is more complicated and will be described in \refS{Sprel}.
\end{remark}

\begin{remark}\label{Rfinite}
  For any given $n$, $\st_n$ is finite, so there is only a finite number of
  possible realizations of $\ctn$. Hence, for any functional $f$, the random
  variables $f(\ctn)$ and $F(\ctn)$ are bounded for each $n$; in particular
  they always have finite expectations and higher moments.
\end{remark}

Conditioned \GWt{s} are also known as (a special case of) 
\emph{\sgrt{s}}, see \eg{} \cite{SJ264}.

\section{Preliminaries and more notation}\label{Sprel}

We assume throughout the paper
that $f:\st\to\bbR$ is a given functional on trees, and that 
$F$ is the corresponding subtree sum given by \eqref{F}.
We assume further that $\cT$ [$\cT_n$] is a [conditioned]
\GWt{} with  a given offspring
distribution $\xi$, with $\E\xi=1$ and $0<\gss:=\Var\xi<\infty$.
We let $\xi_1,\xi_2,\dots$ be a sequence of independent copies of $\xi$, and
let
\begin{equation}\label{sn}
  S_n\=\sumin \xi_i.
\end{equation}
We denote the probability distribution of $\xi$ by $(p_k)_0^\infty$, \ie,
$p_k:=\PP(\xi=k)$. 

\subsectionx
We recall the local limit theorem, see \eg{} 
\cite[Theorem 1.4.2]{Kolchin} or 
\cite[Theorem VII.1]{Petrov},
which in our setting can be stated as follows.
Recall that the 
\emph{span} of an integer-valued random variable $\xi$ 
is the largest integer $h$ such that $\xi\in a+h\bbZ$ 
\as{} for some $a\in\bbZ$; we will only consider $\xi$ with $\PP(\xi=0)>0$
and then the span is the largest integer $h$ such that $\xi/h\in\bbZ$ a.s.,
\ie, the greatest common divisor of \set{n:\PP(\xi=n)>0}.
(Typically, $h=1$, but we have for example $h=2$ in the case of full binary
trees, with $p_0=p_2=1/2$.)
In the case we are interested in, the local limit theorem can be stated as
follows. 
\begin{lemma}\label{LLT}
  Suppose that $\xi$ is an integer-valued random variable with 
$\PP(\xi=0)>0$, 
$\E\xi=1$, 
$0<\gss\=\Var\xi<\infty$ and span $h$.
Then, as \ntoo, uniformly in all $m\in h\bbZ$,
\begin{equation}\label{llt}
  \PP(S_n=m)=\frac{h}{\sqrt{2\pi\gss n}} \Bigpar{e^{-(m-n)^2/(2n\gss)}+o(1)}.
\end{equation}
\nopf
\end{lemma}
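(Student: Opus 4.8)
The plan is to give the classical Fourier-analytic proof of this local limit theorem (Gnedenko's theorem), the only point requiring care being that we assume merely $\E\xi^2<\infty$. First I would reduce to span $1$: since $\PP(\xi=0)>0$, having span $h$ means $\xi\in h\bbZ$ a.s., so $\xi=h\xi'$ with $\xi'$ integer-valued of span $1$, $\E\xi'=1/h$ and $\Var\xi'=\gss/h^2$. As $S_n=hS'_n$ with $S'_n\=\sum_{i=1}^n\xi'_i$, we have $\PP(S_n=m)=\PP(S'_n=m/h)$ for $m\in h\bbZ$, and a short computation shows the asserted estimate is exactly the span-$1$ local limit theorem for $S'_n$ evaluated at $m/h$ (mean $n/h$, variance $n\gss/h^2$). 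So it suffices to treat $h=1$, and I write $\psi(t)\=\E e^{\ii t(\xi-1)}$ for the characteristic function of the centred step, noting $|\psi(t)|<1$ for $0<|t|\le\pi$.

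Next, Fourier inversion on $\bbZ$ together with the Fourier representation of the Gaussian density give, with $k=m-n$,
\[
\PP(S_n=m)-\frac1{\sqrt{2\pi\gss n}}e^{-k^2/(2\gss n)}
=\frac1{2\pi}\int_{-\pi}^{\pi}e^{-\ii tk}\psi(t)^n\dd t-\frac1{2\pi}\int_{-\infty}^{\infty}e^{-\ii tk}e^{-\gss nt^2/2}\dd t .
\]
Substituting $t=s/\sqrt n$ and taking absolute values (so the unimodular factor $e^{-\ii tk}$ drops out, which is what makes the resulting bound uniform in $m$), it suffices to prove
\[
\int_{-\pi\sqrt n}^{\pi\sqrt n}\bigabs{\psi(s/\sqrt n)^n-e^{-\gss s^2/2}}\dd s+\int_{|s|>\pi\sqrt n}e^{-\gss s^2/2}\dd s\longrightarrow 0 ;
\]
multiplying through by $\sqrt n$ then recovers the $o(1)$ claimed in \refL{LLT}. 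The second integral is a Gaussian tail and is negligible.

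For the first integral I would fix a small $\gd>0$ and split $|s|\le\pi\sqrt n$ into the outer range $\gd\sqrt n<|s|\le\pi\sqrt n$ and the inner range $|s|\le\gd\sqrt n$. On the outer range, span $1$ and compactness give $\rho\=\sup_{\gd\le|t|\le\pi}|\psi(t)|<1$, so the contribution is at most $2\pi\sqrt n\,\rho^n\to 0$ (and the Gaussian piece there is exponentially small). On the inner range, the finiteness of $\gss$ yields the qualitative expansion $\psi(t)=1-\gss t^2/2+o(t^2)$ as $t\to 0$; shrinking $\gd$ makes $|\psi(t)|\le e^{-\gss t^2/4}$ for $|t|\le\gd$, hence $|\psi(s/\sqrt n)^n|\le e^{-\gss s^2/4}$, an integrable dominating function, while $\psi(s/\sqrt n)^n\to e^{-\gss s^2/2}$ pointwise (take logarithms). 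Dominated convergence then handles the inner range, completing the argument.

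The only genuine obstacle is working with just $\E\xi^2<\infty$: this forces the use of the qualitative Taylor expansion and a dominated-convergence argument in the central region, in place of the explicit remainder bounds available under a third-moment hypothesis, and it is also why the error in \refL{LLT} is genuinely $o(n^{-1/2})$ rather than $O(n^{-1})$. The uniformity in $m$, by contrast, comes for free, since $m$ enters the Fourier integrals only through a factor of modulus $1$.
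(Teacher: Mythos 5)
Your proof is correct and complete: it is the classical Gnedenko/characteristic-function proof of the lattice local limit theorem, and the paper itself gives no proof but simply quotes the result from Kolchin and Petrov, where exactly this argument appears. Your treatment of the two delicate points — the reduction to span $1$ via $\xi=h\xi'$, and the use of the qualitative expansion $\psi(t)=1-\tfrac12\gss t^2+o(t^2)$ plus dominated convergence in the central range, since only $\E\xi^2<\infty$ is assumed — matches precisely the Fourier machinery the paper deploys later in the proof of Lemma~\ref{L2} (cf.\ \eqref{tgf}, \eqref{ccgf} and the dominated-convergence step at \eqref{mab}), so nothing further is needed.
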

In particular, which we will use repeatedly, as \ntoo{} with
$n\equiv 1 \pmod h$,
\begin{equation}\label{snn}
  \PP(S_n=n-1)\sim\frac{h}{\sqrt{2\pi\gss }}\,n\qqw.
\end{equation}

\subsectionx
As said above, a tree in $\st$ is uniquely described by its \ds{} $\ddn$.
We may thus define the functional $f$ also on finite nonnegative integer
sequences $\ddn$, $n\ge1$, by
\begin{align}\label{fdd}
f\ddn\=
\begin{cases}
f(T), &\text{$\ddn$ is the \ds{} of a tree $T$},\\
0, & \text{otherwise (\ie,  (\ref{ld}) is not satisfied)}.
\end{cases}
\raisetag{0.9\baselineskip}
\end{align}

If $T$ has \ds{} $\ddn$, and its nodes are numbered $v_1,\dots,v_n$ in
depth-first order so $d_i$ is the degree of $v_i$, then the subtree
$T_{v_i}$ has degree sequence $(d_i,d_{i+1},\dots,d_{i+k-1})$, where $k\le
n-i+1$ is the unique index such that $\ddil$ is a \ds{} of a tree, \ie,
satisfies \eqref{ld}.
By the definition \eqref{fdd}, we thus can write \eqref{F} as
\begin{equation}\label{Fa}
F(T) = \sum_{1\le i\le j\le n}f(d_i,\dots,d_j)
=\sum_{k=1}^n\sum_{i=1}^{n-k+1} f\ddil.
\end{equation}
Moreover, if we regard $\ddn$ as a cyclic sequence and allow wrapping around
by defining $d_{n+i}\=d_i$, we also have the more symmetric formula
\begin{equation}\label{Fc}
F(T) 
=\sum_{k=1}^n\sum_{i=1}^{n} f\ddil.
\end{equation}
The difference from \eqref{Fa} is that we have added some terms
$f(d_i,\dots,d_{i+k-1-n})$ where the indices wrap around, but these terms
all vanish by definition because 
$(d_i,\dots,d_{i+k-1-n})$ is never a \ds.
(The subtree with root $v_i$ is completed at the latest by $v_n$;
this also follows from \eqref{ld}.)

It is a well-known fact, see \eg{} \cite[Corollary 15.4]{SJ264},
that up to a cyclic shift, the \ds{} $\ddn$ of the \cGWt{} $\ctn$ has the
same distribution as $\bigpar{(\xi_1,\dots,\xi_n)\mid\xi_1+\dots+\xi_n=n-1}$.
Since \eqref{Fc} is invariant under cyclic shifts of $\ddn$, it follows
that, recalling \eqref{sn},
\begin{equation}\label{Fd}
  F(\ctn)
\eqd \lrpar{\sum_{k=1}^n\sum_{i=1}^{n} f(\xi_i,\dots,\xi_{i+k-1\bmod n})
\Bigm| S_n=n-1},
\end{equation}
where $j\bmod n$ denotes the index in \setn{} that is congruent to $j$
modulo $n$.

\subsectionx
We let, for $k\ge1$, $f_k$ be $f$ restricted to $\st_k$; more precisely, we 
define $f_k$ for all trees $T\in\st$ by $f_k(T)\=f(T)$ if $|T|=k$ and
$f_k(T)\=0$ otherwise. 
In other words,
\begin{equation}\label{fk}
  f_k(T)\=f(T)\cdot\ett{|T|=k}.
\end{equation}
Extended to integer sequences as in \eqref{fdd}, this
means that
\begin{equation}\label{fkd}
  f_k\ddn=f\ddn\cdot\ett{n=k}.
\end{equation}
Note that $\st_k$ is a finite set; thus $f_k$ is always a bounded function
for each $k$.

We further let, for $k\ge1$ and any tree $T$, with degree sequence $\ddn$,
\begin{equation}\label{Fk}
  F_k(T)\=F(T;f_k)
=\sum_{i=1}^{n-k+1} f_k\ddil.
\end{equation}
(We can also let the sum extend to $n$, wrapping around $d_i$ as in
\eqref{Fc}.) 
Obviously, 
\begin{align}\label{Fsum}
f(T)=\sumki f_k(T)&&&\text{and}&& F(T)=\sumki F_k(T)  
\end{align}
for any tree $T$,
where in both sums it suffices to consider $k\le|T|$ since
the summands vanish for $k>|T|$.

\subsectionx
It is well-known (see \citet{Otter}, or \cite[Theorem 15.5]{SJ264} and the
further references given there)
that for any $k\ge1$,
\begin{equation}\label{ptk}
{\PP(|\cT|=n)}=\frac{1}n\PP(S_n=n-1).  
\end{equation}
Hence, by \eqref{snn}, as \ntoo{} with $n\equiv 1\pmod h$,
see \citet{Kolchin},
\begin{equation}\label{pett}
\PP(|\cT|=n)\sim \frac{h}{\sqrt{2\pi\gss}} n^{-3/2}.
\end{equation}
In particular, $\PP(|\cT|=n)>0$ for all large $n$ with $n\equiv1\pmod h$,

We sometimes use the notation
\begin{equation}
  \label{pik}
\pi_n := \PP(|\cT|=n),
\end{equation}
recalling \eqref{ptk}--\eqref{pett}.

\section{Expectations}\label{Sexp}

We begin the proof of \refT{T1} by calculating the expectation $\E F(\ctn)$,
using \eqref{Fd} which converts this into a problem on expectations of
functionals of a sequence of \iid{} variables conditioned on their sum.
Results of this type have been studied before under various conditions, see 
for example \citet{ZabellPhD,Zabell80,Zabell}, \citet{Swensen} and
\citet{SJ132}. In particular, the results (and methods) of \citet{Zabell} are
closely related and partly overlapping (but the setup there is somewhat
different). 

We assume throughout the paper that $\xi=1$ and $0<\gss=\Var\xi<\infty$.
For simplicity we also assume in some proofs in the sequel that the span
$h$ of the offspring distribution is 1, 
omitting the minor (and standard) modifications in the general case.
All statements are true also for $h>1$.
(Note that when $h>1$, the \GWt{} $\cT$ always
has order $|\cT|\equiv 1\pmod h$, and thus we only consider $k,n\equiv
1\pmod h$. The modifications when $h>1$ consist in using the periodicity
of the characteristic function $\gf(t)$ and integrating only over $|t|<\pi/h$
in, for example, \eqref{ada};
we leave the details to the reader.)
We assume further tacitly that $n$ is so large that $\PP(|\cT|=n)>0$, \cf{}
\eqref{pett}.

By \eqref{Fd} and symmetry, 
\begin{equation}\label{efn}
  \E F(\ctn) = n \sumkn \E\bigpar{f\xik\mid S_n=n-1}.
\end{equation}

We consider first the expectation of each $F_k(\ctn)$ separately,
recalling \eqref{Fsum}. Note that each $f_k$ is bounded, and thus 
trivially $\E|f_k(\cT)|<\infty$. 

\begin{lemma}\label{Lefkn}
  If\/ $1\le k\le n$, then
\begin{equation}\label{lefkn}
  \begin{split}
  \E F_k(\ctn)& 
=n\frac{\PP\xpar{S_{n-k}=n-k}}{\PP(S_n=n-1)} \E{f_k(\cT)}.
  \end{split}
\end{equation}
\end{lemma}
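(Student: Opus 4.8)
The plan is to start from the symmetric formula \eqref{efn}, but applied to $f_k$ rather than $f$. Since $f_k\xix j = 0$ unless $j = k$ (by \eqref{fkd}), the double sum in the analogue of \eqref{efn} collapses: from \eqref{Fd} applied to $f_k$, together with symmetry under cyclic shifts, we get
\begin{equation*}
  \E F_k(\ctn) = n\,\E\bigpar{f_k\xik \mid S_n = n-1}
  = n\,\E\bigpar{f\xik\,\ett{\text{$\xik$ is a \ds{} of a tree}} \mid S_n = n-1}.
\end{equation*}
Here I am using that $f_k\xik = f\xik \cdot \ett{S_k = k-1 \text{ and the partial-sum condition in \eqref{ld} holds}}$, i.e.\ that $\xik$ is precisely the degree sequence of some tree of order $k$ (no wrap-around terms survive, by the remark after \eqref{Fc}).

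Next I would condition on the first $k$ coordinates. Write the conditional expectation as a sum over all admissible degree sequences $\ddl$, i.e.\ all sequences satisfying \eqref{ld} with $n$ replaced by $k$ (equivalently, all trees $T\in\st_k$). For such a sequence, $\sum_{i=1}^k d_i = k-1$, so conditioning on $\xi_1 = d_1,\dots,\xi_k = d_k$ forces $\xi_{k+1} + \dots + \xi_n = S_n - S_k = (n-1) - (k-1) = n-k$ on the event $S_n = n-1$. Using independence of the $\xi_i$,
\begin{equation*}
  \PP\bigpar{\xix k = \ddl \mid S_n = n-1}
  = \frac{\PP\bigpar{\xix k = \ddl}\,\PP\bigpar{S_{n-k} = n-k}}{\PP(S_n = n-1)},
\end{equation*}
where $S_{n-k} \= \xi_{k+1} + \dots + \xi_n$ has the same law as $\sum_{i=1}^{n-k}\xi_i$. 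Multiplying by $f(T) = f\ddl$ and summing over all $T\in\st_k$ gives
\begin{equation*}
  \E\bigpar{f_k\xik \mid S_n = n-1}
  = \frac{\PP(S_{n-k} = n-k)}{\PP(S_n = n-1)} \sum_{T\in\st_k} f(T)\,\PP\bigpar{\xix k = \ddl},
\end{equation*}
and by \refR{RGWdegrees} the sum $\sum_{T\in\st_k} f(T)\PP(\xix k = \ddl)$ is exactly $\E f_k(\cT) = \E\bigpar{f(\cT)\ett{|\cT|=k}}$, since the degree sequence of $\cT$ restricted to the event $|\cT| = k$ runs over admissible length-$k$ sequences with the product probability $\prod_{i=1}^k p_{d_i} = \PP(\xix k = \ddl)$. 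Combining with the displayed formula for $\E F_k(\ctn)$ yields \eqref{lefkn}.

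I do not expect a serious obstacle here; the only point requiring a little care is the bookkeeping identifying $\sum_{T\in\st_k} f(T)\PP(\xix k = \text{d.s.\ of }T)$ with $\E f_k(\cT)$ — one must check that the truncation of the i.i.d.\ degree sequence at the first place it becomes a tree's degree sequence (\refR{RGWdegrees}) contributes precisely the factor $\prod_{i=1}^k p_{d_i}$ and no more, which is immediate from \eqref{ld} since the partial sums $\sum_{i=1}^j d_i \ge j$ for $j<k$ guarantee the sequence has not terminated earlier, and $\sum_{i=1}^k d_i = k-1$ guarantees it terminates exactly at $k$. Boundedness of $f_k$ (finiteness of $\st_k$) ensures all sums and expectations are finite, so no convergence issues arise.
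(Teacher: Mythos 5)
Your proposal is correct and follows essentially the same route as the paper: reduce to $\E F_k(\ctn)=n\,\E\bigpar{f_k\xik\mid S_n=n-1}$ via the cyclic representation \eqref{Fd}, observe that $f_k\xik\neq0$ forces $S_k=k-1$ so the conditioning event becomes $\{S_n-S_k=n-k\}$, factor by independence, and identify $\E f_k\xik=\E f_k(\cT)$ via \refR{RGWdegrees}. Your explicit summation over degree sequences in $\st_k$ is just an unpacked version of the paper's indicator-function computation in \eqref{efkn}, and your careful check of the truncation bookkeeping matches the paper's appeal to \eqref{ld}.
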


\begin{proof}
If $f_k\xik\neq0$, then $S_k\=\xi_1+\dots+\xi_k=k-1$ by \eqref{ld}.
Consequently, for every $n\ge k$,
by \eqref{efn}, and the fact that the $\xi_i$ are \iid,
\begin{equation}\label{efkn}
  \begin{split}
  \E F_k(\ctn)& = n  \E\bigpar{f_k\xik\mid S_n=n-1}
\\&
=n\frac{\E\bigpar{f_k\xik\cdot\ett{S_n=n-1}}}{\PP(S_n=n-1)}
\\&
=n\frac{\E\bigpar{f_k\xik\cdot\ett{S_n-S_k=n-k}}}{\PP(S_n=n-1)}
\\&
=n\frac{\E{f_k\xik}\cdot\PP\xpar{S_n-S_k=n-k}}{\PP(S_n=n-1)}
\\&
=n\frac{\PP\xpar{S_{n-k}=n-k}}{\PP(S_n=n-1)} \E{f_k\xik}.
  \end{split}
\end{equation}
The result \eqref{lefkn} now follows by \refR{RGWdegrees}, 
which implies that, 
recalling again that
by definition
$f_k(T)=0$ unless $|T|=k$,
\begin{equation}\label{efkkt}
  \E f_k\xik=\E f_k(\cT).
\end{equation}

For future use, we give also an alternative derivation of \eqref{efkkt}. 
By taking $n=k$ in \eqref{efkn}, we obtain 
\begin{equation}\label{efkk}
  \begin{split}
  \E F_k(\cT_k)& 
=\frac{k}{\PP(S_k=k-1)} \E{f_k\xik}.
  \end{split}
\end{equation}
(We may assume  that $\PP(|\cT|=k)>0$, since the result trivially
is true if $\PP(|\cT|=k)=0$, when $f_k(\cT)=0$  a.s.)
Furthermore, since $f_k(T)=0$ unless $|T|=k$,
\eqref{F} yields $F_k(\cT_k)=f_k(\cT_k)$ and
\begin{equation}\label{efkk2}
  \begin{split}
\E F_k(\cT_k)=\E f_k(\cT_k)	
=\E\bigpar{ f_k(\cT)\mid |\cT|=k}
=\frac{\E f_k(\cT)}{\PP(|\cT|=k)}.
  \end{split}
\end{equation}
Finally,  
recalling \eqref{ptk} (which also follows by taking $f_k(T)=1$ in
\eqref{efkk}),
\eqref{efkk}--\eqref{efkk2} yield
\eqref{efkkt}. 
\end{proof}

\begin{lemma}\label{L2}     
  \begin{xenumerate}
\item 
Uniformly for all $k$  with\/ $1\le k\le n/2$,
as \ntoo,
\begin{equation}\label{l2}
  \begin{split}
\frac{\PP\xpar{S_{n-k}=n-k}}{\PP(S_n=n-1)} 
= 1+ O\parfrac{k}{n}+o\bigpar{n\qqw}.
  \end{split}
\end{equation}
\item 
If $n/2 < k\le n$, then 
\begin{equation}\label{l2b}
  \begin{split}
\frac{\PP\xpar{S_{n-k}=n-k}}{\PP(S_n=n-1)} 
= O\parfrac{n\qq}{(n-k+1)\qq}.
  \end{split}
\end{equation}
  \end{xenumerate}
\end{lemma}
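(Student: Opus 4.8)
The plan is to prove both estimates by reducing to the local limit theorem (\refL{LLT}), treating the ratio $\PP(S_{n-k}=n-k)/\PP(S_n=n-1)$ as the ratio of two values of an approximate Gaussian density. The key point is that the local limit theorem \eqref{llt} gives, uniformly, $\PP(S_m=m')=\frac{h}{\sqrt{2\pi\gss m}}\bigpar{e^{-(m'-m)^2/(2m\gss)}+o(1)}$, and both numerator and denominator here have their ``argument'' very close to the mean: for $\PP(S_n=n-1)$ the deviation $m'-m$ is $-1$, so the exponential is $e^{-1/(2n\gss)}=1+O(1/n)$, and for $\PP(S_{n-k}=n-k)$ the deviation is $0$, so the exponential is exactly $1$.

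For part (i), with $1\le k\le n/2$, I would write
\begin{equation*}
\frac{\PP\xpar{S_{n-k}=n-k}}{\PP(S_n=n-1)}
=\frac{\frac{h}{\sqrt{2\pi\gss (n-k)}}\bigpar{1+o(1)}}
      {\frac{h}{\sqrt{2\pi\gss n}}\bigpar{e^{-1/(2n\gss)}+o(1)}}
=\sqrt{\frac{n}{n-k}}\cdot\frac{1+o(1)}{1+O(1/n)+o(1)}.
\end{equation*}
Here $o(1)$ means (uniformly in the allowed range of $k$) a term tending to $0$ as $\ntoo$; since $n-k\ge n/2\to\infty$, the local limit theorem applies uniformly to the numerator as well. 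The prefactor $\sqrt{n/(n-k)}=\bigpar{1-k/n}\qqw=1+O(k/n)$ since $k/n\le 1/2$. The remaining factor is $1+o(1)$, and a little care with the $o(1)$'s — the denominator's $o(1)$ multiplied through — turns the whole correction into $1+O(k/n)+o(n\qqw)$; the $o(n\qqw)$ absorbs the error coming from $\PP(S_n=n-1)\asymp n\qqw$ in the denominator, i.e. an additive $o(n\qqw)$ in $\PP(S_n=n-1)$ becomes a relative error $o(1)$, and one has to be slightly careful that this is claimed as $o(n\qqw)$ rather than $o(1)$ — this is where I expect to lean on the sharper uniform form of \eqref{llt} and possibly on the stronger statement that the $o(1)$ in \eqref{llt} is actually $o(n\qqw)\cdot\sqrt n$, i.e.\ genuinely small; if only the stated form is available, I would instead split into the regime $k=O(\sqrt n)$ (where $O(k/n)$ already dominates) and $k\gg\sqrt n$ (where $O(k/n)$ swamps any $o(n\qqw)$), so the claimed bound holds in either case.

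For part (ii), with $n/2<k\le n$, we no longer have $n-k\to\infty$, so the local limit theorem does not apply to the numerator; but now the numerator only needs an upper bound. I would use the trivial bound $\PP(S_{n-k}=n-k)\le \CC (n-k+1)\qqw$, valid for all $m=n-k\ge 0$: for $m\ge 1$ this follows from \eqref{llt} (uniform, so $\PP(S_m=m')\le \CC m\qqw$ for all $m'$), and for $m=0$ we have $\PP(S_0=0)=1=(0+1)\qqw$, so writing $m+1=n-k+1$ handles the boundary uniformly. Combining with $\PP(S_n=n-1)\sim \frac{h}{\sqrt{2\pi\gss}}n\qqw$ from \eqref{snn}, hence $\PP(S_n=n-1)\ge \cc n\qqw$ for large $n$, gives
\begin{equation*}
\frac{\PP\xpar{S_{n-k}=n-k}}{\PP(S_n=n-1)}\le \frac{\CC (n-k+1)\qqw}{\cc\, n\qqw}
=O\parfrac{n\qq}{(n-k+1)\qq},
\end{equation*}
which is \eqref{l2b}. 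The main obstacle, as flagged above, is bookkeeping the error terms in part (i) precisely enough to land on $o(n\qqw)$ rather than a weaker $o(1)$; this is purely a matter of how the $o(1)$ in \refL{LLT} is quantified, and splitting the range of $k$ at $\sqrt n$ is a safe fallback that makes the claim go through with only the stated form of the local limit theorem.
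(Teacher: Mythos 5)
Your part (ii) is correct and is exactly the paper's argument: the uniform local limit theorem gives $\PP(S_{n-k}=n-k)=O\bigpar{(n-k+1)\qqw}$ and \eqref{snn} gives $\PP(S_n=n-1)\gg n\qqw$.

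Part (i), however, has a genuine gap, and it is precisely the one the paper flags just before its proof (``If $\xi$ has a finite third moment, this follows easily from the refined local limit theorem \dots{} Since we do not assume this, we have to work harder and take advantage of some cancellation''). \refL{LLT} controls the \emph{absolute} error in $\PP(S_n=m)$ only to $o(n\qqw)$, uniformly in $m$; since $\PP(S_n=n-1)\asymp n\qqw$, this translates into a \emph{relative} error that is merely $o(1)$, with no rate. Writing the ratio as $\sqrt{n/(n-k)}\,(1+\eps_1)/(1+\eps_2)$ with $\eps_1,\eps_2=o(1)$, you would need $\eps_1-\eps_2=O(k/n)+o(n\qqw)$, and nothing in the stated form of \refL{LLT} forces the two $o(1)$ errors to agree to that precision. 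Your fallback of splitting at $k\asymp\sqrt n$ does not repair this: for $k=O(\sqrt n)$ the claimed bound is $1+O(n\qqw)$ while your argument yields only $1+o(1)$, and for $\sqrt n\ll k\le n/2$ an LLT error like $1/\log n$ is not swamped by $O(k/n)$ either. The sharper error term is not cosmetic; it is exactly what makes the $o(\sqrt n)$ in \eqref{t1e} come out.

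The paper's proof therefore bypasses the LLT for part (i) and estimates the difference of probabilities by Fourier inversion. It splits $\PP(S_{n-k}=n-k)-\PP(S_n=n-1)$ into two pieces: first $\PP(S_{n-k}=n-k)-\PP(S_{n-1}=n-1)$, which is $O\bigpar{kn\qqcw}$ because the integrand carries the factor $1-\tgf^{k-1}(t)=O(kt^2)$; and second $\PP(S_{n-1}=n-1)-\PP(S_{n}=n-1)$, whose leading contribution is $\intpipix t\,\tgf^{n-1}(t)\dd t$. After the substitution $t=x/\sqrt n$ this integral, times $n$, converges to $\intoooo xe^{-\gss x^2/2}\dd x=0$ by symmetry, and this cancellation is what upgrades the second difference to $o(n\qw)$, hence $o(n\qqw)$ after dividing by $\PP(S_n=n-1)$. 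Some version of this characteristic-function computation (or an extra moment assumption, as in \refL{L23}) is unavoidable here.
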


If $\xi$ has a finite third moment, this follows easily from
the refined local limit theorem in \cite[Theorem VII.13]{Petrov}.
Since we do not assume this, we have to work harder and take advantage of
some cancellation.

\begin{proof}
\pfitem{i}
We let $\gf(t)\=\E e^{\ii t\xi}$ be the \chf{} of $\xi$, and
  $\tgf(t)\=e^{-\ii t}\gf(t)$ the \chf{} of the centred variable
  $\txi:=\xi-\E\xi=\xi-1$. 

We begin with a standard estimate.
Since $\E\txi=0$ and $\Var(\txi)=\gss<\infty$, we have
\begin{equation}\label{tgf}
  \tgf(t)=1-\tfrac12\gss t^2+o(t^2)
\qquad\text{as $|t|\to0$}.
\end{equation}
It follows that $|\gf(t)|=|\tgf(t)|<e^{-\gss t^2/3}$ for 
$|t|\le \cc \ccdef\cca$.
Furthermore,  assuming that $\xi$ has span $h=1$, $|\gf(t)|<1$ for
$0<|t|\le\pi$, so by compactness, $|\gf(t)|\le 1-\cc$ for 
$\cca\le|t|\le\pi$. It follows that
\begin{equation}\label{ccgf}
  |\gf(t)|=|\tgf(t)|\le e^{-\cc t^2},
\qquad |t|\le\pi.
\ccdef\ccgf
\end{equation}

To estimate the ratio in \eqref{l2}, we note first that 
by \eqref{snn}, it suffices to estimate the difference
$\PP(S_{n-k}=n-k)-\PP(S_{n}=n-1)$. We do this in two steps.

First, consider the difference $\PP(S_{n-k}=n-k)-\PP(S_{n-1}=n-1)$.
By Fourier inversion,
	\begin{multline}
\label{ada}
\PP(S_{n-k}=n-k)-\PP(S_{n-1}=n-1)
=
\intpipi\Bigpar{\tgf^{n-k}(t)-\tgf^{n-1}(t)}\dd t
\\
=\intpipi\Bigpar{1-\tgf^{k-1}(t)}\tgf^{n-k}(t)\dd t	
.	\end{multline}
By \eqref{tgf}, $\tgf(t)=1+O(t^2)$, and thus, using also $|\tgf(t)|\le1$,
for all $j\ge0$ and $t\in\bbR$,
\begin{equation}
  \label{midsommar}
|\tgf^{j}(t)-1|=O(jt^2).
\end{equation}
Furthermore, by \eqref{ccgf} and $k\le n/2$, for $|t|\le\pi$,
\begin{equation}
  \label{johd}
|\tgf^{n-k}(t)|\le \exp\xpar{-\ccgf(n-k)t^2} \le \exp\xpar{-\cc nt^2}.
\ccdef\ccjohd
\end{equation}
Consequently, \eqref{ada} yields
\begin{equation}\label{jw}
  \begin{split}
\bigabs{\PP(S_{n-k}=n-k)&-\PP(S_{n-1}=n-1)}
\\
&\ll
\intpipi  kt^2 \xexp(-\ccx n t^2)\dd t
\ll k n^{-3/2}  
.	
  \end{split}
\end{equation}

Next, 
consider  $\PP(S_{n-1}=n-1)-\PP(S_{n}=n-1)$.
By Fourier inversion,
\begin{equation}\label{sw}
  \begin{split}
&\PP(S_{n-1}=n-1)-\PP(S_{n}=n-1)
=
\intpipi e^{-\ii(n-1)t}\Bigpar{\gf^{n-1}(t)-\gf^{n}(t)}\dd t
\\&\qquad
=\intpipi\Bigpar{1-\gf(t)}\tgf^{n-1}(t)\dd t	
\\&\qquad
=\frac{-\ii}{2\pi}\intpipix{t}\tgf^{n-1}(t)\dd t	
+\intpipi\Bigpar{1+\ii t-\gf(t)}\tgf^{n-1}(t)\dd t	
.	
  \end{split}
\raisetag{1.2\baselineskip}
\end{equation}
Since $\gf(t)=1+\ii t+O(t^2)$, the second integral in \eqref{sw} is
$O(n^{-3/2})$ by the argument in \eqref{jw}.
For the first integral, we make the change of variable $t=x/\sqrt n$:
\begin{equation}\label{maa}
\intpipix t \tgf^{n-1}(t)\dd t
=
\frac{1}n\int_{-\pi\sqrt n}^{\pi\sqrt n} x \tgf^{n-1}\parfrac{x}{\sqrt n}\dd x
.
\end{equation}
We have $\tgf^{n-1}(x/\sqrt n)\to e^{-\gss x^2/2}$ as \ntoo{} for every $x$
by \eqref{tgf},
and thus by dominated convergence (justified by \eqref{ccgf}),
\begin{equation}\label{mab}
  \begin{split}
\int_{-\pi\sqrt n}^{\pi\sqrt n} x \tgf^{n-1}\parfrac{x}{\sqrt n}\dd x
\to
\intoooo x e^{-\gss x^2/2}\dd x =0.
  \end{split}
\end{equation}
Consequently, the expressions in \eqref{maa} are $o(1/n)$,
and
\eqref{sw} yields
\begin{equation}
\PP(S_{n-1}=n-1)-\PP(S_{n}=n-1)=o\bigpar{n^{-1}}.
\end{equation}
This and \eqref{jw} yield, together with \eqref{snn},
\begin{equation}
  \frac{|\PP(S_{n-k}=n-k)-\PP(S_{n}=n-1)|}{\PP(S_n=n-1)}
\ll \frac{ k n^{-3/2}+o(n\qw)}{n\qqw},
\end{equation}
and the result follows.

\pfitem{ii}
We use \eqref{snn} together with the similar estimate,
also from \refL{LLT},
$\PP(S_{n-k}=n-k)=O\bigpar{(n-k+1)\qqw}$.
\end{proof}

\begin{proof}[Proof of \refT{T1}\ref{T1e}]
Let $a_k\=|\E f(\ctk)|=|\E f_k(\ctk)|$; thus by assumption $a_k\to0$ as \ktoo.
Moreover, by \eqref{efkk2} and \eqref{pett}, 
\begin{equation}\label{chua}
  |\E f_k(\cT)|={|\E f_k(\ctk)|\PP(|\cT|=k)}
=a_k \PP(|\cT|=k)
=O\bigpar{a_k k^{-3/2}}.
\end{equation}

By \eqref{Fsum} and \refL{Lefkn},
\begin{equation}\label{chu}
  \begin{split}
&  \frac1n\E F(\ctn) -\E f(\cT)
=
\sumki \Bigpar{\frac{1}n\E F_k(\ctn)-\E f_k(\cT)}	
\\& \qquad
=
\sumkn\lrpar{\frac{\PP\xpar{S_{n-k}=n-k}}{\PP(S_n=n-1)}-1} \E{f_k(\cT)}
-\sum_{k=n+1}^\infty \E{f_k(\cT)}.
  \end{split}
\end{equation}
We split the expression in \eqref{chu}
 into three parts. First, for $k\le n/2$ we use \refL{L2}(i) and
obtain, using \eqref{chua} and $a_k\to0$ as \ktoo,
\begin{equation}\label{chux}
  \begin{split}
\sum_{k\le n/2}\lrabs{\frac{\PP\xpar{S_{n-k}=n-k}}{\PP(S_n=n-1)}-1}
\bigabs{\E{f_k(\cT)}}
\ll
\sum_{k\le n/2}\lrpar{\frac{k}n+o\bigpar{n\qqw}} a_k k\qqcw
\\
\ll n\qw \sum_{k\le n}  a_k k\qqw + o\bigpar{n\qqw}
=o\bigpar{n\qqw}.
  \end{split}
\end{equation}
For $n/2<k\le n$, we use \refL{L2}(ii),
yielding
\begin{equation}\label{chuy}
  \begin{split}
\sum_{n/2<k\le n}\lrabs{\frac{\PP\xpar{S_{n-k}=n-k}}{\PP(S_n=n-1)}-1}
\bigabs{\E{f_k(\cT)}}
\ll
\sum_{n/2<k\le n}\frac{n\qq}{(n-k+1)\qq} a_k k\qqcw
\\
\ll 
 n\qw \max _{k\ge n/2} a_k\sum_{n/2<k\le n}\frac{1}{(n-k+1)\qq} 
=o\bigpar{n\qqw}.
  \end{split}
\end{equation}
Finally, for $k>n$ we have by \eqref{chua}
\begin{equation}\label{chuz}
  \begin{split}
\sum_{k>n}\bigabs{\E{f_k(\cT)}}
\ll
 \max _{k> n} a_k\sum_{k> n} k\qqcw
=o(1)\cdot 
\sum_{k> n} k\qqcw
=o\bigpar{n\qqw}.
  \end{split}
\end{equation}
The result follows by \eqref{chu}--\eqref{chuz}.
\end{proof}

\begin{remark} 
  \label{Rp}
Trivial modifications in the proof above show that if $\E|f(\cT)|<\infty$
and $|\E f(\ctk)|=o(k\qq)$, then  
$\E F(\ctn) =n\mu+o(n)$, so \eqref{t2af} holds.
Moreover, the quenched version \eqref{t2qf} holds too; this follows easily
from \eqref{t2af} together with \eqref{t2qf} applied to truncations of $f$.
(We omit the details.)
\end{remark}

If we assume further moment conditions on $\xi$, we can improve 
the error term in \eqref{l2} and thus in \eqref{t1e}.
(Cf.\ \citet[Theorem 4]{Zabell}.)

\begin{lemma}\label{L23}
If\/ $\E\xi^{2+\gd}<\infty$ with $0<\gd\le1$, then,
uniformly for all $k$ and $n$ with\/ $1\le k\le n/2$,
\begin{equation}\label{l23}
  \begin{split}
\frac{\PP\xpar{S_{n-k}=n-k}}{\PP(S_n=n-1)} 
= 1+ O\parfrac{k}{n}
+O\bigpar{n^{-(1+\gd)/2}}.
  \end{split}
\end{equation}
\end{lemma}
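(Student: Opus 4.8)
The plan is to run the proof of \refL{L2}(i) again, sharpening the estimates at the one place where the extra integrability of $\xi$ pays off. As there, by \eqref{snn} it suffices to bound $\bigabs{\PP(S_{n-k}=n-k)-\PP(S_n=n-1)}$ and divide by $\PP(S_n=n-1)\asymp n\qqw$; concretely I will prove
\begin{equation*}
  \bigabs{\PP(S_{n-k}=n-k)-\PP(S_n=n-1)}\ll k\,n\qqcw+n^{-(2+\gd)/2},
\end{equation*}
which upon dividing by $\PP(S_n=n-1)\asymp n\qqw$ gives the relative error $O(k/n)+O(n^{-(1+\gd)/2})$ of \eqref{l23} (here $\gd\le1$ is used, via $n\qqcw=O(n^{-(2+\gd)/2})$). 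The new analytic ingredient is the sharper Taylor estimate coming from $\E|\txi|^{2+\gd}<\infty$: the elementary bound $\bigabs{e^{\ii x}-1-\ii x+\tfrac12x^2}\le\min\bigpar{\tfrac16|x|^3,x^2}\le|x|^{2+\gd}$ (valid for $0<\gd\le1$) gives, after taking expectations,
\begin{equation*}
  \tgf(t)=1-\tfrac12\gss t^2+O\bigpar{|t|^{2+\gd}}\qquad\text{as }|t|\to0,
\end{equation*}
hence $\log\tgf(t)=-\tfrac12\gss t^2+O(|t|^{2+\gd})$ for $|t|$ small, and in particular the phase $\theta(t):=\arg\tgf(t)=\Im\log\tgf(t)=O\bigpar{|t|^{2+\gd}}$.

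Exactly as in \eqref{ada}--\eqref{sw} I split through $\PP(S_{n-1}=n-1)$. The first part, $\PP(S_{n-k}=n-k)-\PP(S_{n-1}=n-1)=\intpipi\bigpar{1-\tgf^{k-1}(t)}\tgf^{n-k}(t)\dd t$, is $\ll k\,n\qqcw$ by the unchanged computation of \eqref{jw} (only $\tgf(t)=1+O(t^2)$ and $k\le n/2$ are used, and this is what produces the $O(k/n)$ term). For the second part, \eqref{sw} gives
\begin{equation*}
  \PP(S_{n-1}=n-1)-\PP(S_n=n-1)=\frac{-\ii}{2\pi}\intpipix t\,\tgf^{n-1}(t)\dd t+\intpipi\bigpar{1+\ii t-\gf(t)}\tgf^{n-1}(t)\dd t,
\end{equation*}
and the last integral is $O(n\qqcw)$ by the argument of \eqref{jw} applied to $|1+\ii t-\gf(t)|=O(t^2)$ (again only the second moment is needed). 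Thus everything reduces to a quantitative bound on $\intpipix t\,\tgf^{n-1}(t)\dd t$; in \refL{L2}(i) this integral was handled softly (dominated convergence, giving merely $o(1/n)$), and it is precisely here that a genuine rate must be supplied.

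For this, $\tgf(-t)=\overline{\tgf(t)}$ gives $\intpipix t\,\tgf^{n-1}(t)\dd t=2\ii\int_0^\pi t\,\Im\tgf^{n-1}(t)\dd t$. On a fixed neighbourhood $|t|\le t_0$ write $\tgf(t)=r(t)e^{\ii\theta(t)}$ with $r(t)=|\tgf(t)|\le e^{-ct^2}$ by \eqref{ccgf} and $\theta(t)=O(|t|^{2+\gd})$; then
\begin{equation*}
  \bigabs{\Im\tgf^{n-1}(t)}=r(t)^{n-1}\bigabs{\sin\bigpar{(n-1)\theta(t)}}\le e^{-c(n-1)t^2}(n-1)\bigabs{\theta(t)}\ll n\,|t|^{2+\gd}e^{-cnt^2},
\end{equation*}
where the inequality $|\sin x|\le|x|$ automatically covers the crossover near $|t|\sim n\qqw$. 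For $t_0\le|t|\le\pi$ one has $|\tgf(t)|^{n-1}\le e^{-cn}$, which is negligible. Hence
\begin{equation*}
  \Bigabs{\intpipix t\,\tgf^{n-1}(t)\dd t}\ll n\intoo t^{3+\gd}e^{-cnt^2}\dd t+O(e^{-cn})\ll n\cdot n^{-(4+\gd)/2}=n^{-(2+\gd)/2}.
\end{equation*}
Combining the three pieces gives the displayed bound on $\bigabs{\PP(S_{n-k}=n-k)-\PP(S_n=n-1)}$, and dividing by $\PP(S_n=n-1)\asymp n\qqw$ yields \eqref{l23}.

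The main obstacle is exactly the estimate in the previous paragraph: \refL{L2}(i) could afford to treat $\intpipix t\,\tgf^{n-1}(t)\dd t$ by dominated convergence (its rescaled integrand tends to the odd function $xe^{-\gss x^2/2}$, which integrates to $0$), but here a rate is needed. This forces control of the phase $\arg\tgf(t)$ — which is exactly what the $(2+\gd)$-th moment buys through the sharper expansion above — together with honest bookkeeping across the region $|t|\sim n\qqw$, done here via $|\sin x|\le\min(1,|x|)$. Everything else is a line-by-line repetition of estimates already carried out for \refL{L2}; cf.\ also \citet[Theorem~4]{Zabell}.
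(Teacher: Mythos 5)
Your proof is correct and follows essentially the same route as the paper: the same splitting through $\PP(S_{n-1}=n-1)$, the same reduction to the oscillatory integral $\intpipix t\,\tgf^{n-1}(t)\dd t$, and the same use of the $(2+\gd)$-moment via the refined expansion $\tgf(t)=1-\tfrac12\gss t^2+O\bigpar{|t|^{2+\gd}}$. The only (minor) difference is how the cancellation in that integral is extracted: the paper rescales and expands $\tgf^{n-1}(x/\sqrt n)$ multiplicatively around $e^{-\gss x^2/2}$, whose odd main term integrates to zero over the symmetric range, whereas you take imaginary parts and bound the phase $\arg\tgf(t)=O\bigpar{|t|^{2+\gd}}$ --- both yield the same $O\bigpar{n^{-(2+\gd)/2}}$ bound and hence \eqref{l23}.
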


\begin{proof}
This follows by minor modifications in the proof of \refL{L2}(i).
We now have
\begin{equation}\label{tgf3}
  \tgf(t)=1-\tfrac12\gss t^2+O(|t|^{2+\gd})
\end{equation}
which leads to
\begin{equation}
x \tgf^{n-1}\parfrac{x}{\sqrt n}
= x e^{-\gss x^2/2}
\lrpar{1+O\parfrac{|x|^{2+\gd}}{n^{\gd/2}} +O\parfrac{x^2}n},  
\end{equation}
for $|x|\le n^{\gd/6}$, at least.
It follows 
(using \eqref{johd} for $x>n^{\gd/6}$)
that the first integral in \eqref{mab} is $O\bigpar{n^{-\gd/2}}$.
The rest is as before.
\end{proof}

\begin{theorem}
  \label{T13}
Suppose, in addition to the assumptions of \refT{T1}\ref{T1e}, that
$0<\gd<1$ and that
$\E\xi^{2+\gd}<\infty$ 
and $\E f(\ctn)=O(n^{-\gd/2})$. 
Then
\begin{equation}\label{t1e3gd}
\E F(\ctn) 
=n\mu+O\bigpar{n^{(1-\gd)/2}}.  
\end{equation}
Similarly,
if $\E\xi^3<\infty$,
$\E f(\ctn)=O(n\qqw)$ and $\sumni |\E f(\ctn)|n\qqw<\infty$,
then
\begin{equation}\label{t1e3}
\E F(\ctn) 
=n\mu+O(1).  
\end{equation}
\end{theorem}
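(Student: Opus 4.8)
The plan is to reuse the proof of \refT{T1}\ref{T1e} essentially verbatim, the only substantive change being to feed in the sharper local-limit ratio estimate of \refL{L23} in place of \refL{L2}(i) on the range $1\le k\le n/2$, and to quantify the decay of $a_k:=|\E f(\ctk)|$ by means of the hypothesis on $\E f(\ctn)$. Concretely, I would start from the exact identity \eqref{chu}, which writes $\frac1n\E F(\ctn)-\mu$ as $\sumkn\bigpar{\PP(S_{n-k}=n-k)/\PP(S_n=n-1)-1}\E f_k(\cT)-\sum_{k>n}\E f_k(\cT)$, and split the range into $k\le n/2$, $n/2<k\le n$ and $k>n$, exactly as in \eqref{chux}--\eqref{chuz}. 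Throughout I keep using \eqref{chua}, namely $|\E f_k(\cT)|\ll a_k k\qqcw$, together with the fact (\cf{} \refR{RT1a}) that $\E|f(\cT)|<\infty$ already gives $\sum_k a_k k\qqcw<\infty$.

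For the first assertion, the hypothesis $\E f(\ctn)=O(n^{-\gd/2})$ gives $a_k=O(k^{-\gd/2})$. On $k\le n/2$, \refL{L23} contributes two pieces. The $O(k/n)$ term yields $\frac1n\sum_{k\le n/2}a_k k\qqw\ll\frac1n\sum_{k\le n/2}k^{-(1+\gd)/2}\ll n^{-(1+\gd)/2}$; here the assumption $\gd<1$ is used, since it makes the exponent $-(1+\gd)/2$ larger than $-1$, so that the partial sum is $O\bigpar{n^{(1-\gd)/2}}$. The $O\bigpar{n^{-(1+\gd)/2}}$ term yields $n^{-(1+\gd)/2}\sum_{k\le n/2}a_k k\qqcw\ll n^{-(1+\gd)/2}$, the series being convergent. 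On $n/2<k\le n$ I use \refL{L2}(ii) (which needs no extra moment): with $a_k=O(n^{-\gd/2})$ on this range and $\sum_{n/2<k\le n}(n-k+1)\qqw=O(n\qq)$, the contribution is $\ll n^{-\gd/2}\cdot n\qq\cdot n\qqcw\cdot n\qq=n^{-(1+\gd)/2}$. On $k>n$, $\sum_{k>n}a_k k\qqcw\ll\sum_{k>n}k^{-(3+\gd)/2}\ll n^{-(1+\gd)/2}$. Adding the three pieces gives $\frac1n\E F(\ctn)-\mu=O\bigpar{n^{-(1+\gd)/2}}$, which is \eqref{t1e3gd}.

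For the second assertion, $\E\xi^3<\infty$ lets me apply \refL{L23} with $\gd=1$, so the ratio in \eqref{chu} is $1+O(k/n)+O(n\qw)$, and now $a_k=O(k\qqw)$ with the additional summability $\sum_k a_k k\qqw<\infty$ (this is exactly the hypothesis $\sum_n|\E f(\ctn)|n\qqw<\infty$). The only piece of the preceding estimate that degenerates at $\gd=1$ is $\frac1n\sum_{k\le n/2}a_k k\qqw$, which at $\gd<1$ was bounded via the convergent-tail argument but now is simply $\frac1n\sum_{k\le n/2}(a_k k\qqw)\le n\qw\sum_k a_k k\qqw=O(n\qw)$, precisely by that extra hypothesis. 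All the remaining pieces are $O(n\qw)$ by the same computations as above specialised to $\gd=1$ (using $a_k=O(k\qqw)$ on $k>n/2$ and on $k>n$, and $\sum_k a_k k\qqcw<\infty$). Hence $\frac1n\E F(\ctn)-\mu=O(n\qw)$, i.e.\ $\E F(\ctn)=n\mu+O(1)$, which is \eqref{t1e3}.

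I do not expect a genuine obstacle: the hard analytic step — the refined local-limit estimate under only a $(2+\gd)$-th moment, obtained by exploiting the cancellation $\intoooo x e^{-\gss x^2/2}\dd x=0$ — has already been carried out in \refL{L23}, and what remains is the bookkeeping of matching the decay hypotheses on $\E f(\ctn)$ to the convergence rates of the three sums. The one point that requires attention is the borderline exponent, which is exactly why the two cases are separated: for $\gd<1$ the term $\frac1n\sum_{k\le n/2}a_k k\qqw$ converges at rate $n^{-(1+\gd)/2}$ on its own, whereas for $\gd=1$ it does so only under the extra summability condition, so the $\E\xi^3$ case must be stated with that condition added.
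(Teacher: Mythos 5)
Your proposal is correct and is exactly the argument the paper intends: the paper's proof of this theorem is the single sentence ``As the proof of Theorem~\ref{T1}(i) above, using \eqref{l23} and the assumptions on $\E f(\ctn)$,'' and your write-up supplies precisely the omitted bookkeeping (the three ranges $k\le n/2$, $n/2<k\le n$, $k>n$, with \refL{L23} replacing \refL{L2}(i) on the first range). The exponent arithmetic in all cases, including the borderline role of the extra summability hypothesis when $\gd=1$, checks out.
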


\begin{proof}
  As the proof of \refT{T1}\ref{T1e} above, using \eqref{l23} and the
  assumptions on $\E f(\ctn)$. We omit the details.
\end{proof}

\begin{remark}\label{R23}
  In fact, if $\E\xi^3<\infty$,
by including the next terms explicitly in the calculations in the
  proof of \refL{L2}(i), it is easily shown that for every fixed $k$, as \ntoo,
\begin{equation}\label{r23}
  \begin{split}
\frac{\PP\xpar{S_{n-k}=n-k}}{\PP(S_n=n-1)} 
= 1+ \frac{1}{2n}\bigpar{k+\gs\qww-\gk_3\gs^{-4}}+o\bigpar{n\qw},
  \end{split}
\end{equation}
where $\gk_3=\E(\xi-\E\xi)^3$ is the third cumulant of $\xi$.
(If $\E\xi^4<\infty$, this also follows easily from 
\cite[Theorem VII.13]{Petrov}.)
Hence, if for simplicity $f$ has finite support,
\eqref{chu} yields, with $\mu:=\E f(\cT)$ as above,
\begin{equation}\label{r23b}
  \begin{split}
\E F(\ctn) 
=n\mu
+
 \tfrac{1}{2}\E\bigpar{|\cT|f(\cT)}
+\tfrac12\bigpar{\gs\qww-\gk_3\gs^{-4}}\mu
+o(1).
  \end{split}
\end{equation}
We leave it to the reader to find more general conditions on $f$ for
\eqref{r23b} to hold.
\end{remark}

The following  example (adapted from \cite{Zabell}) shows that 
the sharper results in \refL{L23} and \refT{T13} do not hold without
the extra moment assumption on $\xi$.

\begin{example}
This is a discrete version of \cite[Examples 5--6]{Zabell}.
  Consider, as in \refE{Eleaves}, $f(T)=\ett{|T|=1}$.
Suppose that $p_k=\PP(\xi=k)=ak^{-\ga}$ for $k\ge2$ for some $a>0$ and
$\ga\in(3,4)$. (With $p_0,p_1$ adjusted so that $\sum_k p_k=1$ and
$\E\xi=1$; this is obviously
possible if $a$ is small.)
Then $\E\xi^r<\infty\iff r<\ga-1$; in particular, $\E\xi^2<\infty$ but
$\E\xi^3=\infty$.
It can be verified that
\begin{equation}
  \gf(t) =1+ \ii t -\tfrac12\E\xi^2 t^2 + a\gG(1-\ga)(-\ii t)^{\ga-1} + O(t^3),
\end{equation}
see \eg{} \cite[25.12.12]{NIST} or \cite[Theorem VI.7]{FlajoletS}.
It follows that, for $|t|\le \cc$,
\begin{equation}
\log \tgf(t) =  -\tfrac12\gss t^2 + a\gG(1-\ga)(-\ii t)^{\ga-1} + O(t^3),
\end{equation}
and hence, for $|x|\le n^{(\ga-3)/6}$, by a simple calculation,
\begin{multline*}
 \tgf^{n-1}\parfrac{x}{\sqrt n} =  
e^{-\gss x^2/2}\biggl(1 + a\gG(1-\ga)(-\ii x)^{\ga-1}n^{(3-\ga)/2} 
\\
+ O\Bigparfrac{x^2+|x|^3}{n\qq}+O\Bigparfrac{|x|^{2\ga-2}}{n^{\ga-3}}\biggr).
\end{multline*}
Using this in \eqref{maa}, it is easy to obtain
\begin{equation*}
  \begin{split}
-\ii&\intpipix t\tgf^{n-1}(t)\dd t
\\&
=\intoooo e^{-\gss x^2/2}\lrpar{\frac{-\ii x}n 
+ a\gG(1-\ga)(-\ii x)^{\ga}n^{(1-\ga)/2}}\dd x + O\bigpar{n\qqcw+n^{2-\ga}}
\\ &
= 2  a\gG(1-\ga)\cos\frac{\pi\ga}2 n^{(1-\ga)/2}\intoo x^\ga e^{-\gss x^2/2}
\dd x + O\bigpar{n\qqcw+n^{2-\ga}}
\\ &
= b n^{(1-\ga)/2} + o\bigpar{n^{(1-\ga)/2}}
  \end{split}
\end{equation*}
for some $b\neq0$.
Using this, instead of \eqref{maa}--\eqref{mab}, in the 
proof of \refL{L2}, leads to the estimate,
for some $c\neq0$,
\begin{equation}
  \begin{split}
\frac{\PP\xpar{S_{n-1}=n-1}}{\PP(S_n=n-1)} 
= 1+ c n^{1-\ga/2}+o\bigpar{n^{1-\ga/2}},
  \end{split}
\end{equation}
and thus by \refL{Lefkn}
\begin{equation}
  \begin{split}
\E F(\ctn) =np_0+ cp_0 n^{2-\ga/2}+o\bigpar{n^{2-\ga/2}},
  \end{split}
\end{equation}
showing that without further assumptions, the error term $o(n\qqw)$ in
 \refT{T1}\ref{T1e} is essentially best possible.

We can also take $\ga=4$ in this example; then 
\begin{equation}
  \gf(t) =1+ \ii t -\tfrac12\E\xi^2 t^2 + \tfrac{\ii}6 a t^{3}\log|t| + O(t^3),
\end{equation}
and similar calculations lead to, for some $c\neq0$,
\begin{equation}
  \begin{split}
\E F(\ctn) =np_0+ cp_0 \log n+O(1).
  \end{split}
\end{equation}
\end{example}

We end this section 
with a result on the expectation $\E f(\ctn)$ in the case of a local
functional $f$. We first state an estimate similar to \refL{L2} (but
somewhat coarser and simpler); it can be refined but the present version is
enough for our needs.
(If $\xi$ has a finite third moment, it too, and more, follows easily from
the refined local limit theorem in \cite[Theorem VII.13]{Petrov}.)

\begin{lemma}
  \label{Lzw}
For any integers $w,z\ge0$ with $z\le n/2$,
\begin{equation}
  \frac{\PP(S_{n-z}=n-z-w)}{\PP(S_{n}=n-1)}
= 1 + O\Bigpar{\frac{w+z+1}{n\qq}}.
\end{equation}
\end{lemma}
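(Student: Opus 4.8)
The plan is to telescope through the intermediate quantity $\PP(S_{n-z}=n-z)$, writing
\begin{equation*}
\frac{\PP(S_{n-z}=n-z-w)}{\PP(S_n=n-1)}-1
=\frac{\PP(S_{n-z}=n-z-w)-\PP(S_{n-z}=n-z)}{\PP(S_n=n-1)}
+\lrpar{\frac{\PP(S_{n-z}=n-z)}{\PP(S_n=n-1)}-1},
\end{equation*}
so that the second summand isolates the effect of changing the number of summands from $n$ to $n-z$, while the first isolates the effect of shifting the target value by $w$. I would bound the two summands separately.

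The second summand is precisely what \refL{L2}(i) controls: applied with $k=z$ (the case $z=0$ being trivial) it equals $O(z/n)+o(n\qqw)$ uniformly for $z\le n/2$, and since $z/n\le(z+1)\,n\qqw$ while the uniform $o(n\qqw)$ term is in particular $O(n\qqw)\le O\bigpar{(z+1)\,n\qqw}$, this summand is $O\bigpar{(z+1)\,n\qqw}$. For the first summand I would argue directly by Fourier inversion. Write $N:=n-z\ge n/2$ and recall the characteristic functions $\gf(t)=\E e^{\ii t\xi}$ and $\tgf(t)=e^{-\ii t}\gf(t)$ from the proof of \refL{L2}. Fourier inversion for integer-valued variables gives
\begin{equation*}
\PP(S_N=N-w)-\PP(S_N=N)=\intpipi\bigpar{e^{\ii wt}-1}\tgf^{N}(t)\dd t,
\end{equation*}
and using the crude bound $\abs{e^{\ii wt}-1}=2\abs{\sin(wt/2)}\le w\abs{t}$ together with $\abs{\tgf(t)}\le e^{-\ccgf t^2}$ from \eqref{ccgf}, I get $\abs{\PP(S_N=N-w)-\PP(S_N=N)}\ll w\int_{-\infty}^{\infty}\abs{t}\,e^{-\ccgf N t^2}\dd t\ll w/N\ll w/n$. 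Dividing by $\PP(S_n=n-1)$, which is of order $n\qqw$ by \eqref{snn}, shows the first summand is $O\bigpar{w\,n\qqw}$.

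Adding the two bounds gives $1+O\bigpar{(w+z+1)\,n\qqw}$, which is the claim. (The finitely many small $n$ not covered by the asymptotics \eqref{snn} are absorbed into the implied constant, since for such $n$ the ratio is at most the finite number $1/\PP(S_n=n-1)$; equivalently one reads the estimate as holding as \ntoo, consistent with the paper's conventions.) I do not expect a genuine obstacle here: the target error term is deliberately coarse, so --- in contrast with \refL{L2} --- no cancellation needs to be extracted from the Fourier integral, and the linear bound $\abs{e^{\ii wt}-1}\le w\abs{t}$ works for \emph{all} $w\ge0$, so there is no separate ``large $w$'' case to treat (if $w$ were large enough for $\PP(S_{n-z}=n-z-w)$ to be negligible, the asserted inequality would hold trivially in any case). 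The only mildly delicate points are the uniformity in $z\le n/2$ of the $o(n\qqw)$ coming from \refL{L2}(i), and the elementary reduction $1/N\le 2/n$.
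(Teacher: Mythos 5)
Your proof is correct and uses essentially the same method as the paper: Fourier inversion together with the crude bounds $|e^{\ii wt}-1|\ll w|t|$, $|\tgf^{z}(t)-1|\ll zt^2$ and the Gaussian-type decay \eqref{ccgf}, with no cancellation needed. The only (cosmetic) difference is organizational — the paper treats the whole difference $\PP(S_{n-z}=n-z-w)-\PP(S_n=n-1)$ in a single integral $\intpipi\tgf^{n-z}(t)\bigpar{e^{\ii wt}-\tgf^{z}(t)e^{\ii t}}\dd t$, whereas you telescope through $\PP(S_{n-z}=n-z)$ and reuse \refL{L2}(i) for that piece; both give the stated bound, and your remarks on uniformity in $z$ and on small $n$ are the right ones.
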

\begin{proof}
  We argue as in the proof of \refL{L2} and obtain,
recalling \eqref{midsommar} and \eqref{johd},
\begin{equation*}
  \begin{split}
&\PP(S_{n-z}=n-z-w)-\PP(S_{n}=n-1)
=
\intpipi\Bigpar{\tgf^{n-z}(t)e^{\ii wt}-\tgf^{n}(t)e^{\ii t}}\dd t
\\&\qquad
=
\intpipi\tgf^{n-z}(t)\Bigpar{e^{\ii wt}-\tgf^{z}(t)e^{\ii t}}\dd t
\\&\qquad
=
\intpipi\tgf^{n-z}(t)
\Bigpar{1+O\bigpar{w|t|}-\bigpar{1+O\bigpar{zt^2}+O\bigpar{|t|}}}\dd t
\\&\qquad
= \intpipix e^{-\ccjohd nt^2}
O\bigpar{w|t|+z|t|+|t|}\dd t
=O\Bigparfrac{w+z+1}n.
  \end{split}
\end{equation*}
The result follows by division by $\PP(S_n=n-1)$, using \eqref{snn}.
\end{proof}

Let $\hct$ be the \emph{size-biased} \GWt{} defined by \citet{Kesten},
see also \citet{AldousII}, \citet{AldousPitman},  \citet{LPP} and
\citet{SJ264}; this is a random infinite tree, whose 
distribution can be described
in terms of the truncations $\hct\MM$ by
\begin{equation}\label{cant}
  \PP(\hct\MM=T) =w_M(T)\PP(\cT\MM=T),\qquad T\in\st,
\end{equation}
where $w_M(T)$ denotes the number of nodes of depth 
$M$ in $T$. Then $\ctn\dto\hct$ as \ntoo{} in the appropriate (local)
topology, as shown by \citet{Kennedy} and \citet{AldousPitman}, see also 
\citet{SJ264} for details and generalizations. This means that
$\ctn\MM\dto\hct\MM$ for every fixed $M$.
If $f$ is a local functional with cut-off $M$, so $f(T)=f(T\MM)$
for every finite tree $T$, then we define $f$ also for the infinite tree
$\hct$ by $f(\hct):=f(\hct\MM)$. 
It follows that $f(\ctn)=f(\ctn\MM)\dto f(\hct\MM)=f(\hct)$, and thus,
if $f$ furthermore is bounded, that $\E f(\ctn)\to\E f(\hct)$.
We establish an upper bound on the rate of this convergence. (Note that we
do not impose any further moment condition on $\xi$ beyond finite variance.)

\begin{lemma}\label{Lfloc}
  If $f(T)$ is a bounded local functional on $\st$, then
  \begin{equation}
	\E f(\ctn) = \E f(\hct) + O\bigpar{n\qqw}.
  \end{equation}
\end{lemma}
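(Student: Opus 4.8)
The plan is to expand both expectations over the possible truncations of the tree at height $M$ (the cut-off of $f$) and compare them term by term. Since $f(\ctn)=f(\ctn\MM)$ and $f(\hct)=f(\hct\MM)$, and $\PP(\hct\MM=T)=w_M(T)\PP(\cT\MM=T)$ by \eqref{cant}, we have $\E f(\ctn)=\sum_T f(T)\PP(\ctn\MM=T)$ and $\E f(\hct)=\sum_T f(T)w_M(T)\PP(\cT\MM=T)$, both sums over finite trees $T$ of height $\le M$ and both absolutely convergent since $f$ is bounded and $\sum_T w_M(T)\PP(\cT\MM=T)=\E w_M(\cT)=1$ (using $\E\xi=1$). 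The key input is an exact formula for $\PP(\ctn\MM=T)$: conditioned on $\cT\MM=T$, the tree $\cT$ is $T$ with an independent copy of $\cT$ grafted at each of its $w:=w_M(T)$ nodes of depth $M$, so, writing $a:=|T|$, the event $|\cT|=n$ is exactly the event that these $w$ independent \GWt{s} have total size $n-a+w$; by the hitting-time formula for \GWf{s} (the $w$-tree analogue of \eqref{ptk}, see \cite{Otter} and \cite[Section~15]{SJ264}) this has probability $\frac{w}{n-a+w}\PP(S_{n-a+w}=n-a)$, while $\PP(\cT\MM=T)$ is the product of the offspring probabilities at the nodes of $T$ of depth $<M$. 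Dividing by $\PP(|\cT|=n)=\tfrac1n\PP(S_n=n-1)$ (again \eqref{ptk}) gives, for every $T$ with $w_M(T)\ge1$,
\begin{equation*}
  \PP(\ctn\MM=T)=w_M(T)\,\PP(\cT\MM=T)\cdot\frac{n}{n-a+w}\cdot\frac{\PP(S_{n-a+w}=n-a)}{\PP(S_n=n-1)} .
\end{equation*}
(Trees of height $<M$ have $w_M(T)=0$, hence contribute $0$ to $\E f(\hct)$, while $\PP(\ctn\MM=T)=0$ unless $a=n$; they are harmless and fall under the ``large'' case below.)

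I would then split at $a=|T|\le n/2$ and $a>n/2$. For the small trees, \refL{Lzw} applied with parameters $z=a-w$ and $w$ (so that $w+z+1=a+1$) gives $\PP(S_{n-a+w}=n-a)/\PP(S_n=n-1)=1+O\bigpar{(a+1)/\sqrt n}$, and the crude bounds $1\le n/(n-a+w)\le2$ with $n/(n-a+w)-1=O(a/n)=O\bigpar{(a+1)/\sqrt n}$ show that the product of the last two factors above is $1+O\bigpar{(a+1)/\sqrt n}$; hence the small-tree part of $\E f(\ctn)-\E f(\hct)$ is, up to a constant times $\|f\|_\infty$, at most
\begin{equation*}
  \frac1{\sqrt n}\sum_T(|T|+1)\,w_M(T)\,\PP(\cT\MM=T)=\frac1{\sqrt n}\,\E\bigsqpar{\bigpar{|\cT\MM|+1}\,w_M(\cT)} .
\end{equation*}
Here $\E\bigsqpar{|\cT\MM|\,w_M(\cT)}=\sum_{j=0}^M\E[Z_jZ_M]$, where $Z_j$ is the size of generation $j$; by the martingale property of the critical process $\E[Z_jZ_M]=\E Z_j^2=j\gss+1$, so this expectation is finite (depending only on $M$ and $\gss$), giving an $O(n\qqw)$ bound. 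For the large trees I would bound the two pieces separately using Markov's inequality: $\sum_{a>n/2}\PP(\ctn\MM=T)=\PP\bigpar{|\cT\MM|>n/2\mid|\cT|=n}\le\PP(|\cT\MM|>n/2)/\PP(|\cT|=n)$, and $\sum_{a>n/2}w_M(T)\PP(\cT\MM=T)\le\E\bigsqpar{|\cT\MM|\,\ett{|\cT\MM|>n/2}}\le\tfrac2n\E|\cT\MM|^2$; since $\E|\cT\MM|^2=\sum_{i,j\le M}\E[Z_iZ_j]<\infty$ (again only $\gss<\infty$ is used) and $\PP(|\cT|=n)\asymp n\qqcw$ by \eqref{pett}, both are $O(n\qqw)$. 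Adding these three estimates yields $\E f(\ctn)-\E f(\hct)=O(n\qqw)$.

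The main obstacle is the large-tree bound: the trivial estimate $\PP(|\cT\MM|>n/2,\;|\cT|=n)\le\PP(|\cT|=n)\asymp n\qqcw$ is useless here, since after dividing by $\PP(|\cT|=n)$ it only gives $O(1)$; one must genuinely exploit that $|\cT\MM|$ has a finite second moment, so that $\PP(|\cT\MM|>n/2)=O(n^{-2})$, and it is the product of this with $\PP(|\cT|=n)\qw\asymp n\qqc$ that produces the claimed $O(n\qqw)$ rate. The only other point needing care is to keep every error term in the small-tree sum \emph{linear} (not quadratic) in $|T|$ — this is why one uses $n/(n-a+w)\le2$ there rather than expanding that factor, a quadratic term would force $\E\bigsqpar{|\cT\MM|^2 w_M(\cT)}<\infty$, which need not hold without a third moment on $\xi$.
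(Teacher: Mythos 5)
Your proof is correct and follows essentially the same route as the paper: condition on $\cT\MM=T$, use the Dwass/Otter forest formula \eqref{cant1} to get the exact expression for $\PP(\ctn\MM=T)$ as $\PP(\hct\MM=T)$ times a ratio of random-walk probabilities, control that ratio by \refL{Lzw} for $|T|\le n/2$, and dispose of the large trees by a tail bound. The one genuine (and welcome) deviation is in the large-tree step: the paper bounds $\PP\bigpar{|\ctn\MM|>n/2}\le 2n\qw\E|\ctn\MM|$ and then invokes the external estimate $\E w_j(\ctn)=O(j)$ for conditioned Galton--Watson trees (Meir--Moon, \cite{SJ167}), whereas you uncondition, writing $\PP\bigpar{|\ctn\MM|>n/2}\le \PP\bigpar{|\cT\MM|>n/2}/\PP(|\cT|=n)\ll n^{-2}\cdot n\qqc$, so that only the elementary second-moment computation $\E|\cT\MM|^2=\sum_{i,j\le M}\E Z_{i\wedge j}^2<\infty$ for the unconditioned critical process is needed. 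This makes your version self-contained under the standing hypothesis $\gss<\infty$, at no cost in the rate; your remaining moment input, $\E\bigsqpar{|\cT\MM|\,w_M(\cT)}=\sum_{j\le M}(j\gss+1)$, is the same quantity the paper writes as $\E|\hct\MM|$.
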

\begin{proof}
Let as above $M\ge1$ be the cut-off of $f$. Let
$T$ be a tree with height $\le M$ and condition on the event that 
$\cT\MM=T$. Then the rest of the tree, more precisely $\cT\setminus\cT\MMi$,
is a random forest consisting of $w:=w_M(T)$ independent copies of $\cT$;
denote this random forest by $\cF_w$.
By an extension of \eqref{ptk} due to \citet{Dwass}, see also 
\citet{Kemperman1,Kemperman2} and \citet{Pitman:enum}, 
\begin{equation}\label{cant1}
  \PP(|\cF_w|=n) =\frac{w}n\PP(S_n=n-w).
\end{equation}
Let $z_k=z_k(T):=\sum_{j=0}^k w_j(T)$, the number of nodes in the first $k$
generations of $T$.
Let further
$\ptmt:=\PP\bigpar{\cT\MM=T}$
and, using \eqref{cant}, $\hptmt:=\PP\bigpar{\hct\MM=T}=w\ptmt$.
It follows that, for $n\ge z_M$,
  \begin{equation*}
	\begin{split}
\PP\bigpar{\cT\MM=T\text{ and }	|\cT|=n}  
&=
\PP\bigpar{\cT\MM=T}\PP\bigpar{|\cF_w|=n-z_{M-1}}
\\&=
\ptmt\frac{w}{n-z_{M-1}}\PP\bigpar{S_{n-z_{M-1}}=n-z_{M-1}-w}
\\&=
\hptmt\frac{\PP\bigpar{S_{n-z_{M-1}}=n-z_{M-1}-w}}{n-z_{M-1}}.
	\end{split}
  \end{equation*}
Hence, recalling \eqref{ptk},
  \begin{equation*}
	\begin{split}
\PP(\ctn\MM=T)
&=\PP\bigpar{\cT\MM=T \mid |\cT|=n}  
\\&
=\frac{\PP\bigpar{\cT\MM=T\text{ and }|\cT|=n}}
{\PP(|\cT|=n)}
\\&
=\hptmt \frac{n}{n-z_{M-1}}
\frac{\PP\bigpar{S_{n-z_{M-1}}=n-z_{M-1}-w}}{\PP\bigpar{S_n=n-1}}
	\end{split}
  \end{equation*}
If $z_{M-1}\le n/2$, we thus obtain by \refL{Lzw},
  \begin{equation}\label{cant2}
	\begin{split}
\PP(\ctn\MM=T)
= \hptmt \Bigpar{1+O\Bigparfrac{z_{M-1}+w}{n\qq}}
= \hptmt \Bigpar{1+O\Bigparfrac{z_{M}}{n\qq}}.
	\end{split}
  \end{equation}
(Incidentally, this proves $\PP(\ctn\MM=T)\to \hptmt=\PP(\hct\MM=T)$, \ie,
  $\ctn\MM\dto\hct\MM$ as asserted above.)

Consequently, since $f$ is bounded, using \eqref{cant2} when $|T|\le n/2$,
\begin{equation}\label{cant3}
  \begin{split}
&\bigabs{\E f(\ctn)-\E f(\hct)}
=
\bigabs{\E f(\ctn\MM)-\E(\hct\MM)}
\\&\quad
=\Bigabs{\sum_{T} f(T)\PP(\ctn\MM=T)-\sum_Tf(T)\PP(\hct\MM=T)}
\\&\quad
\ll
\sum_T \bigabs{\PP(\ctn\MM=T)-\PP(\hct\MM=T)}
\\&\quad
\ll \sum_T \PP(\hct\MM=T) \frac{z_M(T)}{n\qq} 
+ \sum_{|T|>n/2}\bigpar{\PP(\ctn\MM=T)+\PP(\hct\MM=T)}
\\&\quad
= n\qqw \E|\hct\MM| + \PP\bigpar{|\ctn\MM|>n/2}
+ \PP\bigpar{|\hct\MM|>n/2}
\\&\quad
\ll n\qqw \E|\hct\MM| + n\qw \E|\ctn\MM|,
  \end{split}
\raisetag{\baselineskip}
\end{equation}
using Markov's inequality at the final step.
Finally, we observe that
\begin{equation}
\E|\ctn\MM|=\sum_{k=0}^M \E w_k(\ctn) =O(1)
\end{equation}
since $\E w_j(\ctk) = O(j)$ for each $j$, 
see \cite{MM} (assuming that $\xi$ has an exponential moment)
and \cite[Theorem 1.13]{SJ167} (general $\xi$ with $\E\xi^2<\infty$; in fact
it is shown that the estimate holds uniformly in $j$);
see also \cite{SJ250} for further results.
Similarly, or as a consequence,
\begin{equation}
\E|\hct\MM|=\sum_{k=0}^M \E w_k(\hct) =\sum_{k=0}^M \E w_k(\ct)^2  
<\infty.
\end{equation}
Hence \eqref{cant3} yields the estimate $O(n\qqw)$.
\end{proof}

\section{Variances and covariances}\label{Svar}

We next consider the variance of $F(\ctn)$. 
As in \refS{Sexp}, we consider first
the different $F_k(\ctn)$ separately; thus we study variances and
covariances of these sums.
We begin with an exact formula, corresponding to \refL{Lefkn}.

\begin{lemma}\label{Lcov}
If\/ $m\le k$ and $n\ge k+m-1$, then  
{\multlinegap=0pt
\begin{multline}\label{lcov}
\Cov( F_k(\ctn), F_m(\ctn))
=n\frac{\PP\xpar{S_{n-k}=n-k}}{\PP(S_n=n-1)} \E\bigpar{f_k(\cT) F_m(\cT)}
\\\shoveleft{\quad
-n(k+m-1)\frac{\PP\xpar{S_{n-k}=n-k}}{\PP(S_n=n-1)}\cdot
\frac{\PP\xpar{S_{n-m}=n-m}}{\PP(S_n=n-1)}
\E{f_k(\cT)}\E f_m(\cT)}
\\\shoveleft{\quad
+n(n-k-m+1)
\E{f_k(\cT)}\E{f_m(\cT)}
\cdot}
\\\lrpar{\frac{\PP\xpar{S_{n-k-m}=n-k-m+1}}{\PP(S_n=n-1)} 
-\frac{\PP\xpar{S_{n-k}=n-k}}{\PP(S_n=n-1)} 
\frac{\PP\xpar{S_{n-m}=n-m}}{\PP(S_n=n-1)}}.
\end{multline}}%

If\/ $m\le k\le n \le k+m$, we have instead
\begin{multline}\label{lcov2}
\Cov( F_k(\ctn), F_m(\ctn))
=n\frac{\PP\xpar{S_{n-k}=n-k}}{\PP(S_n=n-1)} \E\bigpar{f_k(\cT) F_m(\cT)}
\\
{}-n^2\frac{\PP\xpar{S_{n-k}=n-k}}{\PP(S_n=n-1)}\cdot
\frac{\PP\xpar{S_{n-m}=n-m}}{\PP(S_n=n-1)}
\E{f_k(\cT)}\E f_m(\cT).
\end{multline}
\end{lemma}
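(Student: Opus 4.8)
The strategy is to compute $\E\bigpar{F_k(\ctn)F_m(\ctn)}$ directly from the cyclic representation of the degree sequence and then subtract $\E F_k(\ctn)\,\E F_m(\ctn)$, which is provided by \refL{Lefkn}. Both $F_k$ and $F_m$ are cyclic-shift invariant functionals of the degree sequence via \eqref{Fc}, so, by the reasoning behind \eqref{Fd}, the pair $\bigpar{F_k(\ctn),F_m(\ctn)}$ has the same distribution as the pair of cyclic sums $\bigpar{\sum_{i=1}^n f_k(\xi_i,\dots,\xi_{i+k-1\bmod n}),\ \sum_{j=1}^n f_m(\xi_j,\dots,\xi_{j+m-1\bmod n})}$ conditioned on $\set{S_n=n-1}$. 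Multiplying out, taking expectations, and using the cyclic symmetry of $\cL(\xin\mid S_n=n-1)$ to reduce the first index to $i=1$ at the cost of a factor $n$, I obtain
\begin{equation*}
\E\bigpar{F_k(\ctn)F_m(\ctn)}=n\sum_{\ell=0}^{n-1}\E\Bigpar{f_k(\xi_1,\dots,\xi_k)\,f_m(\xi_{\ell+1},\dots,\xi_{\ell+m\bmod n})\bigm| S_n=n-1}.
\end{equation*}

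The heart of the matter, and the step I expect to be the main obstacle, is a combinatorial classification of the shifts $\ell$: I would show that the summand vanishes unless the cyclic block $\set{\ell+1,\dots,\ell+m}$ is contained in $\set{1,\dots,k}$ (the \emph{nested} shifts $0\le\ell\le k-m$) or disjoint from it (the \emph{disjoint} shifts $k\le\ell\le n-m$, which occur only when $n\ge k+m$). Indeed, $f_k(\xi_1,\dots,\xi_k)\neq0$ forces $(\xi_1,\dots,\xi_k)$ to be the degree sequence of a tree of order $k$, so $\sum_{i=1}^k\xi_i=k-1$ while $\sum_{i=1}^j\xi_i\ge j$ for $j<k$ by \eqref{ld}; for any remaining shift---either a block that sticks out of $\set{1,\dots,k}$ on the right, or a wrapped block, which necessarily meets $\set{1,\dots,k}$---the corresponding $f_m$-block has a proper initial segment whose partial sum would have to be simultaneously $\ge$ its length (if the block were a degree sequence, again by \eqref{ld}) and strictly less than its length (by the partial-sum facts just noted for $(\xi_1,\dots,\xi_k)$), which is impossible, so $f_m$ of that block is $0$. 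The hypothesis $m\le k$ is what keeps the nested range nonempty and makes these initial segments proper; it also forces the disjoint range to be empty when $n<k+m$, or to consist of the single shift $\ell=k$ when $n=k+m$, for which the constraint $S_0=1$ arising below cannot hold---this is why \eqref{lcov2} carries no disjoint term.

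It remains to evaluate the two surviving contributions. For a nested shift, $f_k(\xi_1,\dots,\xi_k)\neq0$ forces $S_k=k-1$, so $(\xi_1,\dots,\xi_k)$ and $(\xi_{k+1},\dots,\xi_n)$ are independent and $\set{S_n=n-1}$ becomes $\set{\xi_{k+1}+\dots+\xi_n=n-k}$; hence each such term equals $\frac{\PP(S_{n-k}=n-k)}{\PP(S_n=n-1)}\,\E\bigpar{f_k(\xi_1,\dots,\xi_k)f_m(\xi_{\ell+1},\dots,\xi_{\ell+m})}$, and summing over $0\le\ell\le k-m$ recombines $\sum_\ell f_m(\xi_{\ell+1},\dots,\xi_{\ell+m})$ into $F_m$ of the tree with degree sequence $(\xi_1,\dots,\xi_k)$ by \eqref{Fk}; by \refR{RGWdegrees}, and since $f_k$ is supported on trees of order $k$ (\cf{} \eqref{efkkt}), the nested contribution equals $n\,\frac{\PP(S_{n-k}=n-k)}{\PP(S_n=n-1)}\,\E\bigpar{f_k(\cT)F_m(\cT)}$, which is finite because $\abs{F_m(\cT)}\ll\abs{\cT}$ while $f_k(\cT)=0$ off $\set{\abs{\cT}=k}$. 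For a disjoint shift (only when $n\ge k+m$), $f_k\neq0$ and $f_m\neq0$ force $\sum_{i=1}^k\xi_i=k-1$ and $\sum_{i=\ell+1}^{\ell+m}\xi_i=m-1$, so the remaining $n-k-m$ coordinates must sum to $n-k-m+1$; by independence and \eqref{efkkt} each of the $n-k-m+1$ such shifts contributes $\frac{\PP(S_{n-k-m}=n-k-m+1)}{\PP(S_n=n-1)}\,\E f_k(\cT)\,\E f_m(\cT)$. Adding the two contributions, subtracting $\E F_k(\ctn)\,\E F_m(\ctn)=n^2\,\frac{\PP(S_{n-k}=n-k)\,\PP(S_{n-m}=n-m)}{\PP(S_n=n-1)^2}\,\E f_k(\cT)\,\E f_m(\cT)$ from \refL{Lefkn}, and splitting $n^2=n(k+m-1)+n(n-k-m+1)$ to distribute this term, yields \eqref{lcov}; when $m\le k\le n\le k+m$ the disjoint contribution is absent and the same computation gives \eqref{lcov2}.
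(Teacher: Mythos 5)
Your proposal is correct and follows essentially the same route as the paper: the cyclic representation \eqref{Fd}, reduction by symmetry to shifts of a single block against $(\xi_1,\dots,\xi_k)$, the classification into nested, partially overlapping (vanishing), and disjoint shifts, evaluation of the two surviving contributions via independence and \eqref{efkkt}, and finally subtraction of $\E F_k(\ctn)\E F_m(\ctn)$ with the split $n^2=n(k+m-1)+n(n-k-m+1)$. The only (immaterial) difference is that you justify the vanishing of the overlapping and wrapped terms by the partial-sum characterization \eqref{ld}, whereas the paper's primary argument is that two fringe subtrees are either disjoint or nested, with the algebraic argument mentioned as an alternative.
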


Note that by \eqref{efkk2} and \eqref{ptk},
\begin{equation} \label{ek}
\E f_k(\cT)=\PP(|\cT|=k)\E f_k(\ctk) =\frac{\PP(S_{k}=k-1)}{k}\E f_k(\ctk)  
\end{equation}
and similarly (again because $f_k(\cT)=0$ unless $|\cT|=k$)
\begin{equation} \label{ekm}
\E \bigpar{f_k(\cT)F_m(\cT)}
=\PP(|\cT|=k)\E \bigpar{f_k(\ctk)F_m(\ctk)}.
\end{equation}

\begin{proof}
Note first that for $n=k+m-1$ and $n=k+m$, the formulas \eqref{lcov} and
\eqref{lcov2} agree, in the latter case because
$\PP(S_{n-k-m}=n-k-m+1)=P(S_0=1)=0$. Hence it suffices to prove
\eqref{lcov} for $n\ge k+m$ and \eqref{lcov2} for $k \le n\le k+m-1$.

By \eqref{Fd} and symmetry, 
\begin{multline}\label{vfn}
  \E\bigpar{F_k(\ctn)F_m(\ctn)} 
\\= n\sum_{j=0}^{n-1} 
\E\bigpar{f_k\xik f_m(\xi_{j+1},\dots,\xi_{j+m\bmod n})\mid S_n=n-1}
\end{multline}
Consider first the terms with $0\le j\le k-m$; these are the terms with
$\set{j+1,\dots,j+m}\subseteq\set{1,\dots,k}$,
and we see from \eqref{Fk} that if $(\xi_1,\dots,\xi_k)$ is the degree
sequence of a tree, then
\begin{equation}
\sum_{j=0}^{k-m} f_m(\xi_{j+1},\dots,\xi_{j+m})=F_m\xik.
\end{equation}
Hence, if we define $g(T):=f_k(T)F_m(T)$, $T\in\st$, and 
use \eqref{efn} and \refL{Lefkn} (or its proof),
noting that $g_k=g$,
\begin{equation}
  \begin{split}
&\sum_{j=0}^{k-m} 
\E\bigpar{f_k\xik f_m(\xi_{j+1},\dots,\xi_{j+m})\mid S_n=n-1}
\\&\qquad
=\E\bigpar{f_k\xik F_m\xik\mid S_n=n-1}
\\&\qquad
=\E\bigpar{g_k\xik\mid S_n=n-1}
=\frac1n\E G_k(\cT_n)
\\&\qquad
=\frac{\PP(S_{n-k}=n-k)}{\PP(S_{n}=n-1)}\,\E g_k(\cT).
  \end{split}
\end{equation}
This yields the first term on the \rhs{} of \eqref{lcov} and \eqref{lcov2}.

Next, two subtrees of a tree are either disjoint or one is a subtree of the
other. Hence, if $k-m<j<k$ so the index sets 
$\set{1,\dots,k}$ and $\set{j+1,\dots,j+m}$ overlap partly,
$(\xikx)$ and $(\xijmx)$ cannot both be degree sequences of trees (this can
also be seen algebraically from \eqref{ld}), and thus
\begin{equation*}
  f_k\xik f_m\xijm = 0.
\end{equation*}
Hence these terms in the sum in \eqref{vfn} vanish.
The same holds if $n-m < j < n$, with indices taken modulo $n$, when the
index sets again overlap partly (on the other side).

Finally, if $k\le j\le n-m$ (and thus $n\ge k+m$), 
the index sets $\set{1,\dots,k}$ and $\set{j+1,\dots,j+m}$ are disjoint.
By symmetry, the expectation in \eqref{vfn}
is the same for all $j$ in this range, so
we may assume $j=k$, noting that
this term appears $n-k-m+1$ times.
Arguing as in \eqref{efkn}, and using \eqref{efkkt},
\begin{equation}\label{vfnx}
  \begin{split}
&\E\bigpar{f_k\xik f_m(\xi_{k+1},\dots,\xi_{k+m})\mid S_n=n-1}
\\&\quad
=\frac{\E\bigpar{f_k\xik f_m(\xi_{k+1},\dots,\xi_{k+m})
\ett{S_n-S_{k+m}=n-k-m+1}}}
{\PP(S_n=n-1)}
\\&\quad
=\frac{\E{f_k\xik}\E{f_m(\xi_{k+1},\dots,\xi_{k+m})}
\PP\xpar{S_n-S_{k+m}=n-k-m+1}}{\PP(S_n=n-1)}
\\&\quad
=\frac{\PP\xpar{S_{n-k-m}=n-k-m+1}}{\PP(S_n=n-1)}{\E{f_k(\cT)}}\E f_m(\cT).
  \end{split}
\raisetag\baselineskip
\end{equation}

The results 
\eqref{lcov} and \eqref{lcov2}
now follow from \eqref{vfn}--\eqref{vfnx}, 
subtracting the product $\E F_k(\ctn) \E F_m(\ctn)$
which is given by two applications of \eqref{lefkn}.
(Note that in \eqref{lcov}, this is split into two terms.)
\end{proof}

We next estimate one of the factors in \eqref{lcov}, where there typically is
a lot of cancellation. 

\begin{lemma}\label{Lv2}
  \begin{xenumerate}
  \item 
As \ntoo, 
uniformly for all $k\ge0$ and $m\ge0$ with\/ $k+m\le n/2$,
\begin{multline}\label{lv2}
\frac{\PP\xpar{S_{n-k-m}=n-k-m+1}}{\PP(S_n=n-1)} 
-{\frac{\PP\xpar{S_{n-k}=n-k}}{\PP(S_n=n-1)}}
{\frac{\PP\xpar{S_{n-m}=n-m}}{\PP(S_n=n-1)}}
\\
= -\frac{1}{n\gss}+o\Bigparfrac1n+O\Bigpar{\frac{k+m}{n^{3/2}}}
+O\Bigpar{\frac{km}{n^2}}.
\end{multline}

\item 
For all $n\ge1$, $k\ge0$ and $m\ge0$ with\/ $n/2 \le k+m \le n$,
\begin{multline}\label{lv2b}
\frac{\PP\xpar{S_{n-k-m}=n-k-m+1}}{\PP(S_n=n-1)} 
-{\frac{\PP\xpar{S_{n-k}=n-k}}{\PP(S_n=n-1)}}
{\frac{\PP\xpar{S_{n-m}=n-m}}{\PP(S_n=n-1)}}
\\
= O\biggpar{\frac{\min(k,m)\, n\qq}{(n-k-m+1)^{3/2}}}
+O\biggpar{\frac{n\qq}{n-k-m+1}}.
\end{multline}
	  \end{xenumerate}
\end{lemma}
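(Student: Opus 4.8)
The plan is to reduce everything to the single quantity
\begin{multline*}
N\=\PP(S_{n-k-m}=n-k-m+1)\,\PP(S_n=n-1)\\
-\PP(S_{n-k}=n-k)\,\PP(S_{n-m}=n-m),
\end{multline*}
because the left-hand side of \eqref{lv2} — and of \eqref{lv2b} — is exactly $N/\PP(S_n=n-1)^2$, and $\PP(S_n=n-1)^2\sim\frac1{2\pi\gss}\,n\qw$ by \eqref{snn}. The expression is symmetric under $k\leftrightarrow m$, so I may assume $m\le k$; as elsewhere in this section I take the span $h=1$. I would write each of the four probabilities by Fourier inversion, $\PP(S_M=M+r)=\intpipi e^{-\ii rt}\tgf(t)^M\dd t$ with $\tgf(t)=e^{-\ii t}\gf(t)$, multiply out and combine the two products into double integrals over $(s,t)\in[-\pi,\pi]^2$, and factor out the common \emph{symmetric} weight $\tgf(s)^{n-k-m}\tgf(t)^{n-k-m}$, obtaining
\[
N=\frac1{(2\pi)^2}\iint_{[-\pi,\pi]^2}\tgf(s)^{n-k-m}\tgf(t)^{n-k-m}\,B(s,t)\dd s\dd t,
\]
where $B(s,t)\=e^{\ii(t-s)}\tgf(t)^{k+m}-\tgf(s)^m\tgf(t)^k$. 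The point of this arrangement is that $B(t,t)=\tgf(t)^{k+m}-\tgf(t)^{k+m}=0$: the bracket vanishes on the diagonal, and this is what produces the cancellation of the leading terms.

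For part (i) ($k+m\le n/2$) I would substitute $s=x/\sqrt n$, $t=y/\sqrt n$, and, since the weight is symmetric under $s\leftrightarrow t$, replace $B(s,t)$ by the symmetrized integrand $\tfrac12\bigpar{B(s,t)+B(t,s)}$ (legitimate because $\iint W\cdot B=\iint W\cdot B(\cdot,\cdot\ \text{swapped})$ when $W$ is symmetric). Expanding $\tgf$ near $0$ by \eqref{tgf}, and using \eqref{ccgf} ($\abs{\tgf(u)}\le e^{-\ccgf u^2}$) to dominate the region $\abs x$ or $\abs y\to\infty$ and justify dominated convergence, the weight tends to $e^{-\gss(x^2+y^2)/2}$ while $B(s,t)=\frac{\ii(y-x)}{\sqrt n}-\frac{(y-x)^2}{2n}+\frac{\gss m(x^2-y^2)}{2n}+\dotsb$; the $O(n\qqw)$ leading term is odd under $x\leftrightarrow y$ and cancels in the symmetrization, leaving $\tfrac12\bigpar{B(s,t)+B(t,s)}=-\frac{(x-y)^2}{2n}+\dotsb$. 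Hence the dominant contribution is
\[
\frac1{(2\pi)^2n}\iint e^{-\gss(x^2+y^2)/2}\Bigpar{-\frac{(x-y)^2}{2n}}\dd x\dd y=-\frac1{(2\pi)^2n}\cdot\frac1{2n}\cdot\frac{4\pi}{\gs^4}=-\frac1{2\pi\gs^4 n^2},
\]
so that $N/\PP(S_n=n-1)^2=-1/(n\gss)+\dotsb$, the main term of \eqref{lv2}.

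The error analysis hinges on the fact that the symmetrized integrand is already of order $n\qw$ \emph{pointwise}: thus the imprecision in replacing the weight (and the $\tgf$-powers appearing in $B$) by their quadratic approximations — which by \eqref{tgf} is only $o(t^2)$, not $O(t^3)$, since no third moment is assumed — contributes merely $o(n^{-2})$ to $N$, i.e.\ $o(1/n)$ to \eqref{lv2}; this is exactly why the error there is $o(1/n)$ rather than a power of $n$. The remaining, $(k,m)$-dependent, corrections are then collected by a routine but lengthy bookkeeping: the $x^2y^2$ cross term of $\tgf(s)^m\tgf(t)^k$ produces the $O(km/n^2)$ term, while the corrections $\tfrac{\gss(k+m)(x^2+y^2)}{2n}$ to the weight — which must be tracked since $k+m$ may be as large as $n/2$ — together with the higher-order terms of $B$ give the $O\bigpar{(k+m)/n^{3/2}}$ term. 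Part (ii) ($n/2\le k+m\le n$) I would handle from the same factored integral but with only crude bounds: writing $B(s,t)=\tgf(t)^k\bigpar{e^{\ii(t-s)}\tgf(t)^m-\tgf(s)^m}$, the diagonal vanishing together with $\tgf'(u)=O(\abs u)$ near $0$ and $\abs{\tgf(u)}\le e^{-\ccgf u^2}$ gives $\abs{B(s,t)}\ll\abs{\tgf(t)}^{k}\abs{s-t}\bigpar{1+\min(k,m)(\abs s+\abs t)}$; integrating this against $\abs{\tgf(s)}^{n-k-m}\abs{\tgf(t)}^{n-k-m}$ (whose relevant scale is $(n-k-m+1)\qqw$) and dividing by $\PP(S_n=n-1)^2$ yields \eqref{lv2b}, invoking \refL{LLT} exactly as in the proof of \refL{L2}(ii).

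The hard part is part (i), for two reasons. First, getting the leading constant right: a naive term-by-term expansion of the integrand gets it wrong, because the $O(n\qqw)$ part of the integrand is genuinely present and disappears only after one symmetrizes against the symmetric weight, so one must spot and use that structure. Second, and more seriously, one must control \emph{every} error term uniformly in $k$ and $m$ with only the weak expansion \eqref{tgf} available; the symmetrization is what makes this feasible, since it confines the imprecise Gaussian approximation of the weight to multiplying a quantity that is already $O(n\qw)$, turning what would be a useless $o(n\qqw)$ error into the required $o(1/n)$.
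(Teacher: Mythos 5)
Your proposal is correct and takes essentially the same approach as the paper: the same Fourier-inversion reduction to a symmetric double integral whose bracket vanishes on the diagonal, the same rescaling $t=x/\sqrt n$ with dominated convergence giving the constant $-1/(2\pi\gs^4n^2)$, and the same crude bounds for part (ii). Indeed your symmetrized bracket $\tfrac12\bigpar{B(s,t)+B(t,s)}$ equals $\tfrac12 e^{-\ii(s+t)}\bigpar{\tgf^k(t)e^{\ii t}-\tgf^k(s)e^{\ii s}}\bigpar{\tgf^m(t)e^{\ii t}-\tgf^m(s)e^{\ii s}}$, which is exactly the product of two diagonal-vanishing factors that the paper writes down directly; the only difference is that the paper organizes the error bookkeeping as a staged reduction ($m\to0$, then $k\to0$, then $k=m=0$) rather than a single Taylor expansion of the bracket.
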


\begin{proof}
\pfitem{i}
By multiplying \eqref{lv2} by $\PP(S_n=n-1)^2$ and using \eqref{snn}, we see
that \eqref{lv2} is equivalent to (assuming $h=1$ for simplicity)
  \begin{multline}\label{lv2x}
\PP\xpar{S_{n-k-m}=n-k-m+1}{\PP(S_n=n-1)} 
\\\shoveright{
-\PP\xpar{S_{n-k}=n-k}\PP\xpar{S_{n-m}=n-m}}
\\
= -\frac{1}{2\pi n^2\gs^4}+o\Bigparfrac1{n^2}+O\Bigpar{\frac{k+m}{n^{5/2}}}
+O\Bigpar{\frac{km}{n^3}}.
  \end{multline}
To prove this, we first obtain by Fourier inversion, recalling
  $\tgf(t)\=e^{-\ii t}\gf(t)$,
  \begin{multline}\label{sol}
\PP\xpar{S_{n-k-m}=n-k-m+1}{\PP(S_n=n-1)} 
\\\shoveright{
-\PP\xpar{S_{n-k}=n-k}\PP\xpar{S_{n-m}=n-m}}
\\\shoveleft{\qquad
= \intpipi \tgf^{n-k-m}(t)e^{-\ii t}\dd t\cdot
\intpipi \tgf^{n}(u)e^{\ii u}\dd u
}
\\\shoveright{
-\intpipi \tgf^{n-k}(t)\dd t \cdot\intpipi \tgf^{n-m}(u)\dd u}
\\
\shoveleft{\qquad
=
\frac{1}{8\pi^2}\intpipix\intpipix \tgf^{n-k-m}(t)e^{-\ii t}
 \tgf^{n-k-m}(u)e^{-\ii u} 
\times}
\\
\Bigpar{\tgf^k(t)e^{\ii t}-\tgf^k(u)e^{\ii u}}
\Bigpar{\tgf^m(t)e^{\ii t}-\tgf^m(u)e^{\ii u}}
\dd t\dd u.
  \end{multline}
For all $k\ge 0$, we have by \eqref{midsommar}
$|\tgf^k(t)e^{\ii t}-1|=O(|t|+kt^2)$
and thus
\begin{equation}\label{sola}
  |\tgf^k(t)e^{\ii t}-\tgf^k(u)e^{\ii u}|
=O(|t|+kt^2+|u|+ku^2);
\end{equation}
similarly,
\begin{equation}\label{solb}
\Bigabs{\bigpar{\tgf^m(t)e^{\ii t}-\tgf^m(u)e^{\ii u}}
- \tgf^m(t)\tgf^m(u)\bigpar{e^{\ii t}-e^{\ii u}}
}
=O(mt^2+mu^2).
\end{equation}
Furthermore, 
if $k+m\le n/2$ and $|t|\le\pi$, then by \eqref{ccgf},
or \eqref{johd},
\begin{equation}
  \label{solc}
\bigabs{\tgf^{n-k-m}(t)}
\le \exp\xpar{-\ccgf(n-k-m)t^2} \le \exp\xpar{-\ccjohd nt^2}. 
\end{equation}

Denote the \lhs{} of \eqref{sol} by $\gD_{k,m}$.
If we replace the factor 
$\tgf^m(t)e^{\ii t}-\tgf^m(u)e^{\ii u}$ in the \rhs{} by
$\tgf^m(t)\tgf^m(u)\bigpar{e^{\ii t}-e^{\ii u}}$, we obtain after
cancellation $\gD_{k,0}$.
Using \eqref{sola}--\eqref{solc} to estimate the resulting error we obtain
  \begin{equation}\label{sold}
	\begin{split}
&\bigabs{\gD_{k,m}-\gD_{k.0}}
\\&\quad
\ll
\intpipix\intpipix e^{-\ccjohd nt^2-\ccjohd nu^2}
\bigpar{|t|+|u|+kt^2+ku^2}
\bigpar{mt^2+mu^2}
\dd t\dd u
\\&\quad
\ll {m n^{-5/2}+km n^{-3}}.
	\end{split}
\raisetag{\baselineskip}
  \end{equation}
This is covered by the error terms in \eqref{lv2x}, and thus it suffices to
prove \eqref{lv2x} for $m=0$.
By symmetry, we also obtain $|\gD_{k,0}-\gD_{0,0}| \ll k n^{-5/2}$,
so it suffices to prove \eqref{lv2x} in the case $k=m=0$.

In that case, \eqref{sol} yields
  \begin{equation}\label{sol00}
	\begin{split}
\gD_{0,0}
&=
\frac{1}{8\pi^2}\intpipix\intpipix \tgf^{n}(t)e^{-\ii t}
 \tgf^{n}(u)e^{-\ii u} 
\bigpar{e^{\ii t}-e^{\ii u}}^2
\dd t\dd u
\\
&=
\frac{1}{8\pi^2 n^2}
\int_{-\pi\sqrt n}^{\pi\sqrt n}
\int_{-\pi\sqrt n}^{\pi\sqrt n}
 \tgf^{n}\Bigparfrac{x}{\sqrt n}e^{-\ii x/\sqrt n}
 \tgf^{n}\Bigparfrac{y}{\sqrt n}e^{-\ii y/\sqrt n} 
\times
\\& \hskip 14em
\bigpar{\sqrt n\bigpar{e^{\ii x/\sqrt n}-e^{\ii y/\sqrt n}}}^2
\dd x\dd y,
	\end{split}
  \end{equation}
and it follows by dominated convergence, using \eqref{tgf} and \eqref{ccgf},
that as \ntoo,
  \begin{equation}\label{sol000}
	\begin{split}
n^2 \gD_{0,0}
&\to
-\frac{1}{8\pi^2}
\intoooo
\intoooo
e^{-\gss x^2/2-\gss y^2/2}(x-y)^2
\dd x\dd y
=-\frac{4\pi}{8\pi^2\gs^4}.
	\end{split}
  \end{equation}
This shows \eqref{lv2x} in the special case $k=m=0$, and thus by the
estimate
\eqref{sold} for all $k$ and $m$ with $k+m\le n/2$, which completes the
proof
of \eqref{lv2x} and \eqref{lv2}.

\pfitem{ii}
To prove \eqref{lv2b}, we first observe that we may assume $m\le k$ by
symmetry. In this case $n \ge k \ge n/4$.
We again multiply by $\PP(S_n=n-1)^2$ and use \eqref{sol}.
We now use the estimate, by \eqref{ccgf}, for $|t|,|u|\le\pi$,
\begin{equation}\label{sola2}
  |\tgf^k(t)e^{\ii t}-\tgf^k(u)e^{\ii u}|
\le |\tgf^k(t)|+|\tgf^k(u)|
\le \exp\xpar{-\ccgf kt^2} + \exp\xpar{-\ccgf ku^2}. 
\end{equation}
Using this and \eqref{sola} (with $k$ replaced by $m$) in \eqref{sol} we
obtain, using $k\gg n$ and symmetry,
{\multlinegap=0pt
  \begin{multline*}
\gD_{k,m}
\ll
\intpipix\intpipix e^{-\ccgf\xpar{n-k-m}t^2-\ccgf\xpar{n-k-m}u^2}
\Bigpar{e^{-\ccgf kt^2}+e^{-\ccgf ku^2}}
\times
\\
\shoveright{
\bigpar{|t|+mt^2+|u|+mu^2)}
\dd t\dd u}
\\
\shoveleft{\phantom{\gD_{k,m}}
\le2e^{c_3\pi^2}
\intoooo\intoooo e^{-\ccgf k t^2-\ccgf\xpar{n-k-m+1}u^2}
\bigpar{|t|+mt^2+|u|+mu^2)}
\dd t\dd u}
\\
\shoveleft{\phantom{\gD_{k,m}}
\ll \frac{1}{n\qq(n-k-m+1)} + \frac{m}{n\qq(n-k-m+1)^{3/2}}.
\hfill}
  \end{multline*}}%
The result \eqref{lv2b} follows by dividing by $\PP(S_n=n-1)^2\gg n\qw$.
\end{proof}

In particular, we obtain a simple asymptotic result for fixed $k$ and $m$.

\begin{lemma}\label{Lcovkm}
For any fixed $k$ and $m$ with $k\ge m$, 
as \ntoo,
\begin{multline*}
\frac1n\Cov( F_k(\ctn), F_m(\ctn))
\to
\\
 \E\bigpar{f_k(\cT) F_m(\cT)}
-(k+m-1 + \gs^{-2})
\E{f_k(\cT)}\E f_m(\cT).
\end{multline*} 
\end{lemma}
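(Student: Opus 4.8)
The plan is to start from the exact identity \eqref{lcov} of \refL{Lcov}, which is valid whenever $n\ge k+m-1$, hence for all large $n$ since $k$ and $m$ are fixed, divide it by $n$, and pass to the limit term by term. Observe first that there are no integrability issues: $f_k(\cT)$ vanishes unless $|\cT|=k$, so $f_k(\cT)$, $F_m(\cT)$ and their product $f_k(\cT)F_m(\cT)$ are bounded random variables (the relevant trees form the finite set $\st_k$), and likewise $f_m(\cT)$ is bounded; thus $\E\bigpar{f_k(\cT)F_m(\cT)}$, $\E f_k(\cT)$ and $\E f_m(\cT)$ are well-defined finite numbers and it suffices to control the ratios of return probabilities occurring in \eqref{lcov}.

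Dividing the first term on the right-hand side of \eqref{lcov} by $n$ gives $\frac{\PP(S_{n-k}=n-k)}{\PP(S_n=n-1)}\,\E\bigpar{f_k(\cT)F_m(\cT)}$, and by \refL{L2}(i) with $k$ fixed this ratio tends to $1$, so this contribution converges to $\E\bigpar{f_k(\cT)F_m(\cT)}$. Dividing the second term by $n$ gives $-(k+m-1)\,\frac{\PP(S_{n-k}=n-k)}{\PP(S_n=n-1)}\cdot\frac{\PP(S_{n-m}=n-m)}{\PP(S_n=n-1)}\,\E f_k(\cT)\,\E f_m(\cT)$; again both ratios tend to $1$ by \refL{L2}(i), so this contribution converges to $-(k+m-1)\,\E f_k(\cT)\,\E f_m(\cT)$.

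The one delicate point is the third term, namely $(n-k-m+1)\,\E f_k(\cT)\,\E f_m(\cT)$ times the bracketed difference of ratios, after dividing \eqref{lcov} by $n$. Since that difference of ratios tends to $0$ while the prefactor $n-k-m+1$ grows like $n$, a crude bound yields only an indeterminate form; here one must use the sharp cancellation established in \refL{Lv2}(i). For fixed $k,m$ and $n$ large enough that $k+m\le n/2$, that lemma gives that the difference of ratios equals $-\tfrac1{n\gss}+o(1/n)+O(n^{-3/2})+O(n^{-2})=-\tfrac1{n\gss}+o(1/n)$; multiplying by $n-k-m+1=n(1+o(1))$ produces $-\gs\qww+o(1)$, so this term converges to $-\gs\qww\,\E f_k(\cT)\,\E f_m(\cT)$. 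Adding the three limits and combining the two coefficients of $\E f_k(\cT)\,\E f_m(\cT)$ yields exactly $\E\bigpar{f_k(\cT)F_m(\cT)}-(k+m-1+\gs\qww)\,\E f_k(\cT)\,\E f_m(\cT)$, as claimed. The main obstacle is precisely this last term — it relies essentially on the refined estimate of \refL{Lv2}(i), where the genuine analytic work (the double-integral cancellation via Fourier inversion) already took place; the rest of the argument is bookkeeping.
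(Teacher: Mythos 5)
Your proof is correct and follows essentially the same route as the paper: divide the exact identity \eqref{lcov} by $n$, use \refL{L2}(i) to send the probability ratios in the first two terms to $1$, and use the cancellation estimate \refL{Lv2}(i) to show the third term contributes $-\gs\qww\,\E f_k(\cT)\,\E f_m(\cT)$. The only addition is your (correct, if routine) remark that all the expectations involved are finite because $f_k$ is supported on the finite set $\st_k$.
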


\begin{proof}
  This now follows from \refL{Lcov}. After division by $n$, 
the two first terms on the \rhs{} of \eqref{lcov} converge to
$ \E\bigpar{f_k(\cT) F_m(\cT)}$ and $-(k+m-1)\E{f_k(\cT)}\E f_m(\cT)$
since the probability ratios converge to 1 by \refL{L2}(i).
The third term divided by $n$ is by \refL{Lv2}(i)
\begin{equation*}
\bigpar{n+O(1)}\bigpar{-\gs^{-2}n\qw+o(n\qw)}\E{f_k(\cT)}\E f_m(\cT)  
\to -\gs^{-2}\E{f_k(\cT)}\E f_m(\cT)  .
\end{equation*}
\end{proof}
  
This yields immediately  variance asymptotics for a functional $f$ with
finite support. 

\begin{corollary}\label{Cfinite}
  Suppose that $f$ has finite support.
Then, as \ntoo, 
{\multlinegap=0pt
\begin{multline*}
  \frac1n \Var F(\ctn)
\to
\\
\E\Bigpar{f(\cT)\bigpar{2F(\cT)-f(\cT)}}
-2\E\bigpar{|\cT|f(\cT)}\E f(\cT)
+\bigpar{1-\gs^{-2}}\bigpar{\E f(\cT)}^2.
\end{multline*}}%
\end{corollary}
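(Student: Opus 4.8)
The plan is to reduce the statement to the covariance asymptotics of \refL{Lcovkm} via the decomposition $F=\sumki F_k$ of \eqref{Fsum}, and then to collapse the resulting finite double sum into closed form by exploiting the support structure of the $f_k$. Since $f$ has finite support, there is an integer $K$ such that $f_k\equiv0$, and hence $F_k\equiv0$, for all $k>K$; thus $F(\ctn)=\sum_{k=1}^{K}F_k(\ctn)$ is a \emph{finite} sum and
\begin{equation*}
  \Var F(\ctn)=\sum_{k=1}^{K}\sum_{m=1}^{K}\Cov\bigpar{F_k(\ctn),F_m(\ctn)}.
\end{equation*}
First I would apply \refL{Lcovkm} to each of these finitely many terms, using $\Cov(F_k,F_m)=\Cov(F_m,F_k)$ to reduce to the case $k\ge m$ treated there; this gives
\begin{equation*}
  \frac1n\Var F(\ctn)\to
  \sum_{k,m=1}^{K}\E\bigpar{f_{\max(k,m)}(\cT)F_{\min(k,m)}(\cT)}
  -\sum_{k,m=1}^{K}\bigpar{k+m-1+\gs^{-2}}\E f_k(\cT)\,\E f_m(\cT).
\end{equation*}

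It then remains to evaluate the two double sums. For the second, write $\mu:=\E f(\cT)=\sumki\E f_k(\cT)$ by \eqref{Fsum}, and note that $\sumki k\,f_k(T)=|T|f(T)$ for every tree $T$ (both sides vanish when $|T|>K$), whence $\sumki k\,\E f_k(\cT)=\E\bigpar{|\cT|f(\cT)}$; expanding $k+m-1+\gs^{-2}$ then yields
\begin{equation*}
  \sum_{k,m=1}^{K}\bigpar{k+m-1+\gs^{-2}}\E f_k(\cT)\,\E f_m(\cT)
  =2\E\bigpar{|\cT|f(\cT)}\,\mu-\bigpar{1-\gs^{-2}}\mu^2.
\end{equation*}
For the first double sum I would separate the diagonal: since $f_k(T)\neq0$ forces $|T|=k$, and the only fringe subtree of size $k$ in a tree of size $k$ is the tree itself, we get $f_k(T)F_k(T)=f(T)^2\ett{|T|=k}$, so the diagonal contributes $\sum_k\E\bigpar{f_k(\cT)F_k(\cT)}=\E f(\cT)^2$. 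For the off-diagonal part, fix $\cT$ with $|\cT|=j$: only $f_j(\cT)=f(\cT)$ is nonzero, $F_m(\cT)=0$ for $m>j$, and $F_j(\cT)=f(\cT)$, so $\sum_{m<j}F_m(\cT)=F(\cT)-f(\cT)$; hence the off-diagonal terms sum to $2\E\bigpar{f(\cT)\xpar{F(\cT)-f(\cT)}}$. Adding the diagonal gives $\E\bigpar{f(\cT)\xpar{2F(\cT)-f(\cT)}}$, and combining this with the previous display produces exactly the asserted limit.

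This is essentially bookkeeping once \refL{Lcovkm} is in hand, so I do not anticipate a genuine obstacle; the only points that need care are the interplay of the two support constraints when collapsing the double sum --- $f_k$ is carried by $\set{|\cT|=k}$, while $F_m$ vanishes for $m>|\cT|$ --- and the observation that all the expectations on the right-hand side are finite. The latter holds because a functional of finite support is bounded, and $f(\cT)\neq0$ forces $|\cT|\le K$, so $|F(\cT)|\le K\norm{f}_\infty$ and $\bigabs{|\cT|f(\cT)}\le K\norm{f}_\infty$, making all the relevant products bounded random variables.
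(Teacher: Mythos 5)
Your proposal is correct and follows essentially the same route as the paper: decompose $F=\sum_{k\le K}F_k$ via \eqref{Fsum}, apply \refL{Lcovkm} termwise to the finite double sum of covariances, and collapse the result using the support relations $f_k(T)F_k(T)=f_k(T)^2$, $f_k(T)F_m(T)=0$ for $m>k$, and $\sum_k kf_k(T)=|T|f(T)$. The bookkeeping in your second and third paragraphs matches the paper's computation exactly.
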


\begin{proof}
By \eqref{Fsum} and \refL{Lcovkm}, the  limit exists and equals
\begin{multline*}
2\sum_k\sum_{m<k} \E\bigpar{f_k(\cT) F_m(\cT)}
+
\sum_{k} \E\bigpar{f_k(\cT) F_k(\cT)}
\\
-\sum_{k,m}
(k+m-1 + \gs^{-2})
\E{f_k(\cT)}\E f_m(\cT),
\end{multline*}
where all sums are finite since $f_k=0$ for large $k$.
Since $f_k(T)=0$ unless $|T|=k$, we have 
$f_k(T)F_k(T)=f_k(T)^2$ and $f_k(T)F_m(T)=0$ for $m>k$, 
$T\in\st$.
Using this, \eqref{Fsum} and the similar relations $\sum_k f_k(T)^2=f(T)^2$ and 
$\sum_k kf_k(T)=|T|f(T)$, it follows that
the limit can be written as
{\multlinegap=0pt
  \begin{multline*}
2\sum_k\sum_{m} \E\bigpar{f_k(\cT) F_m(\cT)}
-\sum_{k} \E\bigpar{f_k(\cT)^2}
-2\sum_k k\E f_k(\cT) \sum_m\E f_m(\cT)
\\\shoveright{
+\bigpar{1-\gs^{-2}} \biggpar{\sum_k \E f_k(\cT)}^2}
\\
= 2 \E\bigpar{f(\cT)F(\cT)}-\E\bigpar{f(\cT)^2}
-2 \E \bigpar{|T|f(\cT)} \E f(\cT)
+\bigpar{1-\gs^{-2}} \bigpar{\E f(\cT)}^2.
  \end{multline*}}%
\end{proof}

\begin{remark}
  \label{RCfinite}
The limit in \refC{Cfinite} equals $\gam^2$ in \eqref{gam} for every $f$
with finite support, and more generally for every $f$ such that the
expression in \eqref{gam} is finite and $\E\bigpar{|\cT||f(\cT)|}<\infty$.
Conversely, if $\E f(\cT)\neq0$,
the condition $\E\bigpar{|\cT||f(\cT)|}<\infty$
is necessary for the expression in
\refC{Cfinite} to be finite;
note that this condition by \eqref{pett} is equivalent to
$\sumni \E|f(\ctn)|/\sqrt n<\infty$ and thus imposes a stronger decay
of $\E f(\ctn)$ than \eqref{t1v2}.
\end{remark}

In order to extend this to more general functionals $f$, we prove a general
upper bound for the variance.
We first give another lemma estimating a combination of probability ratios
where there typically is a lot of cancellation.

\begin{lemma}
  \label{L3}
  \begin{xenumerate}
  \item 
If $m \le k/2$ and $k\le n$, then 
\begin{multline}\label{l3a}
k \frac{\PP\xpar{S_{k-m}=k-m}}{\PP(S_k=k-1)}
-\min\xpar{k+m-1,n}
\frac{\PP\xpar{S_{n-m}=n-m}}{\PP(S_n=n-1)}
\\
=O\bigpar{m}+O\bigpar{k\qq}.
\end{multline}
\item 
If $k/2 < m \le k \le n$, then 
\begin{multline}\label{l3b}
k \frac{\PP\xpar{S_{k-m}=k-m}}{\PP(S_k=k-1)}
-\min\xpar{k+m-1,n}
\frac{\PP\xpar{S_{n-m}=n-m}}{\PP(S_n=n-1)}
\\
=O\biggparfrac{k\qqc}{(k-m+1)\qq}.
\end{multline}
\item 
If furthermore $\E\xi^{2+\gd}<\infty$  with $0<\gd\le1$, then
the estimate in \textup{(i)} is improved to $O(m)+O\bigpar{k^{(1-\gd)/2}}$. 
\end{xenumerate}
\end{lemma}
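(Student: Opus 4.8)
The plan is to reduce all three parts to the local limit theorem \refL{LLT} (equivalently \eqref{snn}) together with the ratio estimates already established in \refL{L2} and \refL{L23}; no fresh Fourier computation is needed, since the relevant cancellation is packaged into those lemmas. As elsewhere I assume span $1$, so that \eqref{snn} gives $\PP(S_N=N-1)\gg N\qqw$ and, evaluating \refL{LLT} at the mode, $\PP(S_N=N)=O\bigpar{(N+1)\qqw}$ (trivially if the probability vanishes). Put
\[
r_1\=\frac{\PP(S_{k-m}=k-m)}{\PP(S_k=k-1)},\qquad
r_2\=\frac{\PP(S_{n-m}=n-m)}{\PP(S_n=n-1)}.
\]
Since $m\ge1$ and $k\le n$ we have $k\le\min(k+m-1,n)\le k+m-1$, so $\min(k+m-1,n)=k+\theta$ with $0\le\theta\le m-1$, and hence the left-hand side of \eqref{l3a} and \eqref{l3b} equals
\[
kr_1-(k+\theta)r_2 = k(r_1-1)-k(r_2-1)-\theta r_2 .
\]
(We may assume $m\ge1$; the case $m=0$ is handled the same way.)

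For parts \textup{(i)} and \textup{(iii)} the hypothesis $m\le k/2$ makes \refL{L2}(i), respectively \refL{L23}, applicable: taking the parameters $(n,k)$ of that lemma to be $(k,m)$ controls $r_1$, and taking them to be $(n,m)$ (legitimate since then $m\le n/2$) controls $r_2$. In case \textup{(i)} this yields $r_1-1=O(m/k)+o(k\qqw)$, $r_2-1=O(m/n)+o(n\qqw)$ and $r_2=O(1)$, so
\[
\bigabs{kr_1-(k+\theta)r_2}
\ll m+k\cdot o(k\qqw)+\tfrac{km}{n}+k\cdot o(n\qqw)+\theta .
\]
Now $km/n\le m$ and $\theta\le m$ because $k\le n$; moreover $k\cdot o(k\qqw)=o(k\qq)$, and writing the error in the $r_2$ estimate as $\psi(n)n\qqw$ with $\psi(n)\to0$ we get $k\cdot o(n\qqw)=(k/n)\qq k\qq\psi(n)\le k\qq\psi(n)=o(k\qq)$, again since $k\le n$. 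This is the one point that needs care: a priori the $o(n\qqw)$ term could be amplified to order $n\qq$, but the bound $k\le n$ pins it at $o(k\qq)$, and \eqref{l3a} follows with error $O(m)+O(k\qq)$. Part \textup{(iii)} is identical with \refL{L23} in place of \refL{L2}(i): the errors become $O\bigpar{k^{-(1+\gd)/2}}$ and $O\bigpar{n^{-(1+\gd)/2}}$, and $n\ge k$ gives $k\cdot n^{-(1+\gd)/2}\le k^{(1-\gd)/2}$, hence the improved bound $O(m)+O\bigpar{k^{(1-\gd)/2}}$.

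For part \textup{(ii)}, $m$ may exceed $k/2$, so \refL{L2} and \refL{L23} do not apply and I bound the two terms of $kr_1-(k+\theta)r_2$ individually, using only \refL{LLT}. Since $\PP(S_{k-m}=k-m)=O\bigpar{(k-m+1)\qqw}$ and $\PP(S_k=k-1)\gg k\qqw$, we get $kr_1\ll k\qqc(k-m+1)\qqw$, which is already the asserted bound. For the second term, $k+\theta\le k+m-1\le 2k$ as $m\le k$, while $\PP(S_{n-m}=n-m)=O\bigpar{(n-m+1)\qqw}$ and $\PP(S_n=n-1)\gg n\qqw$ give $r_2\ll\bigpar{n/(n-m+1)}\qq$. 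Since $x\mapsto x/(x-m+1)$ is decreasing for $x\ge m$ and $n\ge k\ge m$, we have $n/(n-m+1)\le k/(k-m+1)$, whence $(k+\theta)r_2\ll k\qqc(k-m+1)\qqw$ too, and \eqref{l3b} follows by subtraction.

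The only genuinely delicate step is thus the uniformity in $n$ in parts \textup{(i)} and \textup{(iii)}: the contribution of the \refL{L2}/\refL{L23} error for $r_2$ must be kept $O(k\qq)$, respectively $O\bigpar{k^{(1-\gd)/2}}$, rather than allowed to grow with $n$, and this is precisely where $k\le n$ is used. Everything else is routine bookkeeping with the local limit theorem; in particular, the decomposition above makes the case split according as $n\ge k+m-1$ or $n<k+m-1$, suggested by the $\min$, unnecessary.
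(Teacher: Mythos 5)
Your proof is correct and follows essentially the same route as the paper: parts (i) and (iii) apply Lemma~\ref{L2}(i) (resp.\ Lemma~\ref{L23}) twice and use $k\le n$ to keep the $o(n\qqw)$ error at $o(k\qq)$, while part (ii) reduces to the crude local-limit bounds and the monotonicity of $x\mapsto x/(x-m+1)$, exactly as in Lemma~\ref{L2}(ii). The only cosmetic difference is that you absorb the $\min(k+m-1,n)$ into a single parameter $\theta\in[0,m-1]$ instead of splitting into the cases $k+m-1\le n$ and $k+m-1>n$ as the paper does; both treatments give the $O(m)$ correction in the same way.
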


\begin{proof}
  \pfitem{i}
By \refL{L2}(i) (twice), 
\begin{equation}\label{promo}
\begin{split}
&k \frac{\PP\xpar{S_{k-m}=k-m}}{\PP(S_k=k-1)}
-\xpar{k+m-1}
\frac{\PP\xpar{S_{n-m}=n-m}}{\PP(S_n=n-1)}
\\
&\quad=
k \Bigpar{1+O\Bigparfrac{m}{k}+O\bigpar{k\qqw}}
-(k+m-1)\Bigpar{1+O\Bigparfrac{m}{n}+O\bigpar{n\qqw}}	
\\
&\quad= O(m)+O\bigpar{k\qq},
\end{split}  
\raisetag{\baselineskip}
\end{equation}
which shows \eqref{l3a} if also $k+m-1\le n$.

If $k+m-1>n$, we have $0<k+m-1-n<m$ and thus, by \refL{L2}(i) again (or by
\eqref{llt}--\eqref{snn}),
\begin{equation}\label{enblad}
\xpar{k+m-1-n}
\frac{\PP\xpar{S_{n-m}=n-m}}{\PP(S_n=n-1)}
\\
= O(k+m-1-n) =O(m),
\end{equation}
and \eqref{l3a} follows by adding \eqref{promo} and \eqref{enblad}.

\pfitem{ii}
By \refL{L2}(ii),
\begin{multline*}
\lrabs{
k \frac{\PP\xpar{S_{k-m}=k-m}}{\PP(S_k=k-1)}
-\min\xpar{k+m-1,n}
\frac{\PP\xpar{S_{n-m}=n-m}}{\PP(S_n=n-1)}}
\\
\ll
k\frac{k\qq}{(k-m+1)\qq}
+ (k+m)\frac{n\qq}{(n-m+1)\qq},
\end{multline*}
yielding the result since $n/(n-m+1)\le k/(k-m+1)$.

\pfitem{iii}
We use the improved estimate in \refL{L23} instead of \refL{L2}(i) in
\eqref{promo} and the result follows as above.
\end{proof}

\CCreset
\begin{theorem}  \label{Tvar<}
For any functional $f:\st\to\bbR$,
\begin{equation}\label{tvar<1}
\Var\bigpar{F(\ctn)}\qq
\le \CC n\qq
\biggpar{
\sup_k\sqrt{\E f(\ctk)^2}
+\sumki \frac{\sqrt{\E f(\ctk)^2}}{k}} ,
\end{equation}
with $\CCx$ independent of $f$.
\end{theorem}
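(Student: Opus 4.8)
The plan is to prove, by strong induction on $n$, that $\Var F(\ctn)\le C^2nB^2$ for an absolute constant $C$, where $a_k:=\sqrt{\E f(\ctk)^2}$ and $B:=\sup_k a_k+\sum_{k\ge1}a_k/k$; we may assume $B<\infty$, and $n=1$ is trivial since then $F(\ctn)$ is deterministic. Since $F_k(\ctn)=0$ for $k>n$,
\[
\Var F(\ctn)=\sum_{1\le m\le k\le n}\bigpar{2-\ett{m=k}}\Cov\bigpar{F_k(\ctn),F_m(\ctn)} ,
\]
and I would substitute the exact formula of \refL{Lcov} into each term (its two cases agree in their overlap and together cover all $m\le k\le n$). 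This displays the sum as $A+B$, where $A$ collects the \emph{diagonal} contributions $nr_k\E\bigpar{f_k(\cT)F_m(\cT)}$, with $r_j:=\PP(S_{n-j}=n-j)/\PP(S_n=n-1)$, and $B$ collects the \emph{product} contributions $n\mu_k\mu_m\bigpar{(n-k-m+1)s_{k,m}-nr_kr_m}$, with $\mu_j:=\E f_j(\cT)$ and $s_{k,m}:=\PP(S_{n-k-m}=n-k-m+1)/\PP(S_n=n-1)$ (read as $0$ when $k+m>n$). The essential point is that one must \emph{not} bound the $\Var F_k(\ctn)$ separately and sum: that loses a power of $n$, since it ignores the near-independence of the small fringe subtrees, which lives precisely in the cancellations inside $A$ and inside $B$; the trivial example $f\equiv1$, where $\Var F(\ctn)=0$, already shows $A$ and $B$ cannot be controlled in isolation.

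For part $A$, write $\E\bigpar{f_k(\cT)F_m(\cT)}=\PP(|\cT|=k)\bigpar{\E f(\ctk)\,\E F_m(\ctk)+\Cov(f(\ctk),F_m(\ctk))}$. I carry the $\E\cdot\E$ contributions over to $B$ (below); summing the covariance contributions over $m\le k$ and using $\sum_{m\le k}(2-\ett{m=k})F_m(\ctk)=2F(\ctk)-f(\ctk)$ pointwise leaves $\sum_{k=1}^n nr_k\PP(|\cT|=k)\bigpar{2\Cov(f(\ctk),F(\ctk))-\Var f(\ctk)}$. Here $nr_k\PP(|\cT|=k)\ll n^{3/2}k\qqcw(n-k+1)\qqw$ by \refL{LLT} (and equals $1$ when $k=n$), and $\bigabs{2\Cov(f(\ctk),F(\ctk))-\Var f(\ctk)}\le2a_k\sqrt{\Var F(\ctk)}+a_k^2$ by the \CSineq. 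For $k<n$ the inductive hypothesis gives $\sqrt{\Var F(\ctk)}\le Ck\qq B$, and elementary estimates (splitting at $k=n/2$, using $\sum_k a_k/k\le B$, $a_k\le B$, $\sum_{n/2<k\le n}(n-k+1)\qqw\ll n\qq$) then yield $\sum_{k<n}nr_k\PP(|\cT|=k)\,a_k\sqrt{\Var F(\ctk)}\ll CnB^2$ and $\sum_{k<n}nr_k\PP(|\cT|=k)\,a_k^2\ll nB^2$. The single term $k=n$ involves $\Var F(\ctn)$ itself; moving it to the left and using $\Cov(f(\ctn),F(\ctn))\le a_n\sqrt{\Var F(\ctn)}$ and $\Var f(\ctn)\ge0$, one is left with $V^2\le2a_nV+DnB^2$, where $V:=\sqrt{\Var F(\ctn)}$ and $D$ is at most an absolute constant times $1+C$; hence $V\le2a_n+\sqrt D\,n\qq B\le(2+\sqrt D)n\qq B$, and since $\sqrt D$ grows only like $(1+C)\qq$ this closes the induction once $C$ is chosen large enough.

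For part $B$, I first fold in the $\E\cdot\E$ contributions set aside from $A$. With $q_{k,m}:=k\,\PP(S_{k-m}=k-m)/\PP(S_k=k-1)$, \refL{Lefkn} (applied with $n$ replaced by $k$) gives $\E F_m(\ctk)=q_{k,m}\mu_m$ for $m\le k$, so, using also $\PP(|\cT|=k)\E f(\ctk)=\mu_k$, the $\E\cdot\E$ part of $A$ equals $\sum_{1\le m\le k\le n}(2-\ett{m=k})nr_kq_{k,m}\mu_k\mu_m$; adding this to $B$ produces $n\sum_{1\le m\le k\le n}(2-\ett{m=k})\Phi_{k,m}\mu_k\mu_m$, where
\[
\Phi_{k,m}:=r_kq_{k,m}+(n-k-m+1)s_{k,m}-nr_kr_m .
\]
The point is the \emph{double} cancellation $q_{k,m}\approx\min(k+m-1,n)\,r_m$ (\refL{L3}) and $s_{k,m}\approx r_kr_m$ (\refL{Lv2}): writing $\Phi_{k,m}=r_k\bigpar{q_{k,m}-\min(k+m-1,n)r_m}+(n-k-m+1)\bigpar{s_{k,m}-r_kr_m}$ (using $\min(k+m-1,n)=k+m-1$ for $n\ge k+m-1$), Lemmas \ref{L2}, \ref{L3} and \ref{Lv2} give $\bigabs{\Phi_{k,m}}\ll m+k\qq+km/n$ in the main regime $m\le k/2$, $k+m\le n/2$, with the coarser parts~(ii) of those lemmas handling the regimes where $m$ is comparable to $k$ or $k+m$ is comparable to $n$. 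Inserting $\bigabs{\mu_j}\le\PP(|\cT|=j)\,a_j\ll j\qqcw a_j$ and summing, every term reduces to $\ll B^2$ via the elementary inequalities $\sum_{m\le k}m\qqw a_m\le k\qq B$, $\sum_m m\qw a_m\le B$, $\sum_k k\qqcw a_k\ll B$ and $\sum_{k\le n}k\qqw a_k\ll n\qq B$ (for instance the $km/n$ contribution gives $n\qw\bigpar{\sum_{k\le n}k\qqw a_k}^2\ll B^2$); so part $B$ is $\ll nB^2$, and combining with the bound on part $A$ completes the induction.

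The main obstacle is the bookkeeping of these cancellations across all parameter regimes — $k$ and/or $m$ of order $n$, and $m$ small versus comparable to $k$ — making sure that after weighting by $\abs{\mu_k\mu_m}$ (respectively by $a_k\sqrt{\Var F(\ctk)}$) and summing over $1\le m\le k\le n$, every error term from Lemmas \ref{L2}, \ref{Lv2}, \ref{L3} collapses to $O(nB^2)$ with a constant independent of $f$ and $n$, and moreover that the constants produced in part $A$ remain compatible with closing the induction (the square root in $V\le2a_n+\sqrt D\,n\qq B$ is what makes this possible, since $D$ grows only linearly in the inductive constant). The ``$-1/(n\gss)$'' main term of \refL{Lv2} and the analogous main term of \refL{L3} encode the global constraint $S_n=n-1$; they are exactly what makes the $O(n^{3/2}B^2)$-sized leading pieces of $A$ and of $B$ cancel one another.
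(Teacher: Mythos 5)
Your proposal is correct, and its analytic core coincides with the paper's: the exact covariance formula of \refL{Lcov}, the cancellation Lemmas \ref{L2}, \ref{L3} and \ref{Lv2}, the regime-by-regime estimates (small versus large $k$, $m$ small versus comparable to $k$, $k+m$ comparable to $n$), and a self-referential bound on $\Var F(\ctn)$ that is then closed. Your combined quantity $\Phi_{k,m}$ is exactly the paper's $A_1+A_2$, since $\min(k+m-1,n)+(n-k-m+1)_+=n$, so the hard estimates are literally the same. The two organizational differences are worth recording. First, the paper splits $f=f'+f''$ with $f'(T)=f(T)-\E f(\cT_{|T|})$ and $f''(T)=\E f(\cT_{|T|})$ and applies Minkowski's inequality, so that Case~1 (centred $f$) sees only the first term of \refL{Lcov} and Case~2 ($f$ a function of $|T|$) factors completely; you instead keep $f$ whole and split each $\E\bigpar{f_k(\cT)F_m(\cT)}$ into $\pi_k\Cov\bigpar{f(\ctk),F_m(\ctk)}$ plus a product of expectations which you merge with the other product terms via \refL{Lefkn}. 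This is an exact additive decomposition, so it is equivalent, and it even avoids the (harmless) doubling of the constant $B$ that Minkowski costs. Second, the paper closes the self-reference by the monotone bootstrap $\gb_n^2\le C B\gbx_n$ with $\gbx_n=\sup_{k\le n}\gb_k$ increasing, whereas you isolate the $k=n$ diagonal term (whose weight $nr_n\pi_n$ is exactly $1$), move it to the left, and solve the quadratic inequality $V^2\le 2a_nV+DnB^2$ inside a strong induction; since $D$ grows only linearly in the induction constant, the square root closes the induction. Both devices are sound and of comparable effort. The only places where your write-up is thinner than the paper are the off-diagonal regimes of part $B$ (the paper's Cases 2(ii)--(vi)), but you correctly name the lemmas and the bounds that handle each of them, so nothing essential is missing.
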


\begin{proof} 
Let $\mu_k := \E f(\cT_k) = \E f_k(\ctk)$.
By the decomposition $f(T)= f'(T)+f''(T)$ where
$f'(T):=f(T)-\mu_{|T|}$ and $f''(T):= \mu_{|T|}$, there is a
corresponding decomposition 
$F(\ctn)=F'(\ctn)+F''(\ctn)$. Minkowski's inequality $\Var(X+Y)\qq\le
\Var(X)\qq+\Var(Y)\qq$ (for any random variables $X$ and $Y$) shows that it
suffices to show the estimate for $F'(\ctn)$ and $F''(\ctn)$ separately.
In other words, it suffices to show \eqref{tvar<1}  in the two special cases
where either $\E f(\ctk)=0$ for every $k$ (so $f=f'$), 
or $f(T)=\mu_{|T|}$ depends on $|T|$ only (so $f=f''$).
Recall the notation $\pi_n:=\PP(|\cT|=n)$ from \eqref{pik}.

\pfcase{1}{$\E f(\ctk)=0$ for every $k$.}
In this case, 
since $f(\ctk)=f_k(\ctk)$,
also $\E f_k(\ctk)=0$ and
by \eqref{efkk2} and \eqref{lefkn} (and the trivial $F_k(\ctn)=0$ for $k>n$),
\begin{align}\label{ef0}
\E f_k(\cT)=0,
&&
\E F_k(\ctn)=0,
&&&
\E F_k(\cT)=\sumni \pi_n\E F_k(\ctn)=0,
\end{align}
for all $k\ge1$, $n\ge1$.
Hence,
\eqref{lcov} and \eqref{lcov2} yield the same result and
\refL{Lcov} reduces to, 
for  $m\le k\le n$,
\begin{equation}
  \begin{split}
\Cov( F_k(\ctn), F_m(\ctn))
&=n\frac{\PP\xpar{S_{n-k}=n-k}}{\PP(S_n=n-1)} \E\bigpar{f_k(\cT) F_m(\cT)}
\\
&=n\frac{\PP\xpar{S_{n-k}=n-k}}{\PP(S_n=n-1)}\pi_k \E\bigpar{f_k(\ctk) F_m(\ctk)}.
\end{split}
\end{equation}
Consequently,
using again $F_m(\ctk)=0$ for $m>k$ and $F_k(\ctk)=f_k(\ctk)$,
\begin{align}
\frac1n\Var F(\ctn)
&=
\frac1n\sum_{k=1}^n\sum_{m=1}^n\Cov( F_k(\ctn), F_m(\ctn)) \notag
\\
&=
\frac1n\sum_{k=1}^n\sum_{m=1}^k(2-\gd_{km})\Cov( F_k(\ctn), F_m(\ctn)) \nonumber
\\
&=
\sum_{k=1}^n\sum_{m=1}^k(2-\gd_{km})
\frac{\PP\xpar{S_{n-k}=n-k}}{\PP(S_n=n-1)}\pi_k \E\bigpar{f_k(\ctk) F_m(\ctk)}
\notag\\
&=
\sum_{k=1}^n
\frac{\PP\xpar{S_{n-k}=n-k}}{\PP(S_n=n-1)}\pi_k 
\E\bigpar{f_k(\ctk) \bigpar{2F(\ctk)-f_k(\ctk)}}
\label{emma1}\\
&\le
2\sum_{k=1}^n
\frac{\PP\xpar{S_{n-k}=n-k}}{\PP(S_n=n-1)}\pi_k 
\E\bigpar{f_k(\ctk) F(\ctk)}.
\label{emma2}
\end{align}
By \eqref{llt}--\eqref{snn}, \eqref{pett}, \eqref{ef0} and the \CSineq, 
this yields
\begin{equation*}
  \begin{split}
\frac1n\Var F(\ctn)
&\ll
\sum_{k=1}^n
\frac{n\qq}{(n-k+1)\qq k^{3/2}}\bigpar{\Var f_k(\ctk)}\qq 
\bigpar{\Var F(\ctk)}\qq.
\end{split}
\end{equation*}
Let us write $\Var f_k(\ctk)=\ga_k^2$ and $\Var F(\ctk)= k\gb_k^2$,
and let further 
\begin{equation}
  \label{B}
B:=\sup_k\ga_k+\sumki \frac{\ga_k}k
\end{equation}
and 
$\gbx_n:=\sup_{k\le n}\gb_k$. 
Then we have shown
\begin{equation}
  \begin{split}
\gb_n^2
&\ll
\sum_{k=1}^n
\frac{n\qq}{(n-k+1)\qq k^{3/2}}\ga_k k\qq\gb_k
\\&
\ll
 \sum_{k=1}^{n/2} \frac{\ga_k}k\gb_k
+ n\qqw \sum_{k=n/2}^{n} \frac{\ga_k}{(n-k+1)\qq}\gb_k
\\&
\ll
\gbx_n \sum_{k=1}^{\infty} \frac{\ga_k}k
+ n\qqw \gbx_n \sup_{k}\ga_k\sum_{k=n/2}^{n} \frac{1}{(n-k+1)\qq}
\\&
\ll
  B \gbx_n.
  \end{split}
\end{equation}
In other words, $\gb_n^2\le \CCx B\gbx_n$ for some $\CCx$.
The sequence $\gbx_n$ is increasing, and thus we obtain
\begin{equation}
  (\gbx_n)^2 = \sup_{1\le m\le n} \gb_m^2
\le \CCx B \gbx_n.
\end{equation} 
Consequently, recalling that $\gb_n$ and $\gbx_n$ are finite by
\refR{Rfinite},
\begin{equation}
  \gb_n \le \gbx_n \le \CCx B,
\end{equation}
\ie, $\Var F(\ctn)=n\gb_n^2 \le n\CCx^2 B^2$,
which, recalling \eqref{B}, completes the proof of Case 1.

\pfcase{2}{$f(T)=\mu_{|T|}$.}
In this case, $f(\ctk)=f_k(\ctk)=\mu_k$, 
and \eqref{ekm} and \eqref{lefkn} yield
\begin{equation*} 
  \begin{split}
\E \bigpar{f_k(\cT)F_m(\cT)}
&
=\PP(|\cT|=k)\E \bigpar{f_k(\ctk)F_m(\ctk)}
=\PP(|\cT|=k)\mu_k\E\bigpar{F_m(\ctk)}
\\&
=\E f_k(\cT)k \frac{\PP\xpar{S_{k-m}=k-m}}{\PP(S_k=k-1)} \E{f_m(\cT)}.
  \end{split}
\end{equation*}
Thus \refL{Lcov} can be written,
for $n\ge k\ge m$ 
(and assuming  $\PP(S_k=k-1)>0$;
otherwise $\PP(|\cT|=k)=0$ and $F_k(\cT_n)=0$ a.s., so we may ignore this case), 
{\multlinegap=0pt
\begin{multline}\label{cov2}
\frac1n\Cov( F_k(\ctn), F_m(\ctn))
\\\shoveleft{\quad
=\E{f_k(\cT)}\E f_m(\cT)
\biggl(
\frac{\PP\xpar{S_{n-k}=n-k}}{\PP(S_n=n-1)} \times}
\\\shoveright{
\biggpar{
k \frac{\PP\xpar{S_{k-m}=k-m}}{\PP(S_k=k-1)}
-\min\xpar{k+m-1,n}
\frac{\PP\xpar{S_{n-m}=n-m}}{\PP(S_n=n-1)}}}
\\\shoveleft{\quad\phantom{=\E{f_k(\cT)}\E f_m(\cT)\quad}
+(n-k-m+1)_+
\times}
\\\shoveright{
\quad\lrpar{\frac{\PP\xpar{S_{n-k-m}=n-k-m+1}}{\PP(S_n=n-1)} 
-\frac{\PP\xpar{S_{n-k}=n-k}}{\PP(S_n=n-1)} 
\frac{\PP\xpar{S_{n-m}=n-m}}{\PP(S_n=n-1)}}
\biggr)}
\\\shoveleft{\quad
=: (A_1+A_2)\E{f_k(\cT)}\E f_m(\cT).\hfill}
\end{multline}}%
We take absolute values and sum over all $m\le k\le n$
(the terms with $k>m$ are covered by symmetry). Cancellations inside $A_1$
and $A_2$ will be important, but  we treat the two terms $A_1$
and $A_2$ separately.

For convenience, we write
\begin{equation}
  x_k := |\mu_k|/\sqrt k.
\end{equation}
Thus, by \eqref{ek} and \eqref{pett}, 
\begin{equation}\label{blid}
  \E f_k(\cT) =\PP(|\cT|=k)\mu_k 
=O\bigpar{|\mu_k|/k\qqc}
=O(x_k/k).
\end{equation}
Consequently, by \eqref{Fsum}, \eqref{cov2} and symmetry, we have
\begin{equation}\label{jb}
  \frac1n\Var F(\ctn)
\ll \sum_{\substack{m\le k\\ k\le n}} (|A_1|+|A_2|) \frac{x_kx_m}{km}.
\end{equation}
To estimate this sum, we consider several cases.
We define
\begin{align}
\BB & := \sumki \frac{x_k}{\sqrt k} \BBdef\BBd 
= \sumki \frac{|\mu_k|}{k} 
,\\
\BB & := \sumki \frac{x_k}k \BBdef\BBa  
= \sumki \frac{|\mu_k|}{k\qqc} 
,\\
\BB &:= \sup_{n\ge1} \frac1{\sqrt n} \sumkn x_k \BBdef\BBb 
= \sup_{n\ge1} \frac1{\sqrt n} \sumkn \frac{|\mu_k|}{\sqrt k}
,\\
\BB & := \sup_n \sum_{n/2}^n \frac{x_k}{\sqrt{n-k+1}} \BBdef\BBc
\ll \sup_n \frac1{\sqrt n}\sum_{n/2}^n \frac{|\mu_k|}{\sqrt{n-k+1}} 
.\end{align}

\pfcasex{2}{$A_1$ for $m\le k/2$, $k\le n/2$.}
By \refL{L2}(i) (or \eqref{llt}--\eqref{snn})
and  \refL{L3}(i),
\begin{equation}\label{a1i}
|A_1| \ll { m+{k\qq}}.
\end{equation}
Hence, the contribution to \eqref{jb} from these terms is
\begin{equation}\label{sw1i}
  \begin{split}
\sum_{\substack{m\le k/2\\k\le n/2}}|A_1|\frac{x_kx_m}{km}
&\ll  \sum_{k\le n}\frac{x_k}{k}\sum_{m\le k}x_m 
+
\sum_{k\le n} \frac{x_k}{\sqrt k}
\sum_m \frac{x_m}m	
\\
&\le
\sum_{k\le n} \frac{x_k}{\sqrt k} (\BBb+\BBa)
\le \BBd(\BBb+\BBa).
  \end{split}
\end{equation}

\pfcasex{2}{$A_1$ for $m\le k/2$, $n/2 <k\le n$.}
By Lemmas \ref{L2}(ii)
and  \ref{L3}(i),
\begin{equation}\label{a1ii}
|A_1| \ll \frac{n\qq}{(n-k+1)\qq}\bigpar{ m+{k\qq}}
\le \frac{n\qq m}{(n-k+1)\qq}
+ \frac{n}{(n-k+1)\qq}.
\end{equation}
Hence, the contribution to \eqref{jb} from these terms is
\begin{equation}\label{sw1ii}
  \begin{split}
\sum_{\substack{m\le k/2\\n/2 <k\le n}}|A_1|\frac{x_kx_m}{km}
&\ll   \sum_{n/2<k\le n} \frac{x_k}{(n-k+1)\qq}
\lrpar{\sum_{m\le n}\frac{x_m}{n\qq} + \sum_{m} \frac{x_m}m}
\\  
&\le \BBc\bigpar{\BBb+\BBa}.	
  \end{split}
\end{equation}

\pfcasex{2}{$A_1$ for $k/2 < m\le k$, $k\le n/2$.}
By \refL{L2}(i) (or \eqref{llt}--\eqref{snn})
and  \refL{L3}(ii),
\begin{equation}\label{a1iii}
|A_1| 
\ll \frac{k\qqc}{(k-m+1)\qq}
\ll \frac{k\qq m}{(k-m+1)\qq}.
\end{equation}
Hence, the contribution to \eqref{jb} from these terms is
\begin{equation}\label{sw1iii}
\sum_{\substack{k/2<m\le k\\k\le n/2}}|A_1|\frac{x_kx_m}{km}
\ll 
\sum_{\substack{k/2<m\le k\\k\le n/2}}\frac{x_kx_m}{\sqrt k \sqrt{k-m+1}}
\le \BBc\sum_{k\le n/2}\frac{x_k}{\sqrt k}
\le \BBc\BBd.
\end{equation}

\pfcasex{2}{$A_1$ for $k/2 < m\le k$, $n/2 < k\le n$.}
By Lemmas \ref{L2}(ii)
and  \ref{L3}(ii),
noting $k \le n \ll m $,
\begin{equation} \label{a1iv}
|A_1| 
\ll \frac{n\qq}{(n-k+1)\qq}
\frac{k\qqc}{(k-m+1)\qq}
\ll \frac{km}{\sqrt{n-k+1}\sqrt{k-m+1}}.
\end{equation}
Hence, the contribution to \eqref{jb} from these terms is
\begin{equation}\label{sw1iv}
  \begin{split}
\sum_{\substack{k/2 <m\le k\\n/2 <k\le n}}|A_1|\frac{x_kx_m}{km}
\ll
\sum_{n/2<k\le n} \frac{x_k}{\sqrt{n-k+1}}
\sum_{k/2 <m\le k} \frac{x_m}{\sqrt{k-m+1}}
\le \BBc^2.	
  \end{split}
\raisetag{\baselineskip}
\end{equation}

\pfcasex{2}{$A_2$ for $m\le k$ and $m+k\le n/2$.}
In this case, \refL{Lv2}(i) yields 
\begin{equation}\label{a2i}
  |A_2| \ll(n-k-m+1)\Bigpar{\frac1n+\frac{k+m}{n^{3/2}}+\frac{km}{n^2}}
 \le 1+\frac{2k}{n^{1/2}}+\frac{km}{n}
\end{equation}
and  the contribution to  \eqref{jb} from these terms is dominated by
\begin{equation}\label{sw2i}
  \begin{split}
\sum_{k+m\le n/2}|A_2|\frac{x_kx_m}{km}
&\ll
\biggpar{ \sum_{k\le n} \frac{x_k}k}^2
+\frac{1}{n\qq} \sum_{k\le n} {x_k}
\sum_{m\le n} \frac{x_m}m
+\frac{1}n\biggpar{\sum_{k\le n}{x_k}}^2
\\&
\le \BBa^2+\BBb\BBa + \BBb^2.	
  \end{split}
\raisetag{\baselineskip}
\end{equation}

\pfcasex{2}{$A_2$ for $m\le k \le n$ and $m+k> n/2$.}
Note that $A_2$ vanishes unless $n\ge k+m$.
In this case, \refL{Lv2}(ii) yields 
\begin{equation}\label{a2ii}
|A_2|\ll
\frac{m\, n\qq}{(n-k-m+1)^{1/2}}
+{n\qq}.
\end{equation}
Since $k+m> n/2$ and $k\ge m$ imply $k> n/4$,
the contribution from these terms to \eqref{jb} is at most
\begin{equation}\label{sw2ii}
\begin{split}
   \sum_{k+m > n/2} |A_2|\frac{x_kx_m}{km}
&\ll
  \sum_{k= n/4}^n 
\frac{ x_k}{\sqrt n}
\sum_{m=1}^{n-k} 
\frac{ x_m}{(n-k-m+1)^{1/2}}
+  \sum_{k= n/4}^n 
\frac{ x_k}{\sqrt n}
\sum_{m=1}^{n-k} 
\frac{x_m}{m}
\\
&\ll 
\BBb(\BBb+\BBc)+\BBb\BBa.
\end{split}
\raisetag{\baselineskip}
\end{equation}

\pfcasexx{Conclusion} 
Consequently, \eqref{jb} together with
\eqref{sw1i}, \eqref{sw1ii}, \eqref{sw1iii}, \eqref{sw1iv},
\eqref{sw2i} and \eqref{sw2ii} show that 
\begin{equation}\label{jb+}
  \frac1n\Var F(\ctn)
\ll \bigpar{\BBd+\BBa+\BBb+\BBc}^2.
\end{equation}
Furthermore,
trivially $\BBa\le\BBd$ and
$\BBb,\BBc\ll \sup_k|\mu_k|$,
and $|\mu_k|\le \sqrt{\E f(\ctk)^2}$.
Hence, \eqref{jb+} proves
\eqref{tvar<1} in Case 2, which completes the proof.
\end{proof}

\begin{remark}
  The proof actually yields, noting that 
$\BBb\ll\BBc$,
  the slightly stronger
{\multlinegap=0pt 
\begin{multline*}
\Var\bigpar{F(\ctn)}\qq
\le \CC n\qq
\biggpar{
\sup_k \frac1{\sqrt k}\sum_{j=k/2}^k \frac{\sqrt{\E f(\cT_j)^2}}{\sqrt{k-j+1}}
+\sumki \frac{\sqrt{\E f_k(\ctk)^2}}{k}} .
  \end{multline*}}%
\end{remark}

\begin{remark}\label{Rbetter}
\refE{EJW} below shows that the term $\sum_k(\E f_k(\ctk)^2)\qq/k$ in
\eqref{tvar<1} cannot be improved in general. In the case $f(T)=\mu_{|T|}$,
there is, however,  a minor improvment
in the following
version of \refT{Tvar<},
provided we have more than a second moment of $\xi$.
This theorem implies, as mentioned in \refR{RT1}, 
a corresponding minor improvement of the condition
  \eqref{t1v2} in \refT{T1}; we omit the details.
(For example, it allows $f(T)=1/\log|T|$.)
We do not know whether $\Var F(\ctn)=O(n)$ for every bounded $f(T)$ that
depends on $|T|$ only.
\end{remark}

\begin{theorem}  \label{Tvar<2}
Suppose that $\E \xi^{2+\gd}<\infty$ with $\gd>0$ and 
let $\mu_k := \E f(\cT_k) = \E f_k(\ctk)$.
Then
{\multlinegap=0pt
\begin{multline*}
\Var\bigpar{F(\ctn)}\qq
\\\le \CC n\qq
\biggl(
\sup_k\bigpar{\E f_k(\ctk)^2}\qq
+\sumki \frac{(\Var f_k(\ctk))\qq}{k}	
+ 
\biggpar{\sumki \frac{\mu_k^2}{ k}}\qq   
\biggr).
  \end{multline*}}%
\end{theorem}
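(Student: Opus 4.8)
The plan is to follow the two-case structure of the proof of \refT{Tvar<}, keeping Case~1 untouched and reworking Case~2 so that the $\ell^1$ quantity $\sumki|\mu_k|/k$ appearing there is replaced by the $\ell^2$ quantity $D:=\bigpar{\sumki\mu_k^2/k}\qq$; writing $x_k:=|\mu_k|/\sqrt k$ as in that proof, $D=\bigpar{\sumki x_k^2}\qq$. First I would reduce to $0<\gd\le1$ (replacing $\gd$ by $\min(\gd,1)$), and split $f=f'+f''$ with $f'(T):=f(T)-\mu_{|T|}$, $f''(T):=\mu_{|T|}$, exactly as in \refT{Tvar<}; by Minkowski's inequality it suffices to bound $\Var F'(\ctn)\qq$ and $\Var F''(\ctn)\qq$ separately. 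Since $\E f'(\ctk)=0$, Case~1 of the proof of \refT{Tvar<} applies to $f'$ verbatim and gives $\Var F'(\ctn)\qq\ll n\qq\bigpar{\sup_k(\Var f_k(\ctk))\qq+\sumki(\Var f_k(\ctk))\qq/k}$, which is dominated by the first two terms of the asserted bound because $\Var f_k(\ctk)\le\E f_k(\ctk)^2$. Thus the whole issue is $f''(T)=\mu_{|T|}$, \ie{} Case~2.

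In Case~2 I would keep the decomposition \eqref{cov2}, $\tfrac1n\Cov(F_k(\ctn),F_m(\ctn))=(A_1+A_2)\E f_k(\cT)\E f_m(\cT)$, the bound $\tfrac1n\Var F(\ctn)\ll\sum_{m\le k\le n}(|A_1|+|A_2|)x_kx_m/(km)$, and the same subdivision of the index set (the ranges $m\le k/2$ versus $k/2<m\le k$, crossed with $k\le n/2$ versus $n/2<k\le n$, and, for $A_2$, $k+m\le n/2$ versus $k+m>n/2$). The target is to show this double sum is $\ll\bigpar{D+\sup_k|\mu_k|}^2$: then $\Var F''(\ctn)\qq\ll n\qq\bigpar{D+\sup_k|\mu_k|}$, and combining with Case~1 through Minkowski, together with $|\mu_k|\le(\E f_k(\ctk)^2)\qq$ and $\Var f_k(\ctk)\le\E f_k(\ctk)^2$, gives the theorem. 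Two modifications of the argument of \refT{Tvar<} are needed. \emph{(a)} In the ranges $m\le k/2$, use \refL{L3}(iii) in place of \refL{L3}(i) --- this is the \emph{only} place the hypothesis $\E\xi^{2+\gd}<\infty$ is used --- so that the term $k\qq$ in the estimate for $|A_1|$ is improved to $k^{(1-\gd)/2}$. \emph{(b)} Estimate the arising sums by the \CSineq{} rather than by factoring out suprema. The basic estimates are $\sum_{m\le k}x_m\le D\sqrt k$, $\sumki x_k/k\ll D$, $\sumki x_k/k^{(1+\gd)/2}\ll D$ (the last convergent precisely because $\gd>0$), $\sup_n n\qqw\sum_{k\le n}x_k\le D$, and $\sup_n\sum_{n/2\le k\le n}x_k(n-k+1)\qqw\ll\sup_k|\mu_k|$ (since $\sqrt k\asymp\sqrt n$ on that range). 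With these, the ``$m$''-part of $|A_1|$ on $m\le k/2\le k\le n/2$ contributes $\sum_k\tfrac{x_k}k\sum_{m\le k/2}x_m$, which is $\ll D^2$ by the \CSineq{} applied with the weight $(k/m)^\eps$, $0<\eps<1$; the ``$k^{(1-\gd)/2}$''-part contributes $\bigpar{\sumki x_k/k^{(1+\gd)/2}}\bigpar{\sum_m x_m/m}\ll D^2$; and in all the remaining $A_1$- and $A_2$-cases the bounds of Lemmas~\refand{Lv2}{L3} produce only the quantities just listed, hence only $O(D^2)+O(D\sup_k|\mu_k|)+O((\sup_k|\mu_k|)^2)$.

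The step I expect to be the main obstacle is the range $k/2<m\le k$, $k\le n/2$: here \refL{L3}(ii) --- which is \emph{not} sharpened by the extra moment --- gives only $|A_1|\ll k\qqc(k-m+1)\qqw$, and the weight $(k-m+1)\qqw$ has $\ell^1$-norm $\asymp k\qq$ over this $m$-range, too large to be pulled out of the $x_m$-sum; the proof of \refT{Tvar<} simply bounds the contribution by a constant times $\sumki|\mu_k|/k$, which is the forbidden $\ell^1$ quantity. The remedy is to exploit that $m\asymp k$ there and use $x_kx_m\ll(\mu_k^2+\mu_m^2)/k$: the $\mu_k^2$-piece then sums to $\sum_k\tfrac{\mu_k^2}{k\qqc}\sum_{k/2<m\le k}(k-m+1)\qqw\asymp\sum_k\mu_k^2/k=D^2$, and, after reversing the order of summation, the $\mu_m^2$-piece sums to $\sum_m\mu_m^2\sum_{m\le k<2m}\tfrac1{k\qqc(k-m+1)\qq}\asymp\sum_m\mu_m^2/m=D^2$. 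The cases with $n/2<k\le n$ need the same care but are ultimately absorbed into $\sup_n\sum_{n/2\le k\le n}x_k(n-k+1)\qqw$, hence into $\sup_k|\mu_k|$. Collecting all the pieces completes Case~2; with Case~1 this yields the stated inequality. The gain over \refT{Tvar<} --- which, by \refR{Rbetter}, then permits \eg{} $f(T)=1/\log|T|$ --- is exactly this passage from $\sumki|\mu_k|/k$ to $D$.
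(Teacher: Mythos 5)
Your proposal is correct and follows essentially the same route as the paper: Case~1 and the cases 2(ii),(iv),(v),(vi) are left untouched, \refL{L3}(iii) replaces \refL{L3}(i) in the range $m\le k/2$, and the two problematic sums (Cases 2(i) and 2(iii)) are converted from the $\ell^1$ quantity $\sumki|\mu_k|/k$ to $\bigpar{\sumki\mu_k^2/k}\qq$ by symmetrizing products into squares. Your Schur-test with weight $(k/m)^\eps$ and your bound $x_kx_m\ll(\mu_k^2+\mu_m^2)/k$ are just alternative packagings of the paper's Hilbert-inequality step and its reindexing $k=m+j$ followed by the \CSineq, so the two arguments are interchangeable.
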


\begin{proof}
We use the notation of the proof of
 \refT{Tvar<}, and 
define $\BB$ by
\begin{align}
	\BBx^2 & := \sumki x_k^2   \BBdef\BBq  
=\sumki \frac{\mu_k^2}{k}.
  \end{align}
Observe that $\BBa\ll\BBq$ by the \CSineq.
We modify the proof of \refT{Tvar<}.  
Case 1 is as before, and so are Case 2(ii),(iv),(v),(vi),
leaving only two cases where we have to replace $\BBd$.

\pfcasexx{Case 2\,\textup{(i)}}
Using \refL{L3}(iii) instead of  \refL{L3}(i),
we obtain
\begin{equation}\label{a1ix}
|A_1| \ll  m+ k^{(1-\gd)/2}.
\end{equation}
Hence, the contribution to \eqref{jb} from these terms is
\begin{equation}\label{sw1ix}
  \begin{split}
\sum_{\substack{m\le k/2\\k\le n/2}}|A_1|\frac{x_kx_m}{km}
&\ll  \sum_{k,m}\frac{x_k x_m}{\max\xpar{k,m}}
+
\sum_{k} \frac{x_k}{k^{(1+\gd)/2}}
\sum_m \frac{x_m}m.	
  \end{split}
\end{equation}
The second term on the \rhs{} is $ \ll\sum_k x_k^2=\BBq^2$ by two
applications of the \CSineq.
The same holds for the first term, which
says that the infinite matrix 
$(1/\max\set{k,m})_{k,m=1}^\infty$ defines a bounded operator on $\ell^2$;
this follows from Hilbert's inequality
$\sum_{k,m} x_kx_m/(k+m) \le \pi\sum_kx_k^2$, 
see for example \cite[Chapter IX]{HLP}.

\pfcasexx{Case 2\,\upshape{(iii)}}
We use again \eqref{a1iii}, but in \eqref{sw1iii} we set $k=m+j$ and observe
that $j<k/2<m$ and thus, using the \CSineq,
{\multlinegap=0pt
\begin{multline*}
\sum_{\substack{k/2<m\le k\\k\le n/2}}\frac{x_kx_m}{\sqrt k \sqrt{k-m+1}}
\le \sum_{j=0}^\infty\frac{1}{\sqrt{j+1}} \sum_{m=j+1}^\infty
\frac{x_{m+j}x_m}{(m+j)^{1/4}m^{1/4} }
\\
\le \sum_{j=0}^\infty\frac{1}{\sqrt{j+1}}\sum_{m=j+1}^\infty
\frac{x_m^2}{m^{1/2}} 
=\sum_{m=1}^\infty\frac{x_m^2}{m^{1/2} } \sum_{j=0}^{m-1}\frac{1}{\sqrt{j+1}}
\ll 
\sum_{m=1}^\infty{x_m^2}=\BBq^2.		  
	\end{multline*}}

The different terms for Case 2 thus all are dominated by 
$(\BBa+\BBb+\BBc+\BBq)^2 \ll (\BBc+\BBq)^2$, which completes the proof.
\end{proof}

\begin{remark}
In \eqref{sw1ix} we used Hilbert's inequality.
One can also see in other ways that
$(1/\max\set{k,m})_{k,m=1}^\infty$ defines a bounded operator on $\ell^2$;
a much more general result 
is shown in \cite{Mazya} and \cite[Theorem 3.1]{SJ139} (for the continuous
case, which implies the discrete).
\end{remark}

In the special case of (weakly) local functionals, we can improve \refT{Tvar<}.
For simplicity we consider only bounded functionals.

\begin{theorem}\label{Tvarlocal}  \CCreset
Suppose that $f(T)$ is a bounded and weakly local functional on $\st$
with cut-off $M$.
Let
$A_k:=\sup\set{|f(T)|:|T|=k}$  
and $\mu_k := \E f(\cT_k)$. 
Then,
\begin{equation*}
\Var\bigpar{F(\ctn)}\qq
\le \CC n\qq
\biggpar{\Bigpar{\sup_k A_k\sup_k k^{-1/4}A_k}\qq
+\sup_k|\mu_k|
+\sumki \frac{|\mu_k|}{k}} 
\end{equation*}
where the constant $\CCx$ may depend on the cut-off $M$ but not otherwise
on $f$. 
\end{theorem}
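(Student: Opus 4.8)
The plan is to peel off the part of $f$ that depends on the size alone, exactly as in the proof of \refT{Tvar<}. Write $f=f'+f''$ with $f'(T):=f(T)-\mu_{|T|}$ and $f''(T):=\mu_{|T|}$, so that $F(\ctn)=F'(\ctn)+F''(\ctn)$; by Minkowski's inequality $\Var(X+Y)\qq\le\Var(X)\qq+\Var(Y)\qq$ it suffices to bound $\Var F''(\ctn)$ and $\Var F'(\ctn)$ separately. The functional $f''$ is bounded, with $|\mu_k|=|\E f(\ctk)|\le A_k$, and depends on $|T|$ only, so \refT{Tvar<} applied to $f''$ already gives $\Var\bigpar{F''(\ctn)}\qq\le C n\qq\bigpar{\sup_k|\mu_k|+\sumki|\mu_k|/k}$, which accounts for the last two terms of the asserted bound. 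The functional $f'$ is again bounded and weakly local with cut-off $M$, with $\sup\set{|f'(T)|:|T|=k}\le2A_k$, but now $\E f'(\ctk)=0$ for every $k$, hence also $\mu':=\E f'(\cT)=\sumni\pi_n\E f'(\ctn)=0$. It therefore remains to prove: for a bounded weakly local functional $f$ with cut-off $M$ such that $\E f(\ctk)=0$ for all $k$ (so $\mu=\E f(\cT)=0$ too), writing $A_k:=\sup\set{|f(T)|:|T|=k}$, one has $\Var\bigpar{F(\ctn)}\le C\,n\,(\sup_jA_j)\bigpar{\sup_jj\qqqqw A_j}$ with $C$ depending on $M$ but not otherwise on $f$.

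For this centered case I would return to the identity \eqref{emma1} obtained in Case~1 of the proof of \refT{Tvar<},
\[
\frac1n\Var F(\ctn)=\sum_{k=1}^n\frac{\PP\xpar{S_{n-k}=n-k}}{\PP(S_n=n-1)}\,\pi_k\,\E\bigpar{f_k(\ctk)\bigpar{2F(\ctk)-f_k(\ctk)}},
\]
valid because $\E f_k(\ctk)=0$, and bound the prefactor $\frac{\PP(S_{n-k}=n-k)}{\PP(S_n=n-1)}\pi_k$ by $C n\qq(n-k+1)\qqw k\qqcw$ using \refL{L2} and \eqref{pett}. The diagonal term is harmless: $\bigabs{\E f_k(\ctk)^2}\le A_k^2$ and $\sumki k\qqcw A_k^2\le(\sup_jA_j)\sumki k^{-5/4}\bigpar{k\qqqqw A_k}\ll(\sup_jA_j)\bigpar{\sup_jj\qqqqw A_j}$. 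The crux is a bound on the cross term $\E\bigpar{f_k(\ctk)F(\ctk)}$ that is sharper than the \CSineq{} bound $A_k\Var\bigpar{F(\ctk)}\qq$ used in \refT{Tvar<}, which is too lossy here since $\Var F(\ctk)$ need not be $o(k)$ and $\sumki A_k/k$ diverges for bounded $f$. The idea is to exploit that $f_k(\ctk)$ is a (deterministic, $k$-dependent) function of the truncation $\ctk\MM$ alone, with mean $0$. Conditioning on $\ctk\MM$, and recalling from the proof of \refL{Lfloc} that, given $\ctk\MM=T$, the part of $\ctk$ below generation $M-1$ is a forest of $w_M(T)$ independent copies of $\cT$ conditioned on total size $k-z_{M-1}(T)$, I expect the conditional mean $\E\bigpar{F(\ctk)\mid\ctk\MM}$ to vary by only $O_M\bigpar{\sup_jA_j}$ around a ($k$-dependent) constant: the contribution to $F(\ctk)$ of the $O_M(1)$ nodes of depth $\le M$ is a bounded number of terms each of absolute value $\le\sup_jA_j$, while the contribution of the deeper nodes has conditional mean $\approx\mu\cdot(\text{subforest size})=0$ — using $\mu=0$ and \refT{T1}\ref{T1e} — up to corrections that, precisely because $\mu=0$, depend only weakly on $w_M(\ctk\MM)$ and $z_{M-1}(\ctk\MM)$, exactly as this is quantified in \refL{Lfloc} through \refL{LLT}, the Dwass--Otter formula \eqref{cant1}, and \refL{Lzw}. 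Since $\E f_k(\ctk)=0$, the cross term then equals $\E\bigpar{f_k(\ctk)\bigpar{\E(F(\ctk)\mid\ctk\MM)-c}}$ for any constant $c$, which gives $\bigabs{\E\bigpar{f_k(\ctk)F(\ctk)}}\le C A_k\sup_jA_j$ with $C$ depending on $M$.

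Granted this, the displayed identity gives $\frac1n\Var F(\ctn)\ll\sum_k n\qq(n-k+1)\qqw k\qqcw A_k\sup_jA_j$; splitting the sum at $k=n/2$, using $(n-k+1)\qqw\ll n\qqw$ together with $\sumki k\qqcw A_k\ll\sup_jj\qqqqw A_j$ for the head, and $k\qqcw\ll n\qqcw$ together with $A_k\le n\qqqq\sup_j(j\qqqqw A_j)$ (as $k\le n$) and $\sum_{n/2<k\le n}(n-k+1)\qqw\ll n\qq$ for the tail, one gets in both ranges $\frac1n\Var F(\ctn)\ll(\sup_jA_j)\bigpar{\sup_jj\qqqqw A_j}$ with an $M$-dependent constant — the claimed bound for the centered part. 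Combining it with the bound for $F''$ via Minkowski's inequality completes the proof. The main obstacle is precisely the asserted concentration of $\E\bigpar{F(\ctk)\mid\ctk\MM}$: one must show that its dependence on the ``boundary data'' $\bigpar{w_M(\ctk\MM),z_{M-1}(\ctk\MM)}$, and on the shape of $\ctk\MM$ more generally, is $O_M(\sup_jA_j)$ rather than the a priori order $k\qq$, and this is where $\mu=0$ and the refined local-limit estimates of Sections \ref{Sexp}--\ref{Svar} (in particular \refL{Lzw} and the argument of \refL{Lfloc}) have to be pushed. If that $O_M(\sup_jA_j)$ estimate should resist a direct proof, the fallback is to carry the quantity $\gbx_n:=\sup_{k\le n}\bigpar{\Var F(\ctk)/k}\qq$ through the whole estimate and close the recursion by a bootstrap, exactly as in the proof of \refT{Tvar<}.
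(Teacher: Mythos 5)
Your skeleton is the right one — the same $f=f'+f''$ splitting, the reduction to Case 1 of \refT{Tvar<} via \eqref{emma1}, and the correct identification that everything hinges on improving the cross-term bound to $\bigabs{\E\bigpar{f_k(\ctk)F(\ctk)}}\ll A_k\sup_jA_j$ — and your final summation over $k$ is carried out correctly. But the one estimate you flag as "the main obstacle" is exactly the step you have not proved, and the route you sketch toward it (approximate concentration of $\E\bigpar{F(\ctk)\mid\ctk\MM}$ via \refL{Lzw}, the Dwass--Otter formula and local-limit corrections) is both harder than necessary and not what makes the argument work. The key observation you are missing is that no approximation is needed at all: write $F(\ctk)=S_1+S_2$ with $S_1:=\sum_{d(v)<M}f(\cT_{k,v})$ and $S_2:=\sum_{d(v)\ge M}f(\cT_{k,v})$. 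Conditionally on $\ctk\MM$ \emph{and} on the sizes $(n_v)$ of the subtrees rooted at the depth-$M$ nodes, those subtrees are independent copies of $\cT_{n_v}$, and $S_2$ is the sum of $F(\cT_{n_v})$ over them. Since $\E f(\cT_m)=0$ for every $m$ forces $\E f_j(\cT)=0$ for every $j$ and hence, by \refL{Lefkn}, $\E F(\cT_m)=0$ \emph{exactly} for every $m$, one gets $\E\bigpar{S_2\mid\ctk\MM}=0$ exactly; as $f_k(\ctk)$ is $\ctk\MM$-measurable (weak locality plus fixed size $k$), $\E\bigpar{f_k(\ctk)S_2}=0$. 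There is no correction term depending on $w_M$ or $z_{M-1}$ to control, and $\mu=0$ enters only through the exact identity $\E F(\cT_m)=0$, not through \refT{T1}\ref{T1e} or any local limit theorem.

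What remains is the shallow part, and here your phrasing hides a second small slip: the number of nodes of depth $<M$ is \emph{not} deterministically $O_M(1)$ (outdegrees are unbounded in general), so you cannot say $S_1$ is "a bounded number of terms". What saves you is that only its expectation is needed: $|S_1|\le(\sup_jA_j)\sum_{j<M}w_j(\ctk)$ and $\E w_j(\ctk)=O(j)$ uniformly (as cited in the proof of \refL{Lfloc}), whence $\E\bigabs{f_k(\ctk)S_1}\le A_k\,\E|S_1|=O_M\bigpar{A_k\sup_jA_j}$. Combining the two pieces gives the cross-term bound, and your own concluding computation then finishes the proof; the bootstrap fallback you mention is not needed. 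So the proposal is salvageable, but as written the central estimate is asserted rather than proved, and the mechanism you propose for it is not the one that works.
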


\begin{proof} 
  We modify the proof of \refT{Tvar<}; Case 2 is the same so we consider
  Case 1 only. Thus assume that $\E f(\ctk)=0$ for each $k$.
We have
\begin{equation}
F(\ctk)
= \sum_{v\in\ctk}f(\cT_{k,v})
= \sum_{d(v)< M}f(\cT_{k,v})
+ \sum_{d(v)\ge M}f(\cT_{k,v})
=: S_1+S_2.
\end{equation}
Let again $w_j(T)$ be the number of nodes at depth $j$.
Since $|f|$ is bounded by $A$, the  sum $S_1$ is bounded by 
\begin{equation}\label{s1}
  |S_1| \le A\sum_{j=0}^{M-1} w_j(\ctk).
\end{equation}
As said in the proof of \refL{Lzw}, $\E w_j(\ctk) = O(j)$ for each $j$, 
see \cite{MM} and \cite[Theorem 1.13]{SJ167}.
Hence \eqref{s1} implies $\E|S_1| =O(A)$ and 
\begin{equation}
  \label{tib}
\E|f_k(\ctk)S_1|\le \E|A_kS_1| =O(A_kA).
\end{equation}
For $S_2$ we condition on $\ctk\MM$ and on the sizes $n_v$ of  
the $w_M(\ctk)$ subtrees $T_v$ with $d(v)=M$. 
Given this, the forest $\ctk\setminus\ctk\MMi$ consists of
independent copies of random trees $\cT_{d_v}$, and $S_2$ is the sum of
$f(\cT_{k,v})$ over all nodes in these trees, which equals the sum of
$F$ over these trees. Since each $\E F(\cT_{d_i})=0$ by \refL{Lefkn}, it
follows that the conditional expectation $\E\bigpar{S_2\mid \ctk\MM}=0$.
However, $f_k(\ctk)$ depends only on $\ctk\MM$, and thus
$\E \bigpar{f_k(\ctk) S_2}=0$. 
Consequently, \eqref{tib} yields
\begin{equation}\label{tibb}
\E \bigpar{f_k(\ctk)F(\ctk)}=O(A_k A) .
\end{equation}

By \eqref{emma2}, \eqref{tibb} and estimates using \eqref{llt}--\eqref{snn}
and \eqref{pett} as before,
\begin{equation*}
  \begin{split}
\frac1n\Var F(\ctn)
\ll \sum_{k=1}^{n/2} k^{-3/2}A_kA+\sup_{k>n/2}k\qqw A_kA, 	
  \end{split}
\end{equation*}
and the result follows. (The exponent $-1/4$ can be replaced by any exponent
$>-1/2$.) 
\end{proof}

\begin{example} \ccreset \label{EJW}
We show by an example, where we make correlations between different
$F_k(\ctn)$ large, that the condition \eqref{t1v2} in general cannot be
relaxed. We 
consider any offspring distribution such that $p_0,p_1,p_2>0$. 

Suppose that $(\ga_k)_3^\infty$ is a given sequence of
positive numbers. Define $f_1=f_2:=0$ and let
$g_3(T):=\#\set{\text{leaves in  $T$}}-c_0$ when $|T|=3$, 
where the constant $c_0$ is chosen such that
$\E g_3(\cT_3)=0$. (Note that $\cT_3$ has one or two leaves, 
both with positive probability,
so $g_3(\cT_3)\neq0$.) 
Let $f_3(T)=\au_3 g_3(T)$ for a constant $\au_3>0$ such that
$\Var f_3(\cT_3)=\ga_3^2$. 
Continue recursively. If we have chosen
$f_1,\dots,f_{k-1}$, let for a tree $T$ with $|T|=k$, 
\begin{equation}\label{gk}
g_k(T):=\sum_{j=1}^{k-1}F_j(T)=\sumy_{v\in  T} f_{|T_v|}(T_v),   
\end{equation}
where $\sumy$ denotes summation
over all nodes except the root.
Define 
\begin{equation}
  \label{fs}
f_k(T)=\au_k g_k(T), \qquad T\in\st_k, 
\end{equation}
for a constant $\au_k>0$ such that
$\Var f_k(\cT_k)=\ga_k^2$. 
Note that, by induction, and \refL{Lefkn},
$\E f_k(\cT_k)=\E g_k(\cT_k)=0$ for every $k$.

Consider $f=\sum_k f_k$ and the corresponding $F=\sum_k F_k$. 
By construction,
for  a tree $T$ with $|T|=k>3$,
\begin{equation}
F(T)=f_k(T)+g_k(T)=(1+\au_k)g_k(T).  
\end{equation}
If we let $\bv_k^2 := \Var g_k(\cT_k)$, we have $\ga_k^2=\au_k^2\bv_k^2$ 
so $\ga_k=\au_k\bv_k$. Thus, for $k>3$,
\begin{equation}
  \label{bl2}
\Var F(\cT_k) =(1+\au_k)^2\Var g_k(\cT_k) = 
(1+\au_k)^2\bv_k^2=
(\ga_k+\bv_k)^2
\end{equation}
and
\begin{equation}\label{bl}
  \begin{split}
  \E\bigpar{f_k(\cT_k)\bigpar{2F(\cT_k)-f_k(\cT_k)}}
&=
  \E\bigpar{\au_k g_k(\cT_k)(2+\au_k)g_k(\cT_k)}
\\&
=\au_k(2+\au_k) \bv_k^2
=\ga_k(2\bv_k+\ga_k).	
  \end{split}
\end{equation}

For $k=3$, $F(\cT_3)=f_3(\cT_3)$, and \eqref{bl2}--\eqref{bl} hold if
we redefine $\bv_3:=0$.

By \eqref{gk}, \eqref{emma1} (with $f_n$ temporarily redefined as 0)
and \eqref{bl},
\begin{equation}\label{per}
  \begin{split}
  \bv_n^2 &
=\Var g_n(\cT_n) 
=n\sum_{k=3}^{n-1} 
\frac{\PP\xpar{S_{n-k}=n-k}}{\PP(S_n=n-1)}\pi_k 
\E\bigpar{f_k(\ctk) \bigpar{2F(\ctk)-f_k(\ctk)}}
\\
&=n\sum_{k=3}^{n-1} 
\frac{\PP\xpar{S_{n-k}=n-k}}{\PP(S_n=n-1)}\pi_k 
\ga_k(2\bv_k+\ga_k).
  \end{split}
\raisetag{\baselineskip}
\end{equation}
In particular, for $n>3$,
using \eqref{llt} (or \refL{L2}),
\begin{equation}\label{magnus}
  \bv_n^2 
\ge n
\frac{\PP\xpar{S_{n-3}=n-3}}{\PP(S_n=n-1)}\pi_3
\ga_3^2
\ge \cc n.
\end{equation}
If $n-k\ge1$, \eqref{llt} implies 
$\PP(S_{n-k}=n-k) \ge \cc (n-k)\qqw \ge \ccx n\qqw$
and thus, using also \eqref{snn},
$\PP(S_{n-k}=n-k)/\PP(S_{n-1}=n) \ge \cc$.
Using this, \eqref{magnus} and \eqref{pett} in 
\eqref{per} yields,
noting \eqref{bl2},
\begin{equation}
  \begin{split}
\Var F(\ctn)\ge
  \bv_n^2 &
\ge \cc
n\sum_{k=4}^{n-1} 
k^{-3/2}
\ga_k k\qq.
  \end{split}
\end{equation}
It follows that if 
$\sum_{k=3}^\infty \xfrac{\ga_k}{k}=\infty$,
then $\Var F(\ctn)/n\to\infty$.

Consequently, the condition \eqref{t1v2} is in general necessary for 
$\Var F(\ctn) =O(n)$, even if we assume $\E f(\ctn)=0$.
In particular, taking $\ga_k=1/\log k$, $k\ge3$, we find an example where
$\E f(\ctk)^2\to0$ as $k\to0$ but $\Var F(\ctn)/n\to\infty$.
\end{example}

\begin{example}
  \label{ESW} 
We modify example \refE{EJW} to have $f_k(T)$ uniformly bounded by defining
recursively, instead of \eqref{fs}, 
\begin{equation}
  \label{fs+}
f_k(T)=\au_k \bigpar{\sign(g_k(T))-\E\sign(g_k(\ctk))}, \qquad T\in\st_k,
\end{equation}
for a given bounded sequence $(\au_k)_3^\infty$ of positive numbers.
We still have $\E f_k(\ctk)=\E g_k(\ctk)=0$. Furthermore,
\begin{equation}
  \E \bigpar{f_k(\ctk)g_k(\ctk)}
= \au_k\E |g_k(\ctk)|.
\end{equation}
Since $F(\ctn)=f_n(\ctn)+g_n(\ctn)$,
it follows, similarly to \eqref{per}, that
\begin{equation}\label{axel}
  \begin{split}
\Var F(\ctn)
&\ge \Var g_n(\ctn) 
\\&
=n\sum_{k=3}^{n-1} 
\frac{\PP\xpar{S_{n-k}=n-k}}{\PP(S_n=n-1)}\pi_k 
\E\bigpar{f_k(\ctk) \bigpar{2F(\ctk)-f_k(\ctk)}}
\\
&\ge \cc n\sum_{k=3}^{n-1} 
\pi_k \au_k\E |g_k(\ctk)|
\ge \cc n\sum_{k=3}^{n-1} 
 \au_k\E |g_k(\ctk)|/k^{3/2}.
  \end{split}
\raisetag{\baselineskip}
\end{equation}
In particular $\Var g_n(\ctn)\ge \cc n$. It seems likely that also
\begin{equation}\label{aa}
\E|g_n(\ctn)|\ge \cc n\qq;   
\end{equation}
if this is the case with, for example, $\au_k=1/\log k$,
then \eqref{axel} shows that $\Var F(\ctn)/n\to\infty$ although $f$ is bounded,
\eqref{t1v1} holds
and $\E f(\ctn)=0$ for all $n$.

Unfortunately, we have not been able to show \eqref{aa}, but we note that 
if \eqref{t1n} holds (with $\mu=0$ as in our case), then 
$\liminf_\ntoo \E|F(\ctn)|/\sqrt n \ge \sqrt{2/\pi}\gam$ by Fatou's lemma, 
and since
$F(\ctn)-g_n(\ctn)=f_n(\ctn)=O(1)$, it follows that \eqref{aa} holds.
Hence we can at least conclude that, for $\au_k=1/\log k$, 
\eqref{t1v} and \eqref{t1n} cannot both hold (for any finite $\gam^2$).
\end{example}

\section{Asymptotic normality}\label{San}

In this section we consider only functionals $f$ with finite support.
Recall that this implies that $f$ is bounded.

\begin{lemma}\label{Lan}
  Suppose that $f$ has finite support.
Then, with notations as in \refT{T1},
\begin{equation}\label{lan}
\frac{ F(\ctn) - n\mu}{\sqrt n} \dto N(0,\gam^2).
\end{equation}
\end{lemma}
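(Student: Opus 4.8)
The plan is to transfer to the i.i.d.\ picture via \eqref{Fd} and \eqref{Fc} and then run a characteristic function computation. Since $f$ has finite support, say $f(T)=0$ whenever $|T|>K$, the cyclic formula \eqref{Fc} reduces, for $n\ge K$, to $F(T)=\sum_{i=1}^{n}\psi(d_i,\dots,d_{i+K-1\bmod n})$, where $\psi(a_1,\dots,a_K):=\sum_{k=1}^{K}f(a_1,\dots,a_k)$ (using the extension \eqref{fdd}) is a \emph{bounded} function of $K$ coordinates, and $\E\psi(\xi_1,\dots,\xi_K)=\sum_{k=1}^{K}\E f_k(\cT)=\mu$ by \eqref{efkkt} and \eqref{Fsum}. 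Hence by \eqref{Fd}, $F(\ctn)\eqd\bigpar{W_n\mid S_n=n-1}$ with $W_n:=\sum_{i=1}^{n}\psi(\xi_i,\dots,\xi_{i+K-1\bmod n})$, a bounded, cyclically stationary, $(K-1)$-dependent sum built from $\xi_1,\dots,\xi_n$. If $\gam^2=0$ the lemma is immediate: \refC{Cfinite} gives $\Var F(\ctn)=o(n)$ and \refT{T1}\ref{T1e} gives $\E F(\ctn)=n\mu+o(\sqrt n)$ (note $\E f(\ctn)=0$ for $n>K$), so $(F(\ctn)-n\mu)/\sqrt n\pto0$. From now on assume $\gam^2>0$.

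First I would record the bivariate central limit theorem $\tfrac1{\sqrt n}(W_n-n\mu,\,S_n-n)\dto N(0,\Sigma)$ with $\Sigma=\smatrixx{\sigma_{WW}&\sigma_{WS}\\\sigma_{WS}&\gss}$, $\sigma_{WW}=\lim\Var W_n/n$ and $\sigma_{WS}=\lim\Cov(W_n,S_n)/n$; this follows from the Cram\'er--Wold device together with the classical blocking CLT for $m$-dependent stationary sequences (any linear combination $\gl W_n+\gk S_n$ is again such a sum, and finite variance of $\xi$ suffices). Moreover $\Sigma$ is positive definite, since $\det\Sigma=\gss(\sigma_{WW}-\sigma_{WS}^2/\gss)$ and $\sigma_{WW}-\sigma_{WS}^2/\gss=\gam^2>0$. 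Then, by conditional Fourier inversion (recalling that $S_n$ is integer valued) followed by the substitution $v=w/\sqrt n$,
\begin{equation*}
\PP(S_n=n-1)\,\E\bigsqpar{e^{\ii t(F(\ctn)-n\mu)/\sqrt n}}
=\frac1{2\pi\sqrt n}\int_{-\pi\sqrt n}^{\pi\sqrt n}e^{\ii w/\sqrt n}\,
\E\bigsqpar{\exp\bigpar{\tfrac{\ii}{\sqrt n}\bigpar{t(W_n-n\mu)+w(S_n-n)}}}\dd w .
\end{equation*}
I would split this integral at $|w|=\eps\sqrt n$.

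On the bulk $|w|\le\eps\sqrt n$ the integrand converges pointwise (for fixed $t$) to $\exp\bigpar{-\tfrac12(t,w)\Sigma(t,w)^{\mathrm T}}$ by the bivariate CLT, and one needs in addition a dominating bound $\ll e^{-cw^2}$, integrable and uniform in $n$ — this is obtained from the $(K-1)$-dependence by a block decomposition of the sum $t(W_n-n\mu)+w(S_n-n)$, the estimate \eqref{ccgf} applied to the $\xi$-part of each interior block, and the positive-definiteness of $\Sigma$. For the tail $\eps\sqrt n<|w|\le\pi\sqrt n$, i.e.\ $\eps<|v|\le\pi$, I claim $\bigabs{\E\bigsqpar{e^{\ii t(W_n-n\mu)/\sqrt n}e^{\ii vS_n}}}\ll e^{-cn}$: choose indices $i_1<\dots<i_r$ with all cyclic gaps $\ge K$ (so $r\asymp n$) and condition on $(\xi_j)_{j\notin\{i_1,\dots,i_r\}}$; then no $\psi$-window contains two of the $i_a$, so conditionally $W_n=C+\sum_a g_a(\xi_{i_a})$ with each $|g_a|\le K\norm{\psi}_\infty$ and $S_n=C'+\sum_a\xi_{i_a}$, whence the conditional expectation factors as $\prod_a\E\bigsqpar{e^{\ii(t/\sqrt n)g_a(\xi)+\ii v\xi}}$, each factor of modulus $\le|\gf(v)|+|t|K\norm{\psi}_\infty/\sqrt n\le1-c$ for $\eps\le|v|\le\pi$ and $n$ large (by compactness, as the span is $1$); since $\PP(S_n=n-1)\asymp n\qqw$ by \eqref{snn}, this part of the integral is negligible. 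Granting the domination, dominated convergence yields
\begin{equation*}
\sqrt n\,\PP(S_n=n-1)\,\E\bigsqpar{e^{\ii t(F(\ctn)-n\mu)/\sqrt n}}\to
\frac1{2\pi}\int_{\oooo}\exp\bigpar{-\tfrac12(t,w)\Sigma(t,w)^{\mathrm T}}\dd w
=\frac{e^{-\gam^2 t^2/2}}{\sqrt{2\pi\gss}},
\end{equation*}
the last step a Gaussian integration in $w$ (using $\sigma_{WW}-\sigma_{WS}^2/\gss=\gam^2$); since $\sqrt n\,\PP(S_n=n-1)\to1/\sqrt{2\pi\gss}$ by \eqref{snn}, this gives $\E\bigsqpar{e^{\ii t(F(\ctn)-n\mu)/\sqrt n}}\to e^{-\gam^2 t^2/2}$, which is \eqref{lan}. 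That the limiting variance $\sigma_{WW}-\sigma_{WS}^2/\gss$ equals the quantity $\gam^2$ of \eqref{gam} is then identified via \refC{Cfinite} (equivalently, by applying the same Fourier machinery to the second moment).

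The step I expect to be the main obstacle is precisely the uniform control of the joint characteristic function of $(W_n,S_n)$ needed to pass to the limit inside the Fourier integral: combining the $m$-dependent bivariate CLT with the dominating bound over the growing range $|w|\le\eps\sqrt n$ — where having only a second moment of $\xi$ forces some care, \cf{} the cited analyses of conditioned sums in \citet{Zabell,Swensen} and \citet{SJ132} — together with the exponential decay over $\eps<|v|\le\pi$. Everything else — the reduction to $W_n$, the Gaussian integration, and the identification of the variance — is routine, and the usual minor modifications (restricting the $v$-integral to $|v|<\pi/h$) handle span $h>1$, as elsewhere in the paper.
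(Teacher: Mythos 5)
Your route is genuinely different from the paper's. You attack the conditional characteristic function head-on via Fourier inversion in the conditioning variable, writing $\PP(S_n=n-1)\E e^{\ii t(F(\ctn)-n\mu)/\sqrt n}$ as an integral of the joint characteristic function of the $(K-1)$-dependent pair $(W_n,S_n)$ and evaluating the limit by a Gaussian integration in $w$ — the classical local-limit-theorem approach to conditioned sums. The paper instead uses the Le Cam--Holst device: it splits the index set at $n'\approx\ga n$, decorrelates by passing to $\tY_n=Y_n+\mu\gs^{-2}(S_n-n)$, shows that conditioning on $S_n=n-1$ does not disturb the limit of $\tY_{n'}/\sqrt n$ (this needs only the \emph{univariate} local limit theorem \eqref{snn} for the i.i.d.\ tail $S_n-S_{n'}$, applied to a fixed bounded test function), and then lets $\ga\to1$. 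The whole point of that construction is to avoid exactly the step you single out as your main obstacle: no uniform control of the joint characteristic function of an $m$-dependent sum over a growing range of frequencies is ever required. Your tail estimate over $\eps<|v|\le\pi$ (conditioning on the $\xi_j$ off a $K$-separated index set so that the expectation factors) is correct and standard.

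There is, however, a genuine gap at the bulk domination, and your sketched mechanism would fail as stated. You propose to get the bound $\ll e^{-cw^2}$ for $|w|\le\eps\sqrt n$ by applying \eqref{ccgf} ``to the $\xi$-part of each interior block''; but one cannot bound $|\E e^{\ii(A+B)}|$ by $|\E e^{\ii B}|$, so the $W_n$-part cannot simply be discarded blockwise. The natural repair is the same conditioning/factorization you use for the tail, but then each factor must be estimated to \emph{second} order: the first-order bound $|\E e^{\ii(tg_a(\xi)/\sqrt n+v\xi)}|\le|\gf(v)|+C|t|n\qqw$ accumulates over the $\asymp n$ factors to a multiplicative error $e^{O(|t|\sqrt n)}$, which destroys the bound. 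One needs instead something like $|\E e^{\ii Y}|^2=1-\E\bigpar{1-\cos(Y-Y')}\le 1-c\,w^2/n+Ct^2/n$ (truncating $\xi$ at a large constant so that $|Y-Y'|$ stays below $\pi$ on the retained event, and using the non-degeneracy of $\xi$ for the $w$-term), so that the product over $\asymp n$ factors is $\le e^{Ct^2}e^{-cw^2}$ — an admissible dominating function for fixed $t$. This is fillable, but it is the heart of your proof and is not actually carried out; note also that the positive-definiteness of $\Sigma$ plays no role here (the decay in $w$ comes solely from $\Var\xi>0$, and the final Gaussian integral in $w$ converges whenever $\gss>0$, even if $\gam^2=0$). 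Finally, the identification of the limiting variance $\gs_{WW}-\gs_{WS}^2/\gss$ with $\gam^2$ should be done by computing $\gs_{WW}$ and $\gs_{WS}$ directly (as in \eqref{gbx}--\eqref{rhox} of the paper, where $\gs_{WS}=-\mu$ because $f_k\xik\ne0$ forces $S_k=k-1$) rather than by appeal to \refC{Cfinite} alone, since matching a distributional limit with a variance limit would additionally require uniform integrability of $(F(\ctn)-n\mu)^2/n$.
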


\begin{proof}
  We use the representation \eqref{Fd}. Since $f$ has finite support, there
  exists $m$ such that $f_k=0$ for $k>m$; this means that it suffices to sum
  over $k\le m$ in \eqref{Fd}.
We define
\begin{equation}\label{g8}
  g(x_1,\dots,x_m):=\sum_{k=1}^m f(x_1,\dots,x_k)
=\sum_{k=1}^m f_k(x_1,\dots,x_k).
\end{equation}
Then \eqref{Fd} can be written (assuming $n\ge m$)
\begin{equation}\label{Fg}
  F(\ctn)
\eqd \lrpar{\sum_{i=1}^{n} g(\xi_i,\dots,\xi_{i+m-1\bmod n})
\Bigm| S_n=n-1}.
\end{equation}
We now use a method by \citet{LeCam} and \citet{Holst81},
see also \citet{Kudlaev}. (See in particular
\cite[Theorem 5.1]{Holst81}; the conditions are somewhat different but we use
essentially the same proof.)

Note first that by \eqref{g8} and \eqref{efkkt},
\begin{equation}\label{eg}
\E g(\xi_1,\dots,\xi_m)=\sum_{k=1}^m \E f_k(\cT)=\E f(\cT)=\mu.  
\end{equation}
Furthermore, $g$ is bounded, because $f$ is.

Fix $\ga$ with $0<\ga<1$ and a sequence $n'=n'(n)$ with $n'/n\to \ga$, for
example $n'=\floor{\ga n}$. Define the centred sum
\begin{equation}\label{yn}
  Y_n := \sumin \bigpar{g(\xi_i,\dots,\xi_{i+m-1})-\mu}.
\end{equation}
Then, by the standard central limit for $m$-dependent variables
\cite{HoeffdingR}, \cite{Diananda}, applied to the
random vectors $\bigpar{g(\xi_i,\dots,\xi_{i+m-1})-\mu,\xi_i}$,
\begin{equation}\label{ylim}
  \lrpar{\frac{Y_{n'}}{\sqrt{n}},\frac{S_{n'}-n'}{\sqrt n}}
\dto N\lrpar{0,\ga
  \begin{pmatrix}
\gb^2 & \rho \\ \rho & \gss	
  \end{pmatrix}
}
\end{equation}
where 
\begin{align*}
  \gb^2 &= 
\Var\bigpar{g(\xi_1,\dots,\xi_{m})}
+ 2\sum_{i=2}^{m}
\Cov\bigpar{g(\xi_1,\dots,\xi_{m}),g(\xi_i,\dots,\xi_{i+m-1})},
\\
\rho &= \sumim \Cov\bigpar{g(\xi_1,\dots,\xi_{m}),\xi_i}
=\Cov\bigpar{g(\xi_1,\dots,\xi_{m}),S_m}.
\end{align*}
We calculate $\gb^2$ by expanding $g$ using \eqref{g8} and arguing as 
in \eqref{vfn}---\eqref{vfnx} in the proof of \refL{Lcov} (where
we condition on $S_{n}=n-1$, making the present calculation simpler).
This yields, omitting the details, \cf{} also the proof of \refC{Cfinite},
and using \eqref{gam},
\begin{equation}\label{gbx}
  \begin{split}
\gb^2 &= \sum_{\ell\le k}^m (2-\gd_{k,\ell})\E\bigpar{f_k(\cT)F_\ell(\cT)}	
-\sum_{k,\ell=1}^m (k+\ell-1)\E f_k(\cT)\E f_\ell(\cT)
\\
&= \E\bigpar{f(\cT)(2F(\cT)-f(\cT))}- 2 \E\bigpar{|\cT|f(\cT)} \mu + \mu^2
\\
&= \gam^2 + \mu^2/\gss.
  \end{split}
\raisetag{\baselineskip}
\end{equation}
Furthermore, since $f_k(\xi_1,\dots,\xi_k)\neq0$ only when
$(\xi_1,\dots,\xi_k)$ is the degree sequence of a tree, and thus $S_k=k-1$,
while $\E S_k=k$, and using \eqref{efkkt} again,
\begin{equation}\label{rhox}
  \begin{split}
\rho &= \sum_{k=1}^m \Cov\bigpar{f_k(\xi_1,\dots,\xi_k),S_m}	
 = \sum_{k=1}^m \Cov\bigpar{f_k(\xi_1,\dots,\xi_k),S_k}	
\\&
 = \sum_{k=1}^m \E\bigpar{f_k(\xi_1,\dots,\xi_k)(S_k-k)}	
 = \sum_{k=1}^m \E\bigpar{-f_k(\xi_1,\dots,\xi_k)}	
\\&
=-\sum_{k=1}^m \E f_k(\cT)
=-\mu.
  \end{split}
\end{equation}
We define for convenience 
\begin{equation}\label{tY}
\tY_n := Y_n +\frac{\mu}{\gss}(S_n-n).  
\end{equation}
Then \eqref{ylim} yields, using \eqref{gbx}--\eqref{rhox},
\begin{equation}\label{ylim2}
  \lrpar{\frac{\tY_{n'}}{\sqrt{n}},\frac{S_{n'}-n'}{\sqrt n}}
\dto N\lrpar{0,\ga
  \begin{pmatrix}
\gb^2-\mu^2/\gss & 0 \\ 0 & \gss	
  \end{pmatrix}
}
=
 N\lrpar{0,\ga
  \begin{pmatrix}
\gam^2 & 0 \\ 0 & \gss	
  \end{pmatrix}
}.
\end{equation}
In other words, $\tY_{n'}/\sqrt{n}$ and $(S_{n'}-n')/\sqrt{n}$ are
jointly asymptotically normal with independent limits $W\sim N(0,\ga\gam^2)$
and $Z\sim N(0,\ga\gss)$.

Next, let $h$ be any bounded continuous function on $\bbR$. Then,
using \eqref{llt}--\eqref{snn} and \eqref{ylim2},
\begin{equation*}
  \begin{split}
&  \E \bigpar{h\bigpar{\tY_{n'}/\sqrt n}\mid S_n=n-1}
\\&=
\frac{\E\sum_j h\bigpar{\tY_{n'}/\sqrt n}\ett{S_{n'}=j}\ett{S_n-S_{n'}=n-1-j}}
{\PP(S_n=n-1)}	
\\&=
\frac{\sum_j \E\bigpar{h\bigpar{\tY_{n'}/\sqrt n}\ett{S_{n'}=j}}
\PP\bigpar{S_{n-n'}=n-1-j}}
{\PP(S_n=n-1)}	
\\&=
\sum_j \E\bigpar{h\bigpar{\tY_{n'}/\sqrt n}\ett{S_{n'}=j}}
\Bigparfrac{n}{n-n'}\qq \bigpar{e^{-(j+1-n')^2/(2(n-n')\gss)}+o(1)}
\\&=
(1-\ga)\qqw \E\Bigpar{h\bigpar{\tY_{n'}/\sqrt n}
e^{-(S_{n'}+1-n')^2/(2(n-n')\gss)}}
+o(1)
\\& \to
(1-\ga)\qqw\E\Bigpar{h(W)e^{-Z^2/(2(1-\ga)\gss)}}
\\&
=
\E\bigpar{h(W)}
(1-\ga)\qqw\E e^{-Z^2/(2(1-\ga)\gss)}
= \E h(W),
  \end{split}
\end{equation*}
where the final equality follows by a simple calculation, or even more
simply by using the special case $h\equiv1$.
Since $h$ is arbitrary, this proves
\begin{equation}\label{ylim3}
  \Bigpar{\tY_{n'}/\sqrt n\mid S_n=n-1} \dto W \sim N\bigpar{0,\ga\gam^2}.
\end{equation}

Next, conditioned on $S_n=n-1$ we have by 
\eqref{yn}, \eqref{tY} and symmetry (for $n$ so large that $n',n-n'>m$)
\begin{equation}\label{quf}
  \begin{split}
&
\sum_{i=1}^{n} \bigpar{g(\xi_i,\dots,\xi_{i+m-1\bmod n})-\mu}
-\tY_{n'}   
\\&\qquad
\eqd
\sum_{i=1}^{n-n'} \bigpar{g(\xi_i,\dots,\xi_{i+m-1})-\mu}
+\frac\mu{\gss}\bigpar{S_{n-n'}-(n-1-n')}
\\&\qquad
=\tY_{n-n'}+\mu/\gss.	
  \end{split}
\raisetag{\baselineskip}
\end{equation}
We may for notational
convenience pretend that the equality in distribution \eqref{Fd}
is an equality, and we then have, for each $\ga\in(0,1)$, a decomposition 
\begin{equation}\label{decomp}
  \frac{F(\ctn)-n\mu}{\sqrt n} = X'_{n,\ga}+X''_{n,\ga}
\end{equation}
where $X'_{n,\ga} = \bigpar{\tY_{n'}/\sqrt n\mid S_n=n-1}$ and, 
by \eqref{quf},
\begin{equation}
X''_{n,\ga} 
\eqd \biggpar{\frac{\tY_{n-n'}}{\sqrt n}\mid S_n=n-1}+\frac{\mu}{\gss\sqrt n}  
=  \biggpar{\frac{\tY_{n-n'}}{\sqrt n}\mid S_n=n-1}+o(1).
\end{equation}
By \eqref{ylim3}, $X'_{n,\ga}\dto W'_\ga\sim N\bigpar{0,\ga\gam^2}$.
Furthermore, $(n-n')/n\to1-\ga$, and thus by \eqref{ylim3} applied to
$1-\ga$ instead of $\ga$,
$X''_{n,\ga}\dto W''_\ga\sim N\bigpar{0,(1-\ga)\gam^2}$.

Now let $\ga\to1$ (along a sequence, if you like). 
Then $W'_\ga\dto N(0,\gam^2)$ and $W''_\ga\pto 0$, and 
the conclusion \eqref{lan} follows from
\eqref{decomp},
see \eg{} \cite[Theorem 4.2]{Billingsley} 
or \cite[Theorem 4.28]{Kallenberg}.
\end{proof}

\section{Final proofs}\label{Spf}

\begin{proof}[Proof of \refT{T1}]
We have already proved part \ref{T1e} in \refS{Sexp}.

Futhermore, we have proved part \ref{T1v} in the special case of a
functional $f$ with finite support in \refC{Cfinite} and \refL{Lan}.
In general, we use a truncation. 

We begin by verifying that $\gam^2$ is finite, with
the expectations in \eqref{gam} absolutely convergent.

First, $\E|f(\cT)|<\infty$ 
by assumption, see also  \refR{RT1a}.
Similarly, by \eqref{pett}, since $\E f(\ctn)^2=O(1)$ by \eqref{t1v1},
\begin{equation}\label{chr}
  \E f(\cT)^2 = \sumni \pi_n \E f(\ctn)^2
\ll \sumni \frac{\E f(\ctn)^2 }{n^{3/2}}
<\infty.
\end{equation}
Hence $\Var f(\cT)<\infty$.

To show that
$\E\bigabs{f(\cT)\bigpar{F(\cT)-|\cT|\mu}}<\infty$, 
note that by \refT{Tvar<} and \eqref{t1e},
\begin{equation}\label{oslo}
  \begin{split}
\E\bigpar{F(\ctn)-n\mu}^2	
= \Var F(\ctn) + \bigpar{\E F(\ctn)-n\mu}^2	
=O(n).
  \end{split}
\end{equation}
Thus, using the \CSineq, \eqref{pett}, \eqref{oslo} and  \eqref{t1v2},
\begin{equation}\label{hw}
  \begin{split}
  \E\bigabs{f(\cT)\bigpar{F(\cT)-|\cT|\mu}}	
&=
\sumni \pi_n \E\bigabs{f(\ctn)\bigpar{F(\ctn)-n\mu}}
\\&
\le
\sumni \pi_n \sqrt{\E {f(\ctn)^2}}\sqrt{\E \bigpar{F(\ctn)-n\mu}^2}
\\&
\ll
\sumni n\qqcw \sqrt{\E {f(\ctn)^2}} \,n\qq
<\infty.
  \end{split}
\end{equation}
Hence, $\gam^2$ is well-defined by \eqref{gam}, and finite.

Define the truncation 
\begin{equation}
f\NN(T):=\sum_{k=1}^N f_k(T) = f(T)\ett{|T|\le N}  
\end{equation}
and the corresponding sum $F\NN(T)$.
Furthermore, let $\mu\NN:=\E f\NN(\cT)$ and  
\begin{equation}\label{gamN}
(\gam\NN)^2 := 2\E\Bigpar{f\NN(\cT)\bigpar{F\NN(\cT)-|\cT|\mu\NN}}
-\Var f\NN(\cT) -(\mu\NN)^2/\gss.
\end{equation}
Then $\mu\NN\to\mu$ as \Ntoo{} by dominated convergence, and similarly,
using \eqref{chr},
$\Var f\NN(\cT)\to \Var f(\cT)$ and, using \eqref{hw},
\begin{equation*}
 \E\bigabs{f\NN(\cT)\bigpar{F\NN(\cT)-|\cT|\mu}}	
\to
  \E\bigabs{f(\cT)\bigpar{F(\cT)-|\cT|\mu}}	.
\end{equation*}
Finally, using \eqref{t1v1} and \eqref{pett},
\begin{equation*}
  \begin{split}
&  \E\bigabs{f\NN(\cT)|\cT|(\mu-\mu\NN)}
=
  \E\bigabs{f\NN(\cT)|\cT|}\cdot\bigabs{\mu-\mu\NN}
\\
&\quad\le
\sum_{k=1}^N \pi_k k \E|f(\ctk)| \cdot
\sum_{k=N+1}^\infty \pi_k |\E f(\ctk)|
=O\bigpar{N\qq}\cdot o\bigpar{N\qqw}
=o(1),
  \end{split}
\end{equation*}
as \Ntoo.
Combining these estimates we see that $(\gam\NN)^2\to\gam^2$.

Since $f\NN$ has finite support, \refC{Cfinite} yields
$\Var F\NN(\ctn)/n\to \gamN^2$ as $\ntoo$, for every fixed $N$.
Furthermore,
\refT{Tvar<} applied to $f-f\NN=\sum_{k=N+1}^\infty f_k$ shows that
\begin{equation}\label{rest}
n\qqw  \Var\bigpar{F(\ctn)-F\NN(\ctn)}\qq 
\ll \sup_{k>N}\sqrt{\E f(\ctk)^2}
+\sum_{k=N+1}^\infty \frac{\sqrt{\E f(\ctk)^2}}{k},
\end{equation}
uniformly in $n$ and $N$.
The \rhs{} is independent of $n$ and tends to 0 as \Ntoo, and it follows by 
Minkowski's inequality and a standard $3\eps$-argument
(\ie, because a uniform limit of convergent sequences is convergent)
that $n\qqw \Var (F(\ctn))\qq\to \lim_{N\to\infty}\gamNx=\gam$, showing
\eqref{t1v}.

Similarly, \refL{Lan} applies to each $f\NN$, and 
the uniform estimate \eqref{rest} implies that
we can let \Ntoo{} and conclude \eqref{t1n}, 
see \eg{} \cite[Theorem  4.2]{Billingsley} or \cite[Theorem 4.28]{Kallenberg}
again.   
\end{proof}

\begin{proof}[Proof of \refT{Tlocal}] 
Suppose first that $f$ is bounded and local. By replacing $f(T)$ by 
$f(T)-\E f(\hct)$, which does not change $F(T)-|T|\mu$, we may assume that
$\E f(\hct)=0$. In this case \refL{Lfloc} yields $\E f(\ctn)=O\bigpar{n\qqw}$
and in particular $\E f(\ctn)\to0$ and $\sum_n|\E f(\ctn)|/n<\infty$.
Hence the conditions of the second part are satisfied, so it suffices to
prove it.

Hence, assume now that $f$ is bounded and weakly local, 
and that $\E f(\ctn)\to0$ and $\sum_n|\E f(\ctn)|/n<\infty$.
We use truncation as in the proof of \refT{T1} and note first that
\refT{Tvarlocal} applied to $f-f\NN$ yields
\begin{equation}\label{hede}
  n\qqw\Var\bigpar{F(\ctn)-F\NN(\ctn)}\qq
\ll N^{-1/8}\sup _{k> N} A_k + \sup_{k>N}|\mu_k| + \sum_{k>N}|\mu_k|/k
\end{equation}
 where the \rhs{} is independent of $n$ and tends to 0 as $\Ntoo$.
(Note that $f-f\NN$ is weakly local with the same cut-off as $f$ for all
 $N$.)

Similarly, if $M\ge N$, then \refC{Cfinite} and \refT{Tvarlocal},
together with Minkowski's inequality,  show, 
with $\gamNx$ given by \eqref{gamN},
\begin{equation}
  \begin{split}
\bigabs{\gam\MM-\gam\NN} 
&\le \limsup_\ntoo n\qqw\Var\bigpar{F\MM(\ctn)-F\NN(\ctn)}\qq
\\&
\ll N^{-1/8}\sup _{k> N} A_k + \sup_{k>N}|\mu_k| + \sum_{k>N}|\mu_k|/k.	
  \end{split}
\end{equation}
Consequently $\bigpar{\gam\NN}_N$ is a Cauchy sequence so
$\gamNx \to\gam$
for some $\gam<\infty$.

The rest of the proof is the same as for \refT{T1}.
\end{proof}

\begin{proof}[Proof of \refC{C1}]
For any finite set $T_1,\dots,T_m$ of distinct trees
and real numbers $a_1,\dots,a_m$,
apply \refT{T1} to $f(T):=\sumim a_i\ett{T=T_i}$ and note that then 
$F(T)=\sumim a_i n_{T_i}(T)$. The assumptions \eqref{t1v1}--\eqref{t1v2}
hold trivially since $f$ has finite support.
We have $\mu = \E f(\cT)=\sumin a_i\PP(\cT=T_i)=\sumim a_i\pi_{T_i}$ and
a simple calculation shows that \eqref{gam} yields
$\gam^2 = \sum_{i,j=1}^m a_ia_j\gam_{T_i,T_j}$.
The results now follow from \refT{T1} 
(or directly from \refC{Cfinite} and \refL{Lan}),
using the Cram\'er--Wold device.
\end{proof}

\begin{proof}[Proof of \refT{Tgam}]
We use the notation in the proof of \refL{Lan}
(but now simply taking $n'=n$ so $\ga=1$).
We showed in \eqref{ylim} and \eqref{ylim2} asymptotic normality of $Y_n$,
$S_n$ and $\tY_n$; a simple (and well-known) calculation shows that also the
(co)variances converge:
$\Var(Y_n)/n\to\gb^2$, $\Cov(Y_n,S_n)/n\to\rho$, $\Var(S_n)/n\to\gss$ and,
recalling \eqref{gbx}--\eqref{rhox},
\begin{equation}\label{varty0}
  \frac{\Var(\tY_n)}{n} \to \gb^2 - \frac{\mu^2}{\gss} = \gam^2 =0.
\end{equation}

However, by \eqref{tY} and \eqref{yn},
\begin{equation} \label{tyg}
  \tY_n := \sumin \tg(\xi_i,\dots,\xi_{i+m-1}),
\end{equation}
where
\begin{equation}\label{tg}
\tg(\xi_i,\dots,\xi_{i+m-1})
:=g(\xi_i,\dots,\xi_{i+m-1})-\mu + \frac{\mu}{\gss}(\xi_i-1).
\end{equation}
The sequence $X_i:=\tg(\xi_i,\dots,\xi_{i+m-1})$ is strictly stationary and
$(m-1)$-dependent, with mean $\E X_i=0$ (by \eqref{eg}) and finite variance.
If the partial sums $\tY_n$ satisfy \eqref{varty0}, with limit $\gam^2=0$,
then as a consequence of a theorem by \citet{Leonov}, in the version given
by \citet[Theorem 8.6]{Bradley},
see \citet[Theorem 1.6]{SJmdep} for details,
there exists a function $h:\bbN^{m-1}\to\bbR$ such that
\begin{equation}
\tg(\xi_i,\dots,\xi_{i+m-1})=
h(\xi_{i+1},\dots,\xi_{i+m-1}) - h(\xi_i,\dots,\xi_{i+m-2}) \xas
\end{equation}
and thus by \eqref{tyg},
\begin{equation}\label{hh}
  \tY_n =
h(\xi_{n+1},\dots,\xi_{n+m-1}) - h(\xi_1,\dots,\xi_{m-1}) \xas
\end{equation}
In particular, $\tY_n$ depends \as{} only on $\xi_1,\dots,\xi_{m-1}$ and
$\xi_{n+1},\dots,\xi_{n+m-1}$, but not on $n$ or $\xi_{m},\dots,\xi_n$.

Consider now first case (i). Take $j>0$ with $p_j>0$ and consider the case
$\xi_i=j$ for all $i <n+m$. Then no substring of $\xi_1,\dots,\xi_{n+m-1}$
is the \ds{} of a tree. Thus, recalling \eqref{g8},
$g(\xi_i,\dots,\xi_{i+m-1})=0$ for every $i$ so \eqref{tg} and \eqref{tyg}
yield 
$\tg(\xi_i,\dots,\xi_{i+m-1})=\mu\bigpar{(j-1)/\gss-1}$ 
and
\begin{equation}
  \tY_n = n\mu\bigpar{(j-1)/\gss-1}.
\end{equation}
Since this vanishes for any $n$ by \eqref{hh},
$\mu\bigpar{(j-1)/\gss-1}=0$. By assumption, there exist at least two
different such $j$, and thus $\mu=0$.
Hence, \eqref{tg} simplifies to
$ \tg(\xi_i,\dots,\xi_{i+m-1})= g(\xi_i,\dots,\xi_{i+m-1})$,
and thus
\begin{equation} \label{tygg}
  \tY_n = \sumin g(\xi_i,\dots,\xi_{i+m-1}).
\end{equation}

Next consider the case $\xi_i=0$ for all $i<n+m$. Since $(0)$ is the \ds{}
of the tree $\bullet$ of size 1, \eqref{g8} yields
$g(0,\dots,0)=f_1(0)=f(\bullet)$. Hence, \eqref{tygg} yields 
$\tY_n=nf(\bullet)$. Since this vanishes, by \eqref{hh} again,
we must have $f(\bullet)=0$.

Suppose, in order to obtain a contradiction, that $f(\ct)$ does not vanish
a.s. We have, for some $N\ge1$, some
distinct trees $T_1,\dots,T_N$ and some real numbers
$a_1,\dots,a_N$,
\begin{equation}\label{fta}
  f(T)=\sum_{i=1}^N a_in_{T_i}(T),
\end{equation}
where we may assume that $a_i\neq0$ and $\PP(\ct=T_i)>0$ for all $i$
(otherwise we eliminate the offending terms). We may also suppose that
$T_1,\dots,T_N$ are ordered with $|T_1|\le |T_2|\le\dots$; this implies that
$f(T)=0$ for every proper subtree $T$ of $T_1$.
Let $T_1$ have \ds{} $(d_1,\dots,d_\ell)$, and 
consider now the case 
$\xi_{m+j}=d_j$, $j=1,\dots,\ell$, and $\xi_i=0$ for $i\le m$ and
$m+\ell<i<n+m$,
for $n\ge m+\ell$.
Since $f(T)=0$ for all proper subtrees of $T_1$ and $f(0)=0$, the only
non-zero contribution to $\tY_n$ is by \eqref{tygg}, \eqref{g8} and 
\eqref{fta},
\begin{equation}
f(\xi_{m+1},\dots,\xi_{m+\ell}) = f\ddl = f(T_1)=a_1.  
\end{equation}
Hence $\tY_n = a_1\neq0$, which contradicts \eqref{hh}.
This contradiction proves $f(\ct)=0$ a.s., which implies $F(\ct)=0$ and
$F(\ctn)=0$ \as{} for every $n$, completing the proof of (i).

Now consider (ii), with only $p_0$ and $p_r$ non-zero. (Since $\E\xi=1$, we
have $p_r=1-p_0=1/r$.) This is the case of full $r$-ary trees, and 
the random tree $\ctn$ has $(n-1)/r$ nodes of outdegree $r$ and $n-(n-1)/r$
leaves. Thus the choice $f(T)=\ett{T=\bullet}$, when $F(T)=n_\bullet(T)$ 
is the number of
leaves in $T$, yields $\Var F(\ctn)=0$ so $\gam^2=0$, see \refE{Eleaves}.

If $f$ is any functional with finite support such that $\gam^2=0$, we may
replace $f(T)$ by  $f(T)-f(\bullet)\ett{T=\bullet}$ without changing $\Var
F(\ctn)$, so we still have $\gam^2=0$. Hence we may assume $f(\bullet)=0$.
If we now consider  the case $\xi_i=0$, $i< n+m$, then by \eqref{g8},
$g(\xi_i,\dots,\xi_{i+m-1})=f(0)=0$ and thus \eqref{tg} yields
$\tg(\xi_i,\dots,\xi_{i+m-1})=-\mu-\mu/\gss$. Hence, \eqref{tyg} yields
$\tY_n=-n\mu(1+\gs\qww)$, and \eqref{hh} implies that this vanishes, 
and thus $\mu=0$. The rest of the proof is the same as for (i).
\end{proof}

\begin{proof}[Proof of \eqref{erc}] 
  This is a minor variation of arguments in Sections \ref{Sexp}--\ref{Svar},
  using the special simple structure of $f$. We omit some details.

If $T$ has degree sequence $\ddn$, then $n_r(T)=\sumin\ett{d_i=r}$. This
yields, arguing as for \eqref{Fd},
\begin{equation}
  n_r(\ctn)
\eqd \lrpar{\sum_{i=1}^{n} \ett{\xi_i=r}
\Bigm| S_n=n-1},
\end{equation}
jointly for all $r\ge0$.
This yields, arguing as in \eqref{efkn} and \eqref{vfnx},
\begin{equation}
  \label{ss1}
\E n_r(\ctn) 
= n\PP\bigpar{\xi_1=r\mid S_n=n-1}
=np_r\frac{\PP(S_{n-1}=n-1-r)}{\PP(S_{n}=n-1)}
\end{equation}
and, for any integers $r,s\ge0$,
\begin{equation*}
  \begin{split}
\E\bigpar{ n_r(\ctn) n_s(\ctn)}
&
=\gd_{rs}\E n_r(\ctn)
+ n(n-1)\PP\bigpar{\xi_1=r,\xi_2=s\mid S_n=n-1}
\\&
=\gd_{rs}\E n_r(\ctn)
+ n(n-1)p_rp_s\frac{\PP(S_{n-2}=n-1-r-s)}{\PP(S_{n}=n-1)}.
  \end{split}
\end{equation*}
Hence
{\multlinegap=0pt
\begin{multline}\label{scov}
  \Cov\bigpar{n_r(\ctn),n_s(\ctn)}
= 
\gd_{rs}\E n_r(\ctn)
-\frac1n \E n_r(\ctn)\E n_s(\ctn)
+ n(n-1)p_rp_s\times
\\ 
\biggpar{
\frac{\PP(S_{n-2}=n-1-r-s)}{\PP(S_{n}=n-1)}	
-\frac{\PP(S_{n-1}=n-1-r)}{\PP(S_{n}=n-1)}	
\frac{\PP(S_{n-1}=n-1-s)}{\PP(S_{n}=n-1)}	
}.
\end{multline}}%
For the mean, \eqref{ss1} and \eqref{llt} yield
\begin{equation}
  \label{spr}
\E n_r(\ctn)/n\to p_r, 
\end{equation}
\cf{} \eqref{ermu}. 
(The argument is simpler than in \refS{Sexp} since
we consider a fixed $r$.)

For the covariance, we argue as in the proof of \refL{Lv2} and consider
  \begin{multline}\label{solrs}
\PP\xpar{S_{n-2}=n-r-s-1}{\PP(S_n=n-1)} 
\\\shoveright{
-\PP\xpar{S_{n-1}=n-r-1}\PP\xpar{S_{n-1}=n-s-1}}
\\\shoveleft{\qquad
= \intpipi \tgf^{n-2}(t)e^{\ii(r+s-1)t}\dd t\cdot
\intpipi \tgf^{n}(u)e^{\ii u}\dd u
}
\\\shoveright{
-\intpipi \tgf^{n-1}(t)e^{\ii rt}\dd t\cdot 
\intpipi \tgf^{n-1}(u)e^{\ii su}\dd u}
\\
\shoveleft{\qquad
=
\frac{1}{8\pi^2}\intpipix\intpipix \tgf^{n-2}(t)e^{\ii t}
 \tgf^{n-2}(u)e^{\ii u} 
\bigpar{e^{\ii(r-1)t}\tgf(u)-\tgf(t)e^{\ii(r-1)u}}
\times}
\\
\bigpar{e^{\ii(s-1)t}\tgf(u)-\tgf(t)e^{\ii(s-1)u}}
\dd t\dd u.
  \end{multline}
We have
\begin{equation*}
  e^{\ii(r-1)t}\tgf(u)-\tgf(t)e^{\ii(r-1)u}
=\ii(r-1)(t-u) + O\bigpar{t^2+u^2},
\end{equation*}
and by a change of variables as in \eqref{sol00}--\eqref{sol000},
the final double integral in \eqref{solrs} is 
$\sim -(r-1)(s-1)/(2\pi\gs^4 n^2)$.
Hence \eqref{scov} yields, using also \eqref{snn} and \eqref{spr},
\begin{equation}
\frac{  \Cov\bigpar{n_r(\ctn),n_s(\ctn)}}{n}
\to
\gd_{rs}p_r
-p_rp_s
-\frac{(r-1)(s-1)}{\gss} p_rp_s
,
\end{equation}
showing \eqref{er2} and \eqref{erc}.
\end{proof}

\newcommand\AAP{\emph{Adv. Appl. Probab.} }
\newcommand\JAP{\emph{J. Appl. Probab.} }
\newcommand\JAMS{\emph{J. \AMS} }
\newcommand\MAMS{\emph{Memoirs \AMS} }
\newcommand\PAMS{\emph{Proc. \AMS} }
\newcommand\TAMS{\emph{Trans. \AMS} }
\newcommand\AnnMS{\emph{Ann. Math. Statist.} }
\newcommand\AnnPr{\emph{Ann. Probab.} }
\newcommand\CPC{\emph{Combin. Probab. Comput.} }
\newcommand\JMAA{\emph{J. Math. Anal. Appl.} }
\newcommand\RSA{\emph{Random Struct. Alg.} }
\newcommand\ZW{\emph{Z. Wahrsch. Verw. Gebiete} }
\newcommand\DMTCS{\jour{Discr. Math. Theor. Comput. Sci.} }

\newcommand\AMS{Amer. Math. Soc.}
\newcommand\Springer{Springer-Verlag}
\newcommand\Wiley{Wiley}

\newcommand\vol{\textbf}
\newcommand\jour{\emph}
\newcommand\book{\emph}
\newcommand\inbook{\emph}
\def\no#1#2,{\unskip#2, no. #1,} 
\newcommand\toappear{\unskip, to appear}

\newcommand\urlsvante{\url{http://www.math.uu.se/~svante/papers/}}
\newcommand\arxiv[1]{\url{arXiv:#1.}}
\newcommand\arXiv{\arxiv}

\def\nobibitem#1\par{}


\begin{thebibliography}{99}

\bibitem{SJ250}
Louigi Addario-Berry, Luc Devroye \& Svante Janson,
Sub-Gaussian tail bounds for the width and height of conditioned 
Galton-Watson trees.
\emph{Ann.  Probab.} \textbf{41} (2013), no. 2, 1072--1087.


\bibitem[Aldous(1991)]{Aldous-fringe} 
David Aldous,
Asymptotic fringe distributions for general families of random trees.
\emph{Ann. Appl. Probab.} \vol1 (1991), no. 2, 228--266.

\bibitem[Aldous(1991)]{AldousII} 
David Aldous, 
The continuum random tree II: an overview.
\emph{Stochastic Analysis (Durham, 1990)}, 23--70, 
London Math. Soc. Lecture Note Ser. 167, Cambridge Univ. Press, 
Cambridge, 1991. 

\bibitem[Aldous(1993)]{AldousIII} 
David Aldous, 
The continuum random tree III.
\AnnPr \textbf{21} (1993), no. 1, 248--289.

\bibitem[Aldous and Pitman(1998)]{AldousPitman}
David Aldous \& Jim Pitman, 
Tree-valued Markov chains derived from Galton--Watson processes. 
\emph{Ann. Inst. H. Poincar\'e Probab. Statist.} \vol{34} (1998), no. 5,
637--686. 


\bibitem{SJ139}
Alexei B. Aleksandrov,
Svante Janson, 
Vladimir V. Peller \&
Richard Rochberg,
An interesting class of operators with unusual Schatten-von Neumann
behavior. 
\emph{Function Spaces, Interpolation Theory and Related Topics (Lund, 2000)}, 
61--149, de Gruyter, Berlin, 2002. 

\bibitem[Athreya and Ney(1972)]{AthreyaN}
Krishna B. Athreya \& Peter E. Ney,
\book{Branching Processes}.
Springer-Verlag, Berlin, 1972.

\bibitem[Bennies and Kersting(2000)]{BenniesK}
J\"urgen Bennies \& G\"otz Kersting, 
A random walk approach to Galton--Watson trees. 
\emph{J. Theoret. Probab.} \vol{13} (2000), no. 3, 777--803. 

\bibitem[Billingsley(1968)]{Billingsley}
Patrick Billingsley,
\book{Convergence of Probability Measures}.
Wiley, New York, 1968.

\bibitem[Bradley(2007)]{Bradley}
Richard C. Bradley, 
\emph{Introduction to Strong Mixing Conditions. Vol. 1--3}. 
Kendrick Press, Heber City, UT, 2007. 

\bibitem[Cheon and  Shapiro(2008)]{CheonShapiro}
Gi-Sang Cheon \&  Louis W.  Shapiro,
Protected points in ordered trees. 
\emph{Applied Mathematics Letters}
\vol{21} (2008), no. 5, 516--520.

\bibitem[Devroye(1998)]{Devroye}
Luc Devroye,
Branching processes and their applications in the analysis of tree
structures and tree algorithms.
\inbook{Probabilistic Methods for Algorithmic Discrete Mathematics},
eds. M. Habib, C. McDiarmid, J. Ramirez and B. Reed, 
Springer, Berlin, 1998,
pp. 249--314.

\bibitem[Devroye and Janson(2013)]{SJ283} 
Luc Devroye \& Svante Janson,
Protected nodes and fringe subtrees in some random trees.
\texttt{arXiv:1310.0665}.

\bibitem{Diananda}
P. H. Diananda, 
The central limit theorem for $m$-dependent variables.
\emph{Proc. Cambridge Philos. Soc.} \textbf{51} (1955), 92--95.

\bibitem[Drmota(2009)]{Drmota}
Michael Drmota,
\book{Random Trees},
Springer, Vienna, 2009.

\bibitem[Dwass(1969)]{Dwass}
Meyer Dwass, 
The total progeny in a branching process and a related random walk.
\emph{J. Appl. Probab.} \textbf{6} (1969), 682--686.

\bibitem[Fill,  Flajolet and Kapur(2005)]{FillFK} 
James Allen Fill, Philippe Flajolet \&  Nevin Kapur,
Singularity analysis, Hadamard products, and tree recurrences. 
\emph{J. Comput. Appl. Math.} \textbf{174} (2005), no. 2, 271--313.

\bibitem[Fill and Kapur(2004)]{FillKapur-catalan}
James Allen Fill \&  Nevin Kapur,
Limiting distributions for additive functionals on Catalan trees.
\emph{Theoret. Comput. Sci.} \vol{326} (2004), no. 1-3, 69--102. 

\bibitem[Fill and Kapur(2005)]{FillKapur-mary}
James Allen Fill \&  Nevin Kapur,
Transfer theorems and asymptotic distributional results for $m$-ary search
trees. 
\emph{Random Structures Algorithms} \textbf{26} (2005), no. 4, 359--391.

\bibitem[Fill and Kapur(2005b)]{FillKapur-mary2}
James Allen Fill \&  Nevin Kapur,
A repertoire for additive functionals of uniformly distributed $m$-ary
search trees (extended abstract). 
\emph{Discrete Math. Theor. Comput. Sci. Proc.} \textbf{AD}
(2005), 105--114.

\bibitem[Flajolet and Sedgewick(2009)]{FlajoletS} 
Philippe~Flajolet \& Robert~Sedgewick, 
\emph{Analytic Combinatorics}.
Cambridge Univ. Press, Cambridge, UK, 2009.

\bibitem{HLP}
G. H. Hardy, J. E. Littlewood \& G. P\'olya, 
\book{Inequalities}. 
2nd ed., Cambridge, at the University Press, 
1952. 

\bibitem{HoeffdingR}
Wassily Hoeffding \&
Herbert Robbins,
The central limit theorem for dependent random variables.
\emph{Duke Math. J.} \textbf{15} (1948), 773--780.

\bibitem[Holmgren and Janson(2013+)]{HJ}
Cecilia Holmgren \& Svante Janson,
Limit laws for functions of fringe trees for binary search trees and
recursive trees.
In preparation.

\bibitem[Holst(1981)]{Holst81}
Lars Holst, 
Some conditional limit theorems in exponential families. 
\emph{Ann. Probab.} \vol{9} (1981), no. 5, 
818--830.  

\bibitem[Hwang and Neininger(2002)]{HwangN}
Hsien-Kuei Hwang \&
Ralph Neininger, 
Phase change of limit laws in the quicksort recurrence under varying toll
functions.  
\emph{SIAM J. Comput.} \vol{31} (2002), no. 6, 1687--1722.

\bibitem[Janson(2001)]{SJ132}
Svante Janson,
Moment convergence in conditional limit theorems. 
\emph{J. Appl. Probab.} \vol{38} (2001), no. 2, 421--437. 

\bibitem[Janson(2003)]{SJ146}
Svante Janson,
The Wiener index of simply generated random trees.
\RSA \vol{22}\no4 (2003), 337--358. 

\bibitem[Janson(2006)]{SJ167}
Svante Janson,
Random cutting and records in deterministic and random trees.
\emph{Random Structures Algorithms} \textbf{29} (2006), no. 2, 139--179. 

\bibitem[Janson(2012)]{SJ264}
Svante Janson,
Simply generated trees, conditioned Galton-Watson trees,
 random allocations and condensation.
\emph{Probability Surveys} \vol9 (2012), 103--252.

\bibitem[Janson(2013)]{SJmdep}
Svante Janson,
On degenerate sums of $m$-dependent variables.
Preprint, 2013.

\bibitem[Kallenberg(2002)]{Kallenberg}
Olav Kallenberg,
\book{Foundations of Modern Probability.}
2nd ed., Springer, New York, 2002. 


\bibitem[Kemperman(1950)]{Kemperman1}
Johannes Henricus Bernardus Kemperman,
\emph{The General One-Dimensional Random Walk with Absorbing Barriers
with Applications to Sequential Analysis}. Thesis.
Excelsiors Foto-Offset, The Hague, 1950.

\bibitem[Kemperman(1961)]{Kemperman2}
J. H. B. Kemperman, 
\emph{The Passage Problem for a Stationary Markov Chain}. 
University of Chicago Press, Chicago, IL (1961). 

\bibitem[Kennedy(1975)]{Kennedy}
Douglas P. Kennedy,
The Galton--Watson process conditioned on the total progeny.
\JAP \vol{12}  (1975),  800--806. 

\bibitem[Kesten(1986)]{Kesten}
Harry Kesten,  
Subdiffusive behavior of random walk on a random cluster. 
\emph{Ann. Inst. H. Poincar\'e Probab. Statist.} \vol{22} (1986), no. 4,
425--487. 

\bibitem[Kolchin(1984)]{Kolchin} 
Valentin F. Kolchin,  
\emph{Random Mappings}.
Nauka, Moscow, 1984 (Russian).
English transl.:
Optimization Software, New York, 1986.

\bibitem[Kudlaev(1988)]{Kudlaev}
E. M. Kudlaev,  
Decomposable statistics. (Russian.)
\emph{Itogi Nauki i Tekhniki}
\vol{26} (1988), 97--149.  
English transl.: 
\emph{J. Soviet Math.} \vol{50} (1990), no. 3, 1601--1642. 



\bibitem[Le Cam(1958)]{LeCam}
Lucien Le Cam, 
Un th\'eor\`eme sur la division d'un intervalle par des points pris au
hasard. 
\emph{Publ. Inst. Statist. Univ. Paris} 
\textbf7 (1958), no. 3/4, 7--16.

\bibitem[Leonov(1961)]{Leonov}
V. P.  Leonov, 
On the dispersion of time-dependent means of a stationary stochastic process.
(Russian.) 
\emph{Teor. Verojatnost. i Primenen.} \textbf6 (1961), 93--101;
English transl.:
\emph{Theory Probab. Appl.}
\textbf6 (1961), no. 1,  87--93

\bibitem[Lyons, Pemantle and Peres(1995)]{LPP}
Russell Lyons, Robin Pemantle \& Yuval Peres
Conceptual proofs of $L\log L$ criteria for mean behavior of branching
processes.
\emph{Ann. Probab.} \textbf{23}  (1995), no.\ 3, 1125--1138. 

\bibitem[Mansour(2011)]{Mansour}
 Toufik Mansour, 
Protected points in $k$-ary trees. 
\emph{Applied Mathematics Letters}
\vol{24} (2011), no. 4, 478--480.

\bibitem{Mazya}
Vladimir G.  Maz'ya \& Igor E. Verbitsky,
The Schr\"odinger operator on the energy space: boundedness and compactness
criteria. 
\emph{Acta Math.} \textbf{188} (2002), no. 2, 263--302. 

\bibitem{MM}
A. Meir \& J.W. Moon, 
On the altitude of nodes in random trees.
\emph{Canad. J. Math.} \vol{30} (1978), 997--1015.

\bibitem[Minami(2005)]{Minami}
Nariyuki Minami,
On the number of vertices with a given degree in a Galton--Watson tree.
\AAP \vol{37} (2005), no. 1, 229--264. 

\bibitem{NIST}
\emph{NIST Digital Library of Mathematical Functions}.
\url{http://dlmf.nist.gov/}

\bibitem[Otter(1949)]{Otter}
Richard Otter, 
The multiplicative process. 
\emph{Ann. Math. Statistics} \vol{20} (1949), 206--224.

\bibitem{Petrov}
Valentin V. Petrov,
\emph{Sums of Independent Random Variables}.
Springer-Verlag, Berlin, 1975.

\bibitem[Pitman(1998)]{Pitman:enum}  
Jim Pitman, 
Enumerations of trees and forests related to branching processes and
random walks. 
\emph{Microsurveys in Discrete Probability (Princeton, NJ, 1997)},
DIMACS Series in Discrete Mathematics and Theoretical Computer Science, 41, 
\AMS, Providence, RI, 1998, pp. 163--180.

\bibitem[Swensen(1983)]{Swensen}
Anders Rygh Swensen,
A note on convergence of distributions of conditional moments.
\emph{Scand. J. Statist.} \vol{10} (1983), no. 1, 41--44.

\bibitem[Wagner(2012)]{Wagner}
Stephan Wagner,
Additive tree functionals with small toll functions and subtrees of random
trees. 
\emph{Discrete Math. Theor. Comput. Sci. Proc.}, \textbf{AQ}
(2012), 67--80. 

\bibitem[Zabell(1974)]{ZabellPhD}
Sandy Lew Zabell,
\emph{A limit theorem for conditional expectations with
 applications to probability theory and statistical mechanics}. 
Ph.D. thesis, Harvard University, 1975. 


\bibitem[Zabell(1980)]{Zabell80}
Sandy L. Zabell,
Rates of convergence for conditional expectations. 
\emph{Ann. Probab.} \vol8 (1980), no. 5, 928--941.

\bibitem[Zabell(1993)]{Zabell}
S. L. Zabell,  
A limit theorem for expectations conditional on a sum. 
\emph{J. Theoret. Probab.} \vol6 (1993), no. 2, 267--283.


\end{thebibliography}
\end{document}